\begin{document}

\newtheorem{thm}{Theorem}[section]
\newtheorem{lm}[thm]{Lemma}
\newtheorem{dfn}[thm]{Definition}
\newtheorem{prop}[thm]{Proposition}
\newtheorem{cor}[thm]{Corollary}
\newtheorem{eg}[thm]{Example}

\newcommand{\F}{\mathbf{F}}
\newcommand{\N}{\mathbf{N}}
\newcommand{\R}{\mathbf{R}}
\newcommand{\C}{\mathbf{C}}
\newcommand{\Z}{\mathbf{Z}}
\newcommand{\Q}{\mathbf{Q}}
\newcommand{\A}{\mathbf{A}}
\newcommand{\T}{\mathbf{T}}
\newcommand{\K}{\mathbf{K}}
\newcommand{\LL}{\mathcal{L}}

\newcommand{\G}{\text{G}}
\newcommand{\re}{\text{Re}}
\newcommand{\im}{\text{Im}}
\newcommand{\gal}{\text{Gal}}
\newcommand{\ke}{\text{Ker}}
\newcommand{\ma}{\text{Max}}
\newcommand{\Spec}{\text{Spec}}
\newcommand{\Pic}{\text{Pic}}
\newcommand{\ord}{\text{ord}}
\newcommand{\app}{\thickapprox}
\newcommand{\deh}{H_\mca{D}}
\newcommand{\moh}{H_\mca{M}}
\newcommand{\ab}{\text{ab}}
\newcommand{\Mp}{\text{Mp}}
\newcommand{\Sp}{\text{Sp}}
\newcommand{\GL}{\text{GL}}
\newcommand{\PGL}{\text{PGL}}
\newcommand{\SL}{\text{SL}}
\newcommand{\Ind}{\text{Ind}}
\newcommand{\Res}{\text{Res}}

\newcommand{\msc}[1]{\mathscr{#1}}
\newcommand{\mfr}[1]{\mathfrak{#1}}
\newcommand{\mca}[1]{\mathcal{#1}}
\newcommand{\mbf}[1]{\mathbbm{#1}}

\newcommand{\cu}[1]{\textsc{\underline{#1}}}
\newcommand{\set}[1]{\bigl\{#1\bigr\}}
\newcommand{\wt}[1]{\overline{#1}}
\newcommand{\angb}[2]{\big\langle #1, #2 \big\rangle}
\newcommand{\exseq}[3]{1\longrightarrow #1 \longrightarrow #2 \longrightarrow #3 \longrightarrow 1}

\title{The Residual Spectrum of $\Mp_4(\A_k)$}
\author{Fan Gao}
\address{Department of Mathematics, National University of Singapore, 10 Lower Kent Ridge Road, Singapore, 119076}
\email{gaofan@nus.edu.sg}
\date{}
\subjclass[2010]{Primary 11F70}
\keywords{metaplectic groups, residual spectrum, Eisenstein series, Arthur conjecture}
\maketitle

\begin{abstract}
We compute the residual spectrum of the global metaplectic group $\Mp_4(\A_k)$, by using the theory of Eisenstein series. The residual spectra obtained are interpreted as near equivalence classes in the framework of the Arthur conjecture.
\end{abstract}

\section{Introduction}
Let $\Sp_4$ be the symplectic group of rank 2 defined over a number field $k$ with $\A_k$ its adele ring. The global metaplectic group $\Mp_4(\A_k)$ is a degree two central covering of $\Sp_4(\A_k)$. By the general theory of Eisenstein series extended to covering groups (cf. \cite{MoW}), the Hilbert space $L^2_\text{gen}(\Sp_4(k)\backslash\Mp_4(\A_k))$ of genuine functions has a spectral decomposition
\begin{align*}
L^2_\text{gen}(\Sp_4(k)\backslash \Mp_4(\A_k)) &= L^2_\text{dis} \oplus L^2_\text{cont} \\
&= L^2_\text{cusp} \oplus L^2_\text{res} \oplus L^2_\text{cont},
\end{align*}

\noindent where $L^2_\text{dis}$ is the closed invariant space spanned by irreducible subrepresentations of $L^2(\Sp_4(k)\backslash \Mp_4(\A_k))$, and it consists of the cuspidal part $L^2_\text{cusp}$ and the residue part $L^2_\text{res}$.

The residual spectrum $L^2_\text{res}(\Sp_4(k)\backslash \Mp_4(\A_k))$ can be further decomposed into
$$L^2_\text{res}(\Sp_4(k)\backslash \Mp_4(\A_k))=L^2_\text{d}(\wt{P}_1) \oplus L^2_\text{d}(\wt{P}_2) \oplus L^2_\text{d}(\wt{B}),$$
where $\wt{P_i}, i=1, 2$ are the maximal parabolic subgroups of $\Mp_4(\A_k)$ associated with the Siegel and non-Siegel maximal parabolic $P_i, i=1,2$ of $\Sp_4$ respectively, while $\wt{B}$ with the Borel subgroup $B$ of $\Sp_4$. The three components in the decomposition together contribute exhaustively to the non-cuspidal discrete spectrum, the so-called residual spectrum. They are obtained by taking the residues of Eisenstein series associated to cuspidal representations of the Levi factors of the parabolic subgroups $\wt{P}_1, \wt{P}_2, \wt{B}$ respectively.

Therefore a study of the residual spectrum inevitably involves an investigation of the poles of Eisenstein series, which in fact coincide with those of the constant terms. The latter can be expressed as sums of intertwining operators. The analysis of these intertwining operators thus leads to certain global $L$-functions associated to the aforementioned cuspidal representations, which in turn gives precise condition for the existence of poles of Eisenstein series.

The purpose of this paper is to explicate the process of analysis and computations and to determine $L^2_\text{res}$ for $\Mp_4$. For classical linear groups $\Sp_4$ and $\G_2$ such results have been obtained by H. Kim (cf. \cite{Kim1}, \cite{Kim2}) and S. Zampera (cf. \cite{Zam}). The interpretation in the framework of Arthur conjecture, for the $\G_2$ case for instance, has appeared in \cite{GGJ}. General references on the theory of Eisenstein series include \cite{MoW}, \cite{Lan}.\\

\textbf{Acknowledgment.} We are much indebted to Professor Wee Teck Gan for his guidance and numerous discussions on various topics.

\section{Preliminaries}
\subsection{Local Metaplectic Groups and Parabolic Subgroups}

In view of the global consideration later, we fix a number field $k$ henceforth as our base field. Let $W$ be a 4-dimensional symplectic vector space over $k$. We obtain the associated symplectic group $\Sp_4$ over $k$. Consider the groups $\Sp_4(k)$ and $\Sp_4(k_v)$ for any place $v$ of $k$, finite or infinite. In particular, when $k_v\ne \C$, the group $\Sp_4(k_v)$ has a unique central extension group $\Mp_4(k_v)$:
$$\exseq{\mu_2}{\Mp_4(k_v)}{\Sp_4(k_v)}.$$

One can write as sets
$$\Mp_4(k_v)=\Sp_4(k_v)\times \set{\pm 1}$$ with group operation on the right given by
$$(g_1, \epsilon_1)(g_2, \epsilon_2)=(g_1g_2,\epsilon_1\epsilon_2 c(g_1,g_2)),$$
for some 2-cocycle on $\Sp_4(k_v)$ with values in $\mu_2=\set{\pm 1}$. The more precise description of the cocycle can be found in \cite{Kud} and \cite{Rao}.

For any subset $H\subseteq \Sp_4(k_v)$, denote by $\wt{H}$ its preimage in $\Mp_4(k_v)$, which may or may not be a split covering. For example, let $B=TU$ be the Borel subgroup in $\Sp_4(k_v)$ with maximal split torus $T$:
$$T=\set{t(a,b)=\text{diag}(a,b,b^{-1},a^{-1})}.$$
Then the metaplectic covering splits over $U$ by the trivial section, and we have $\wt{U}=U\times \mu_2$ as groups. When the characteristic of the residue field of $k_v$ is odd, the covering also splits over the maximal compact subgroup $\Sp_4(\mfr{O}_v)$. Over the torus $T$, the covering map does not necessarily split, but still one knows that $\wt{T}$ is abelian.

%[ADD DETAILS FOR THE SPLITTING FOR COMPUTATIONS LATER?]

Before we proceed, some notations are to be set up first. Let $\alpha_1, \alpha_2$ be the short and long simple root for the algebraic group $\Sp_4$, denote $\alpha_3=\alpha_1+\alpha_2, \alpha_4=2\alpha_1+ \alpha_2$ through this paper. Associated to the short and long root are the fundamental weights $\beta_1=\alpha_4/2$ and $\beta_2=\alpha_3$. The Siegel and non-Siegel maximal parabolic $P_1, P_2$ are generated by $\alpha_1$ and $\alpha_2$, with Levi-factors $\GL_2$ and $\GL_1\times \SL_2$ respectively. The Borel subgroup $B=TU$ is contained in both.

As usual, we use $w_i, i=1,2$ to represent the simple reflection in the Weyl group associated to $\alpha_i$. We choose representatives respectively for $w_1$ and $w_2$ as
$$
w_1=\left( \begin{array}{cccc}
 0 & 1 & &\\
 1 & 0 & &\\
 & & 0& 1\\
 & & 1 &0
\end{array} \right), \quad
w_2=\left( \begin{array}{cccc}
 1 & & &\\
 & 0& 1 &\\
 & -1 &0 &\\
 & &  & 1
\end{array} \right).
$$
The blank entries are understood to be filled by zeros. For any place $v$, we can view $w_i$ as an element in $\Sp_4(\mfr{O}_v)$. If $k_v$ is of odd residual characteristic, the aforementioned splitting of $\Sp_4(\mfr{O}_v)$ gives $(w_i, 1)$ as the splitting of the representative $w_i$ (cf. \cite{Szp} sect. 2.3). In this case we can write $w_i$ for $(w_i, 1)$ without introducing any ambiguity.

For convenience and later reference, we list in the table the reduced decomposition of Weyl group elements and their actions on positive roots and the torus. Throughout the paper, we will use the abbreviation $w_{ijij}$ for $w_iw_jw_jw_j$ for example, i.e. $w_{12}$ for $w_1w_2$ and $w_{121}$ for $w_1w_2w_1$.

\begin{table}
\begin{multicols}{2}
\qquad

\caption{Decomposition of Weyl elements and the actions on torus}

    \begin{tabular}{ | p{0.7in} | r | r | r | r | p{1in} |}
    \hline
    $w$ & $\alpha_1$ & $\alpha_2$ & $\alpha_3$ & $\alpha_4$ & $w^{-1}t(a,b)w$ \\ \hline
    $1$ & $\alpha_1$ & $\alpha_2$ & $\alpha_3$ & $\alpha_4$ & $t(a,b)$ \\ \hline
    $w_1$ & $-\alpha_1$ & $\alpha_4$ & $\alpha_3$ & $\alpha_2$ & $t(b,a)$ \\ \hline
    $w_2$ & $\alpha_3$ & $-\alpha_2$ & $\alpha_1$ & $\alpha_4$ & $t(a,b^{-1})$ \\ \hline
    $w_{12}$ & $\alpha_3$ & $-\alpha_4$ & $-\alpha_1$ & $\alpha_2$ & $t(b^{-1},a)$ \\ \hline
    $w_{21}$ & $-\alpha_3$ & $\alpha_4$ & $\alpha_1$ & $-\alpha_2$ & $t(b,a^{-1})$ \\ \hline
    $w_{121}$ & $-\alpha_3$ & $\alpha_2$ & $-\alpha_1$ & $-\alpha_4$ & $t(a^{-1},b)$ \\\hline
    $w_{212}$ & $\alpha_1$ & $-\alpha_4$ & $-\alpha_3$ & $-\alpha_2$ & $t(b^{-1},a^{-1})$ \\ \hline
    $w_{1212}$ & $-\alpha_1$ & $-\alpha_2$ & $-\alpha_3$ & $-\alpha_4$ & $t(a^{-1},b^{-1})$ \\
    \hline
    \end{tabular}

\goodbreak

\setlength{\unitlength}{1cm}
\begin{picture}(4.7,4.7)(-0.5,0.5)
\put(2,1){\vector(1,0){2}}
\put(2,1){\vector(1,1){2}}
\put(2,1){\vector(0,1){2}}
\put(2,1){\vector(-1,1){2}}
\put(4.1,1){$\alpha_1$}
\put(0,3.1){$\alpha_2$}
\put(1.9,3.1){$\alpha_3=\beta_2$}
\put(4.1,3){$\alpha_4$}

\put(2,1){\vector(1,1){1}}
\put(3.1,1.8){$\beta_1$}
\end{picture}
\end{multicols}
\end{table}

\hspace{0.1cm}

Naturally, the local parabolic subgroups $\wt{P}_1(k_v), \wt{P}_2(k_v), \wt{B}(k_v)$ are defined to be the preimage of $P_1(k_v), P_2(k_v)$ and $B(k_v)$ under the projection $\Mp_4(k_v) \to \Sp_4(k_v)$. If $P(k_v)=M(k_v)N(k_v)$ is a parabolic subgroup of $\Sp(k_v)$, then $\wt{P}(k_v)=\wt{M}(k_v)N(k_v)$, since the unipotent radical splits trivially. We call $\wt{M}(k_v)$ the Levi of the parabolic subgroup $\wt{P}(k_v)$ in $\Mp_4(k_v)$. For example,
$$\wt{M}_1(k_v)=\wt{\GL}_2(k_v) \text{ and } \wt{M}_2(k_v)=(\wt{\GL}_1(k_v) \times \Mp_2(k_v))/\Delta \mu_2,$$
where $\Delta$ stands for the diagonal imbedding of $\mu_2$.

\subsection{Local Representations and Parabolic Induction}

In order to define parabolic induction, one needs to understand the genuine representations of $\wt{T}(k_v), \wt{M}_1(k_v)$ and $\wt{M}_2(k_v)$. In fact, some observation shows that it suffices to understand those of $\wt{\GL}_n(k_v), n=1,2$ and $\Mp_2(k_v)$.

With a view of the global situation, we fix an additive character $\psi=\prod \psi_v$ of $\A_k$ throughout. For an arbitrary place $v$ with the additive character $\psi_v: k_v \to \C^\times$, there is a natural genuine character $\wt{\chi}_{\psi_v}$ of $\wt{\GL}_n(k_v)$ derived from the Weil's factor (cf. \cite{Wei}), such that the map
$$\tau \mapsto \wt{\tau}_v=\tau\otimes \wt{\chi}_{\psi_v}$$
gives a bijection between Irr($\GL_n(k_v)$) and the set Irr($\wt{\GL}_n(k_v)$) of genuine irreducible representations of $\wt{\GL}_n(k_v)$. Note that such bijection depends on the choice of the additive character.

For Irr($\Mp_2(k_v)$), we will refrain from giving the details but refer to \cite{GaS} for descriptions.

%[ADD UNRAMIFIED LOCAL REPRESENTATIONS]

Based on this, we can now introduce parabolic inductions from $\wt{P}_1(k_v), \wt{P}_2(k_v)$ or $\wt{B}(k_v)$. For example, we consider the case of $\wt{P}_2(k_v)$. Let $\chi_v \in$ Irr($\GL_1(k_v)$) and $\sigma_v \in $ Irr($\Mp_2(k_v)$), then $\wt{\chi}_v \boxtimes \sigma$ is a representation of $\wt{\GL}_1(k_v)\times \wt{\Mp}_2(k_v)$ which descends to be in Irr($\wt{M}_2(k_v)$). Therefore, one may define the normalized parabolic induction

$$I_{\wt{P}_2}(\wt{\chi}_v\boxtimes \sigma_v) =\Ind_{\wt{P}_2}^{\Mp_4} \wt{\chi}_v \boxtimes \sigma_v.$$

\subsection{Rank One Intertwining Operator and Coefficients}
We collect some facts and computational results on local intertwining operators and the coefficients which appear in the unramified case. Let $\psi_v$ be the additive character given above, and let $\chi_v, \mu_v$ be two characters of $\GL_1(k_v)$. We assume that these characters are all unramified, and consider the induced representation
$$I_{\wt{B}}(\wt{\chi}_v\boxtimes\wt{\mu}_v)=\Ind_{\wt{B}}^{\Mp_4}\wt{\chi}_v\boxtimes \wt{\mu}_v.$$

For the two Weyl group elements $w_i$ associated to $\alpha_i, i=1,2$, we will compute the intertwining map
$$A(\wt{\chi}_v\boxtimes \wt{\mu}_v, w_i): I_{\wt{B}}(\wt{\chi}_v\boxtimes\wt{\mu}_v)\longrightarrow I_{\wt{B}}(w_i(\wt{\chi}_v\boxtimes\wt{\mu}_v)).$$

Recall that by abuse of notation, we use $w_i\in \Sp_4(\mfr{O}_v)$ to represent the element $(w_i,1)$ in $\Mp_4(k_v)$, though in general the splitting over $\Sp_4(\mfr{O}_v)$ is not trivial. The intertwining operator here labels essentially a rank one computation, and it is necessary to differentiate the case of short and long root, i.e. $w_1$ or $w_2$. For the short root $\alpha_1$, we have
\begin{diagram}
1 &\rTo &\mu_2  &\rTo   &\Mp_4(k_v)   &\rTo     &\Sp_4(k_v)   &\rTo    &1  \\
  &     &\uTo^=   &       &\uInto &\luInto^{g\mapsto (g, 1)}  &\uInto_{\phi_{\alpha_1}} \\
1 &\rTo &\mu_2  &\rTo   &\SL_2(k_v)\times \mu_2   &\rTo     &\SL_2(k_v)   &\rTo    &1,
\end{diagram}
where $\phi_{\alpha_1}$ is the embedding given by $\alpha_1$. That is to say, the top covering map splits over $\phi_{\alpha_1}(\SL_2(k_v))$ by the trivial section (cf. \cite{Rao}).

However, for the long root $\alpha_2$, the $\SL_2(k_v)$ embedded by $\alpha_2$ inherits a nontrivial degree two covering $\Mp_2(k_v)$. That is
\begin{diagram}
1 &\rTo &\mu_2  &\rTo   &\Mp_4(k_v)   &\rTo     &\Sp_4(k_v)   &\rTo    &1  \\
  &     &\uTo^=   &       &\uInto       &  &\uInto_{\phi_{\alpha_2}} \\
1 &\rTo &\mu_2  &\rTo   &\Mp_2(k_v)   &\rTo     &\SL_2(k_v)   &\rTo    &1.
\end{diagram}

To compute the local coefficients of intertwining operators for unramified representations, it reduces to $\SL_2$ or $\Mp_2$ case. Though the first case is familiar, we recall both to illustrate the difference. In this paper, we will use $B_o$ to denote the Borel subgroup for either $\SL_2$ or $\GL_2$, in which case the nontrivial Weyl element is written as $w$. No confusion will arise from the context.

First, let $f_v\in I_{B_o}^{\SL_2}(|\ |^s\mu_v)$ be a normalized $\Sp_2(\mfr{O}_v)$-fixed vector, and let $f_v^\prime \in I_{B_o}^{\SL_2}(w(|\ |^s\mu_v))$ be the corresponding vector. Then in the $\SL_2$ case the intertwining operator $A(s, \mu_v)$ gives
$$A(s, \mu_v)f_v=\frac{L(s,\mu_v)}{L(s+1,\mu_v)} f_v^\prime.$$

However, suppose $f_v\in I_{\wt{B}_o}^{\Mp_2}(|\ |^s\wt{\mu}_v)$ is a normalized $\Sp_2(\mfr{O}_v)$-fixed vector, and let $f_v^\prime \in I_{\wt{B}_o}^{\Mp_2}(w(|\ |^s\wt{\mu}_v))$ be the corresponding vector. Then the intertwining operator $A(s, \wt{\mu}_v)$, for the $\Mp_2$ case, gives
$$A(s, \wt{\mu}_v)f_v=\frac{L(2s,\mu_v^2)}{L(2s+1,\mu_v^2)} f_v^\prime.$$

We refer to \cite{Szp} sect. 8 for the details of the computation.

\subsection{Global Metaplectic Group}
Keep notations as above, we give some basics on our underlying object of interest: the global metaplectic group $\Mp_4(\A_k)$. Fix the number field $k$, let $\Mp_4(k_v)$ be the local double cover of $\Sp_4(k_v)$ given before, consider the restricted direct product $\prod_v^\prime\Mp_4(k_v)$ with respect to the family of split open compact subgroup $\Sp_4(\mfr{O}_v)$. This is a central extension of $\Sp_4(\A_k)$ by the compact group $\prod_v \mu_{2,v}$. Let
$$Z_o=\set{(\epsilon_v)\in \prod_v\mu_{2,v}: \prod_v \epsilon_v=1}$$
and set $$\Mp_4(\A_k)=\Bigl(\prod_v^\prime\Mp_4(k_v) \Bigr)/Z_o.$$
Thus we obtain the global metaplectic group which splits over $\Sp_4(k)$ from the exact sequence
$$\exseq{\mu_2}{\Mp_4(\A_k)}{\Sp_4(\A_k)}.$$

Following this, we may define the space $\msc{A}(\Mp_4(\A_k))$ of genuine automorphic forms on $\Sp_4(k)\backslash \Mp_4(\A_k)$. Note that an irreducible genuine representation of $\Mp_4(\A_k)$ is of the form
$$\eta=\otimes \eta_v,$$
where $\eta_v$ is an irreducible genuine representation of $\Mp_4(k_v)$. In particular, every irreducible automorphic representation is of this form. As alluded in the introduction, this paper will focus on the square integrable genuine forms
$$L^2_\text{gen}(\Sp_4(k)\backslash \Mp_4(\A_k)),$$
and in fact only its residual spectrum $L^2_\text{res}$ obtained by taking residues of Eisenstein series. We refer to \cite{MoW} and \cite{GGP} for discussions of metaplectic forms on $\Mp_4(\A_k)$ and some conjectures on $L_\text{disc}^2$ in the spirit of Arthur conjectures for classical groups (\cite{Art}) respectively.

%[WORDS ON METAPLECTIC FORMS ON $\wt{\GL}_n$ and $\Mp_2$?]

\section{Decomposition for the Siegel parabolic subgroup}
\subsection{Eisenstein Series on Metaplectic Groups}
For this section, we have the decomposition $P_1=M_1N_1$ for the Siegel parabolic $P_1$ inside $\Sp_4$, with the Levi factor $M_1\cong \GL_2$. Let $\mfr{a}^*_{P_1}=X(M_1)\otimes \R =\R\beta_2$ and $\rho_{P_1}=\frac{3}{2}\beta_2$ be the half sum of roots generating $N_1$. In fact, we can view $\beta_2$ as the determinant map on $M_1$, and will identify $s\in \C$ with $s\beta_2$.

Let $\wt{\tau}=\otimes_v \wt{\tau}_v$ be a cuspidal representation of $\wt{M}_1(\A_k)=\wt{\GL}_2(\A_k)$, where $\tau=\otimes_v \tau_v$ is a cuspidal representation of $\GL_2(\A_k)$. The local component $\wt{\tau}_v$ is described as before. Let $\phi \in \Ind_{\wt{P}_1}^{\Mp_4}\wt{\tau}$ be an element in the representation space, identified with certain $\C$-valued function on $N_1(\A_k)M_1(k)\backslash \Mp_4(\A_k)$ (cf. \cite{MoW} II.1).

Define the Eisenstein series on $\Mp_4(\A_k)$ by

$$E(s,\wt{g},\phi,\wt{P}_1)=\sum_{\gamma\in P_1(k)\backslash\Sp_4(k)} \phi(\gamma \wt{g})\text{exp}\angb{s+\rho_{P_1}}{H_{\wt{P}_1}(\gamma\wt{g})},$$
where $H_{\wt{P}_1}(\wt{g})= H_{P_1}(g)$ is the Harish-Chandra homomorphism. For convenience, we may let $\Phi_s(\wt{g})=\phi(\wt{g})\text{exp}\angb{s+\rho_{P_1}}{H_{\wt{P}_1}(\wt{g})}$ and write $E(s,\wt{g},\Phi_s,\wt{P}_1)$ for $E(s,\wt{g},\phi,\wt{P}_1)$ sometimes. Note that for fixed $s$ the representation space generated by $\Phi_s$ is equivalent to $I_{\wt{P}_1}(s,\wt{\tau})$.

We refer to \cite{MoW} for general properties of the Eisenstein series. In particular, $E(s,\wt{g},\phi,\wt{P}_1)$ converges for Re$s\gg 0$ and extends to a meromorphic function of $s\in \C$. The poles of the Eisenstein series coincide with those of its constant term along $\wt{P}_1$, which is given by

$$E_{\wt{P}_1}(s,\wt{g},\phi,\wt{P}_1)=\sum_{w\in \Omega_1} M(s, \wt{\tau}, w) \Phi_s(\wt{g}),$$
where $\Omega_1=\set{1,w_{212}}$. The element $w_i\in \Sp_4(k)$ is considered to be in $\Mp_4(\A_k)$, since the group $\Sp_4(k)$ splits into the latter. We also write $n\in N_1(\A_k)$ for the splitting $(n,1)\in \Mp_4(\A_k)$.

More generally, for any Weyl group element $w$ and any $f\in I_{\wt{P}_1}(s,\wt{\tau})$, the representation space of $\Mp_4(\A_k)$ generated by $\Phi_s$, we consider the intertwining operator $M(s, \wt{\tau}, w)f(\wt{g})$. It is given by the integral
$$M(s, \wt{\tau}, w)f(\wt{g})=\int_{N^w_1(\A_k)}f(w^{-1}n\wt{g})dn,$$
which converges for Re$s\gg0$.

In general we define $N^w_1=U\cap w N_1^\prime w^{-1}$ and $N_1^\prime$ is the unipotent group opposite to $N_1$. Here in this section we will fix $w=w_{212}$, the nontrivial element in $\Omega_1$. In this case, we have $N_1^w=N_1$. It now follows from the definition of $I_{\wt{P}_1}(s,\wt{\tau})$ as a restricted tensor product that
$$M(s,\wt{\tau}, w)=\otimes A(s, \wt{\tau}_v, w), \text{ with } A(s, \wt{\tau}_v, w)f_v(\wt{g}_v)= \int_{N_1^w(k_v)}f_v(w^{-1}n_v\wt{g}_v)dn,$$
where $f=\otimes f_v$ and $f_v$ is the unique $k_v$-fixed function with normalization $f_v(e_v)=1$ for almost all $v$.

\subsection{Computation of Local Intertwining Operator}
By virtue of the tensor decomposition for $M(s,\wt{\tau}, w)$, we need to analyze both the local intertwining operator $A(s, \wt{\tau}_v, w)$ for the almost all unramified places $v$ and the other finitely many exceptions $v\in S$.

To facilitate subsequent computations for the non-Siegel and Borel case, it is helpful to obtain the explicit formulas, at least in the unramified case. Therefore we assume $\wt{\tau}_v$ is unramified, i.e. $\wt{\tau}_v \hookrightarrow I_{\wt{B}_o}^{\wt{\GL}_2}\wt{\chi}_v\boxtimes \wt{\mu}_v$ for unramified character $\chi_v, \mu_v$ of $\GL_1(k_v)$.
By inducing in stages, $I_{\wt{P}_1}(s,\wt{\tau}_v)\subseteq I_{\wt{B}}(s\beta_2,\wt{\chi}_v\boxtimes \wt{\mu}_v)$. Then $A(s, \wt{\tau}_v, w)= A(s\beta_2, \wt{\chi}_v\boxtimes \wt{\mu}_v, w)$ on their common domain. Consider the unramified vector $f_v\in I_{\wt{B}}(s\beta_2,\wt{\chi}_v\boxtimes \wt{\mu}_v)$ with
$$f_v(\wt{t}nq)=(\wt{\chi}_v\boxtimes \wt{\mu}_v)(\wt{t})\text{exp}\angb{s\beta_2+\rho_B}{H_{\wt{B}}(\wt{t})},$$
for $t\in \wt{T}(k_v), n\in N(k_v), q\in GL_2(\mfr{O}_v)$.

We need to calculate
$$A(s,\wt{\chi}_v\boxtimes \wt{\mu}_v, w) f_v(e)=\int_{N_1^w(k_v)}f_v(w^{-1}n)dn.$$

 The computation is reduced to the $\SL_2$ and $\Mp_2$ case by using the cycle relation of intertwining operators. Let $\Lambda\in \mfr{a}^*_\C:=\C\beta_1\oplus \C\beta_2, \chi_v\boxtimes \mu_v$ a character of $\wt{T}(k)$, let $A(\Lambda, \wt{\chi}_v\boxtimes \wt{\mu}_v,w)$ be the intertwining operator from $I_{\wt{B}}(\Lambda,\wt{\chi}_v\boxtimes \wt{\mu}_v)$ to $I(w\Lambda,w(\wt{\chi}_v\boxtimes \wt{\mu}_v))$. 

By using the result for $\SL_2$ and $\Mp_2$ intertwining operator of last section, we have the following
\begin{prop}\label{prop: main}
(Gindikin-Karpelevich formula) Let $\wt{\chi}_v\boxtimes \wt{\mu}_v$ be an unramified character of $\wt{T}\subseteq \Mp_4(k_v)$, and let $\Lambda\in \mfr{a}^*_\C$. Then
$$A(\Lambda, \wt{\chi}_v\boxtimes \wt{\mu}_v, w)f_v^o(e)=\prod_{\substack{\beta> 0, w\beta< 0 \\ \beta \text{ short}}} \frac{L(\angb{\Lambda}{\beta^\vee}, (\chi_v\boxtimes\mu_v) \beta^\vee)}{L(1+\angb{\Lambda}{\beta^\vee}, (\chi_v\boxtimes\mu_v) \beta^\vee)} \prod_{\substack{\beta> 0, w\beta< 0 \\ \beta \text{ long}}} \frac{L(2\angb{\Lambda}{\beta^\vee}, ((\chi_v\boxtimes\mu_v) \beta^\vee)^2)}{L(1+2\angb{\Lambda}{\beta^\vee}, ((\chi_v\boxtimes\mu_v) \beta^\vee)^2)}.$$
\end{prop}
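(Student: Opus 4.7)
The plan is to reduce the computation to a product of rank-one intertwining operators by choosing a reduced expression $w = w_{i_1}w_{i_2}\cdots w_{i_n}$ for $w$ in the Weyl group and applying the cocycle (multiplicativity) relation for intertwining operators. Explicitly, since each partial product $w_{i_k}\cdots w_{i_n}$ has length equal to the number of its factors, the length condition $\ell(w_{i_k}\cdots w_{i_n}) = \ell(w_{i_{k+1}}\cdots w_{i_n}) + 1$ is satisfied at every stage, so the associated unipotent radicals decompose compatibly and $A(\Lambda, \wt{\chi}_v\boxtimes\wt{\mu}_v, w)$ factors as the composition
\[
A(w_{i_2}\cdots w_{i_n}\Lambda, w_{i_2}\cdots w_{i_n}(\wt{\chi}_v\boxtimes\wt{\mu}_v), w_{i_1}) \circ \cdots \circ A(\Lambda, \wt{\chi}_v\boxtimes\wt{\mu}_v, w_{i_n}).
\]
The set $\set{\beta > 0 : w\beta < 0}$ is, by a standard Weyl-combinatorial fact, in bijection with $\set{1,\dots,n}$ via $k \mapsto \beta_k := w_{i_n}\cdots w_{i_{k+1}}\alpha_{i_k}$, and $\beta_k$ is short (resp.\ long) if and only if $\alpha_{i_k}$ is short (resp.\ long), since the Weyl group preserves root length.

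Next, I would evaluate each rank-one factor at an unramified vector. For the simple reflection $w_{i_k}$, the restriction of the principal series to the rank-one $\SL_2$ embedded via $\phi_{\alpha_{i_k}}$ is again a principal series, and the rank-one integral reduces to the corresponding $\SL_2$ or $\Mp_2$ computation recorded in sect.\ 2.3. If $\alpha_{i_k} = \alpha_1$ is short, the diagram shows $\phi_{\alpha_1}$ lifts trivially into $\Mp_4(k_v)$, so the rank-one embedded cover is the split $\SL_2(k_v)\times\mu_2$ and the $\SL_2$ formula yields the factor $L(s)/L(s+1)$ with $s = \angb{\Lambda_k}{\alpha_{i_k}^\vee}$ and character $(\chi_v\boxtimes\mu_v)\circ\alpha_{i_k}^\vee$, where $\Lambda_k = w_{i_{k+1}}\cdots w_{i_n}\Lambda$. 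If $\alpha_{i_k} = \alpha_2$ is long, the embedded cover is the genuine $\Mp_2(k_v)$, and the $\Mp_2$ formula yields the factor $L(2s, \cdot^2)/L(1+2s, \cdot^2)$. The pairing $\angb{\Lambda_k}{\alpha_{i_k}^\vee}$ equals $\angb{\Lambda}{(w_{i_{k+1}}\cdots w_{i_n})^{-1}\alpha_{i_k}^\vee} = \angb{\Lambda}{\beta_k^\vee}$, and similarly the character $(\chi_v\boxtimes\mu_v)\circ\alpha_{i_k}^\vee$ pulled back through the Weyl twist becomes $(\chi_v\boxtimes\mu_v)\circ\beta_k^\vee$, so each factor is exactly the $\beta_k$-term appearing in the proposed formula.

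Multiplying the rank-one factors in order and re-indexing by $\beta_k$ then yields the product formula; its independence of the reduced expression is automatic from the uniqueness of the intertwining operator on the left-hand side. The main obstacle is the careful bookkeeping in the metaplectic setting: one must verify that the cocycle decomposition above preserves genuine characters through the Weyl twists (so that each intermediate induced space is a \emph{genuine} principal series of $\Mp_4(k_v)$), and that the rank-one reduction correctly detects whether the covering of the embedded $\SL_2$ is split or not. Both points are controlled by the two diagrams for $\phi_{\alpha_1}$ and $\phi_{\alpha_2}$ given earlier, together with the fact that $\wt{T}$ is abelian and Weyl-action on unramified genuine characters is well-defined; the detailed verification is exactly the computation in \cite{Szp} sect.\ 8.
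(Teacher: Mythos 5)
Your proposal is correct and follows essentially the same route as the paper: reduce to rank-one intertwining operators via a reduced decomposition of $w$ and the cocycle relation, then apply the $\SL_2$/$\Mp_2$ rank-one formulas from sect.\ 2.3 according to whether the relevant simple root is short or long, and re-index using the standard bijection between $\set{\beta>0 : w\beta<0}$ and the factors of the reduced word. The paper only gives this as a sketch, citing \cite{Szp} sect.\ 8, and your fuller write-out of the bookkeeping is consistent with that sketch.
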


 Back to the Siegel parabolic case,  consider $w=w_{212}$ the nontrivial element in $\Omega_1$. We have $\Lambda=s\beta_2=s\alpha_3$. Substituting $\Lambda$ into the above formula and combining all local intertwining operators, the nontrivial term in the constant term of Eisenstein series along $\wt{P}_1$ is given by
$$M(s,\wt{\tau}, w_{212})f=\frac{L^S(2s,\tau,\text{Sym}^2)}{L^S(2s+1,\tau,\text{Sym}^2)}  \bigotimes_{v\notin S} f'_v \otimes \bigotimes_{v\in S} A(s,\wt{\tau}_v, w_{212}) f_v.$$

In order to determine the pole of $M(s, \wt{\tau}, w_{212})$, we need to analyze the partial $L$-functions $L^S$ and the local intertwining maps for the exceptional places. Instead, we first consider the complete $L(2s,\tau,\text{Sym}^2)$ with its local factors given by Shahidi, it is well-known that the symmetric square $L$-function has a pole of order 1 if and only if
$$\tau\cong \tau^\vee \text{ with a nontrivial quadratic central character }\omega_\tau,$$
where the $\tau^\vee$ stands for the contragradient of $\tau$.

Second, we carry out the analysis of $A(s, \wt{\tau}_v, w_{212}), v\in S$. We make the normalization 
$$r(s, \wt{\tau}_v, w_{212})=\frac{L(2s,\tau_v,\text{Sym}^2)}{L(2s+1,\tau_v,\text{Sym}^2)\varepsilon(2s,\tau_v,\text{Sym}^2)}, \quad A(s, \wt{\tau}_v, w_{212})=r(s, \wt{\tau}_v, w_{212}) R(s, \wt{\tau}_v, w_{212}).$$

\begin{lm}\label{lm: siegel, local operator holomorphic}
For each $v\in S$, $R(s, \wt{\tau}_v, w_{212})$ can be continued to a holomorphic function for $Re(s)\ge 0$.
\end{lm}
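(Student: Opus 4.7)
The plan is to reduce the claim to rank-one computations for $\SL_2$ and $\Mp_2$, then invoke the results of Section 2.3 together with the standard theory of normalized intertwining operators. First, by the Langlands classification, I would write $\wt{\tau}_v$ as a subquotient of a parabolically induced representation of $\wt{\GL}_2$; when $\tau_v$ is a principal series this gives an embedding $\wt{\tau}_v \hookrightarrow I_{\wt{B}_o}^{\wt{\GL}_2}(\wt{\chi}_v \boxtimes \wt{\mu}_v)$, and by induction in stages, $A(s, \wt{\tau}_v, w_{212})$ is realized as the restriction of the principal series intertwining operator $A(s\beta_2, \wt{\chi}_v \boxtimes \wt{\mu}_v, w_{212})$. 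It therefore suffices to establish the lemma for this principal series operator after the same normalization.

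Next, using $w_{212} = w_2 w_1 w_2$ and the cocycle relation, I would decompose the principal series operator as a composition of three rank-one operators, alternately of $\Mp_2$-type (for the $w_2$-factors, long root $\alpha_2$) and $\SL_2$-type (for the $w_1$-factor, short root $\alpha_1$). By Harish-Chandra's theory in the $\SL_2$ case and its metaplectic analog recalled in Section 2.3 (cf. \cite{Szp}), each such rank-one factor becomes holomorphic on $\text{Re}(s) \ge 0$ once normalized by the appropriate local $L$-ratio, respectively $L(\cdot,\chi)/L(\cdot+1,\chi)$ or $L(2\cdot,\mu^2)/L(2\cdot+1,\mu^2)$. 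The product of these three $L$-ratios, combined with the principal-series identity $L(s, \tau_v, \text{Sym}^2) = L(s, \chi_v^2) L(s, \chi_v\mu_v) L(s, \mu_v^2)$, reproduces precisely the factor $L(2s,\tau_v,\text{Sym}^2)/L(2s+1,\tau_v,\text{Sym}^2)$, which matches $r(s,\wt{\tau}_v, w_{212})$ up to the entire and non-vanishing $\varepsilon$-factor. Thus the normalized composition is the restriction of $R(s,\wt{\tau}_v,w_{212})$.

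The main obstacle is extending the cancellation to the critical line $\text{Re}(s) = 0$, and in particular to $s = 0$, where $I_{\wt{P}_1}(s, \wt{\tau}_v)$ can be reducible and the unnormalized operator may exhibit residual poles. This is handled by the Langlands-Shahidi normalization principle: the chosen $L$-factor ratio is designed precisely to absorb all singular behavior of the unnormalized rank-one operators in the closed positive chamber, so that each rank-one normalized factor is holomorphic there; holomorphy then persists under composition. For non-principal-series $\tau_v$ at places $v \in S$ (supercuspidal or twisted Steinberg), one applies the same argument after first writing $\wt{\tau}_v$ as a subquotient of an induced representation from a tempered datum on a proper Levi, with the local $L$- and $\varepsilon$-factors of the symmetric square defined via the local Langlands correspondence for $\GL_2$.
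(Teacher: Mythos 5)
Your overall strategy --- embed into a Borel principal series, factor $w_{212}=w_2w_1w_2$ via the cocycle relation, and compare the product of the three rank-one $L$-ratios with $r(s,\wt{\tau}_v,w_{212})$ --- is exactly the skeleton of the paper's argument for the complementary-series case, and your identity $L(s,\tau_v,\mathrm{Sym}^2)=L(s,\chi_v^2)L(s,\chi_v\mu_v)L(s,\mu_v^2)$ correctly reproduces the paper's factorization $L(2s-2a,\nu^2)L(2s,\nu^2)L(2s+2a,\nu^2)$. However, there are two places where the proposal glosses over a genuine step.

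First, the supercuspidal case is not covered by your reduction. You propose to handle ``non-principal-series $\tau_v$'' by writing $\wt{\tau}_v$ as a subquotient of an induced representation from a tempered datum on a proper Levi; but for a supercuspidal $\tau_v$ on $\GL_2$ there is no proper Levi, so no such decomposition exists and the rank-one factorization of the $\Mp_4$-intertwining operator is simply unavailable. The paper treats the tempered case by a different argument entirely: it quotes Shahidi for the fact that $L(2s,\tau_v,\mathrm{Sym}^2)$ has no pole or zero on $\mathrm{Re}(s)\ge 0$ when $\tau_v$ is tempered, and Borel--Wallach and Silberger for the holomorphy of the unnormalized operator $A(s,\wt{\tau}_v,w_{212})$ on the closed half-plane. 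You need this direct input; it cannot be recovered from rank-one reductions.

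Second, in the complementary-series case your assertion that ``each rank-one normalized factor is holomorphic in the closed positive chamber'' is not a priori true at the variable you mean. The first $\Mp_2$-type operator $A(w_2)$ is evaluated at the \emph{shifted} parameter $s'=s-a$, so on $\mathrm{Re}(s)=0$ you are sitting at $\mathrm{Re}(s')=-a<0$, outside the closed positive chamber; the normalized rank-one operator is holomorphic there only because the exponent $a$ is small. The paper makes this quantitative by invoking the Kim--Shahidi bound $a\in(0,1/9)$ (even the unitarity bound $a<1/2$ would suffice here), and then tracks the poles of the three \emph{unnormalized} rank-one operators (at $s=a$, $s=0$, and $s=-a$, the last of which is outside $\mathrm{Re}(s)\ge 0$) against the poles of $L(2s,\tau_v,\mathrm{Sym}^2)$, showing the cancellation explicitly. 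Without the bound on $a$ and without specifying in which variable the rank-one normalizations achieve holomorphy, the appeal to a ``Langlands--Shahidi normalization principle'' is circular: it asserts the conclusion of the lemma for each factor without verifying it at the shifted parameters that actually occur.
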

\begin{proof}  For Re$(s)\ge 0$, it suffices to show that the poles of $r(s, \wt{\tau}_v, w_{212})$ and $A(s, \wt{\tau}_v, w_{212})$ cancel each other.

If $\tau_v$ is tempered, \cite{Sha} gives that the factor $L(2s,\tau_v,\text{Sym}^2)$ and therefore $r(s, \wt{\tau}_v, w_{212})$ has no pole or zero for Re$s\ge 0$. Also in this case, the holomorphicity of $A(s, \wt{\tau}_v, w_{212})$ follows from \cite{BoW} section IV. 4 and \cite{Sil} section 5.4, with slight modification of the argument. Therefore we assume that $\tau_v$ is a complementary series, and without loss of generality of the form $\tau_v=\Ind_B^{\GL_2}\nu|\ |^a\times \nu|\ |^{-a}$, where $\nu$ is a unitary character and $a\in \R^+$. In fact, we know $a\in (0,1/9)$ by \cite{KiS}.

By the Langlands-Shahidi method and the multiplicativity of $\gamma$-factor which is used to define $L(2s,\tau_v,\text{Sym}^2)$ (cf. \cite{Sha}), we see that
$$L(2s,\tau_v,\text{Sym}^2)=L(2s-2a,\nu^2)L(2s,\nu^2)L(2s+2a,\nu^2).$$
Note that the notation we use for the local factor could be unifying, both for archimedean and non-archimedean places. However, for elaboration purpose, we give details separately. Clearly for Re$(s)\ge 0$, $L(2s+1,\tau_v,\text{Sym}^2)$ has no pole or zero, therefore it is reduced to show that the poles of $L(2s,\tau_v,\text{Sym}^2)$ and $A(s, \wt{\tau}_v, w_{212})$ cancel each other.

For non-archimedean place $v$, it follows from above formula that for Re$(s)\ge 0$, the pole of $L(2s,\tau_v,\text{sym}^2)$ occurs only at $s=a$ for $\nu^2=\mbf{1}$, or $s=0$ for $\nu^2=\mbf{1}$. On the other hand, consider $A(s, \wt{\tau}_v, w_{212}): I_{\wt{B}}(\wt{\nu}|\ |^{a+s}\boxtimes \wt{\nu}|\ |^{-a+s})\longrightarrow I_{\wt{B}}(\wt{\nu}^{-1}|\ |^{a-s}\boxtimes \wt{\nu}^{-1}|\ |^{-a-s})$. It factorizes as
\begin{diagram}
I_{\wt{B}}(\wt{\nu}|\ |^{a+s}\boxtimes \wt{\nu}|\ |^{-a+s}) &\rTo^{A(w_2)} &I_{\wt{B}}(\wt{\nu}|\ |^{a+s}\boxtimes \wt{\nu}^{-1}|\ |^{a-s}) &\rTo^{A(w_1)} &I_{\wt{B}}(\wt{\nu}^{-1}|\ |^{a-s}\boxtimes \wt{\nu}|\ |^{a+s}) &\rTo^{A(w_2)} &I_{\wt{B}}(\wt{\nu}^{-1}|\ |^{a-s}\boxtimes \wt{\nu}^{-1}|\ |^{-a-s}).
\end{diagram}
The first and third operators are both $\Mp_2$ rank-one operator, while the middle is a $\GL_2$ intertwining operator. The first operator $A(w_2)$ has a simple pole at $s=a$ when $\nu^2=\mbf{1}$, the second operator at $s=0$ when $\nu^2=\mbf{1}$, and the third operator has a simple pole at $s=-a$ when $\nu^2=\mbf{1}$. Therefore for Re$s\ge 0$, the only pole for $A(s, \wt{\tau}_v, w)$ occurs at $s=a$ when $\nu^2=\mbf{1}$, or $s=0$ when $\nu^2=\mbf{1}$.

For archimedean place $v$ real say, the character $\nu$ is either trivial or the sign character. The local $L$ factor in terms of Gamma function reads
$$L_\infty(2s,\tau_v, \text{Sym}^2)=\pi^{-3s}\Gamma(s-a)\Gamma(s)\Gamma(s+a).$$
In either case we see the only possible pole of the $L$-factor occurs at $s=a$ for $\nu^2=\mbf{1}$ or $s=0$ for $\nu^2=\mbf{1}$, which agrees with that of the operator $A(s, \wt{\tau}_v, w_{212})$. The complex case follows from similar formula as the real case. The proof is completed.

\end{proof}

To determine the image of the normalized local intertwining operator we have
\begin{lm}\label{lm: image irred}
For each $v$, the image of $R(1/2, \wt{\tau}_v, w_{212})$ is irreducible and nonzero for all place $v$ of $k$.
\end{lm}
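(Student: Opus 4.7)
The plan is to establish non-vanishing and irreducibility by separating into the tempered and complementary-series cases for $\tau_v$, exactly as in the proof of Lemma~\ref{lm: siegel, local operator holomorphic}. In each case the strategy is to identify the image of $R(1/2,\wt{\tau}_v,w_{212})$ with the Langlands quotient of $I_{\wt{P}_1}(1/2,\wt{\tau}_v)$, whose irreducibility is supplied by the Langlands classification for $\Mp_4(k_v)$.

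In the tempered case, $I_{\wt{P}_1}(1/2,\wt{\tau}_v)$ has a unique irreducible quotient, namely the Langlands quotient attached to the data $(\wt{P}_1,\wt{\tau}_v,1/2)$, and this quotient is precisely the image of the standard intertwining operator $A(1/2,\wt{\tau}_v,w_{212})$. The previous lemma already shows that the normalizing factor $r(s,\wt{\tau}_v,w_{212})$ is both holomorphic and non-zero on $\text{Re}(s)\ge 0$ when $\tau_v$ is tempered, so $R(1/2,\wt{\tau}_v,w_{212})$ differs from $A(1/2,\wt{\tau}_v,w_{212})$ only by a non-zero scalar and automatically inherits irreducibility and non-vanishing of its image.

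For the complementary-series case, write $\tau_v=\Ind_{B_o}^{\GL_2}\nu|\ |^a\times\nu|\ |^{-a}$ with $\nu$ unitary and $a\in(0,1/9)$. Induction in stages gives an embedding $I_{\wt{P}_1}(s,\wt{\tau}_v)\hookrightarrow I_{\wt{B}}(s\beta_2,\wt{\nu}|\ |^a\boxtimes \wt{\nu}|\ |^{-a})$, and the normalized operator $R(s,\wt{\tau}_v,w_{212})$ factors as a composition of three normalized rank-one operators along the reduced decomposition $w_{212}=w_2 w_1 w_2$. At $s=1/2$ each factor is either an $\Mp_2$ or a $\GL_2$ normalized rank-one operator, and the unitarity bound $a<1/9$ keeps every intermediate parameter inside the region where the $L$-function computations recalled in Section~2.3 make the rank-one operator holomorphic and non-zero.

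The main obstacle is showing that the composite image is genuinely irreducible, rather than some proper reducible subquotient of the target. I would resolve this by tracking the image step by step and identifying it with the Langlands quotient of the principal series obtained after reordering the exponents $(a+1/2,-a+1/2)$ into the positive Weyl chamber, whose irreducibility again follows from the Langlands classification on $\Mp_4(k_v)$. Once irreducibility is established in both cases, non-vanishing is automatic from the fact that the composition of non-zero rank-one normalized operators is non-zero.
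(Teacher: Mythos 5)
Your reduction to the tempered and complementary-series cases and the framing via Langlands quotients is the right idea, but the mechanism you propose for the complementary-series case differs from what the paper actually does, and as stated it has a gap at precisely the point you flag as ``the main obstacle.''

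You propose to factor $R(s,\wt{\tau}_v,w_{212})$ along a reduced decomposition $w_{212}=w_2w_1w_2$ into three normalized rank-one operators, show each is holomorphic and non-zero at $s=1/2$ using $a<1/9$, and then ``track the image step by step.'' The trouble is that holomorphy and non-vanishing of each rank-one factor does not yield irreducibility of the image of the composite, nor even a usable description of it: $w_{212}$ is not the longest Weyl element of $\Sp_4$, so the image of $A(\Lambda,w_{212})$ on the full principal series is not \emph{a priori} the Langlands quotient, and ``tracking step by step'' is not an argument. Moreover your final remark that ``non-vanishing is automatic from the fact that the composition of non-zero rank-one normalized operators is non-zero'' is false in general, since a composition of non-zero operators between reducible modules can vanish; in the paper non-vanishing is a consequence of irreducibility, not the other way around.

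What the paper does instead is to \emph{extend} the chain rather than refine it. Writing $\Lambda=s\beta_2+a\alpha_1$, it composes $A(\Lambda,w_{212})$ with the further operator $A(w_{212}\Lambda,w_1)$ to obtain $A(\Lambda,w_0)$ for the longest element $w_0$, with lengths adding. One then checks two things: (i) since $\angb{w_{212}\Lambda}{\alpha_1^\vee}=2a$ with $a\in(0,1/9)$, this extra $\GL_2$-type rank-one operator is an isomorphism; and (ii) at $s=1/2$, $\Lambda$ lies in the open positive Weyl chamber (here $\angb{\Lambda}{\alpha_1^\vee}=2a>0$ and $\angb{\Lambda}{\alpha_2^\vee}=s-a>0$), so the image of $A(\Lambda,w_0)$ is the irreducible Langlands quotient. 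Since the post-composed operator is an isomorphism, the image of $A(\Lambda,w_{212})$ is isomorphic to that Langlands quotient, hence irreducible and non-zero. This ``compose with an isomorphism to reach the longest-element operator'' step is the ingredient your outline is missing; once you add it, the rest of your sketch (tempered case via Langlands classification, normalizing factor holomorphic and non-zero at $s=1/2$, so passing from $A$ to $R$ is harmless) matches the paper.
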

\begin{proof} In all cases considered below, the normalizing factor $r(s, \wt{\tau}_v, w_{212})$ has no pole or zero at $s=1/2$, therefore, it suffices to prove the lemma for the operator $A(1/2, \wt{\tau}_v, w_{212})$.

When $\tau_v$ is tempered, it follows from the Langlands classification theorem (cf. \cite{BaJ}). Now suppose that $\tau_v=\Ind_{B_o}\nu|\ |^a\boxtimes \nu|\ |^{-a}$ is a complementary series representation, we can assume $a\in (0,1/9)$ by \cite{KiS}. By inducing by steps, 
$$\Ind_{\wt{P}_1}^{\Mp_4}(s\beta_2, \wt{\tau}_v)=\Ind_{\wt{T}}^{\Mp_4}(s\beta_2+a\alpha_1, \wt{\nu}\boxtimes \wt{\nu})=I(\wt{\nu}|\ |^{s+a}\boxtimes \wt{\nu}|\ |^{s-a})$$
 and $A(s\beta_2,\wt{\tau}_v,w_{212})=A(s\beta_2+a\alpha_1, \wt{\nu}\boxtimes \wt{\nu}, w_{212})$. Consider the chain of intertwining maps:
\begin{diagram}
I_{\wt{B}}(\wt{\nu}|\ |^{s+a}\boxtimes \wt{\nu}|\ |^{s-a}) &\rTo^{A(s\beta_2+a\alpha_1, w_{212})} &I_{\wt{B}}(\wt{\nu}^{-1}|\ |^{a-s}\boxtimes \wt{\nu}^{-1}|\ |^{-s-a}) &\rTo^{A(w_{212}(s\beta_2+a\alpha_1), w_1)} &I_{\wt{B}}(\wt{\nu}^{-1}|\ |^{-s-a}\boxtimes \wt{\nu}^{-1}|\ |^{a-s}).
\end{diagram}

The last map $A(w_{212}(s\beta_2+a\alpha_1), w_1)$ is the functorial lift of the rank-one operator
\begin{diagram}
I_{\wt{B}_o}^{\wt{\GL}_2}(\wt{\nu}^{-1}|\ |^{a-s}\boxtimes \wt{\nu}^{-1}|\ |^{-s-a}) &\rTo &I_{\wt{B}_o}^{\wt{\GL}_2}(\wt{\nu}^{-1}|\ |^{-s-a}\boxtimes \wt{\nu}^{-1}|\ |^{a-s}).
\end{diagram}
which is an isomorphism. Therefore $A(w_{212}(s\beta_2+a\alpha_1), w_1)$ is an isomorphism. On the other hand, the chain composition above yields $A(s\beta_2+a\alpha_1, \wt{\nu}\boxtimes \wt{\nu}, w_{212}w_1)$ whose image by Langlands classification is irreducible when $s=1/2$. Therefore, the image of $A(1/2, \wt{\tau}_v, w_{212})$ is isomorphic to the Langlands quotient and thus irreducible. In particular, it is nonzero.
\end{proof}

This shows that the image of $R(1/2, \wt{\tau}_v, w_{212})$ is the Langlands quotient $J_{\wt{P}_1}(1/2,\wt{\tau}_v)$ with initial data described as in the lemma.
Therefore, we have 
$$\Res_{s=1/2}E_{\wt{P}_1}(s,\wt{g},\Phi_s,\wt{P}_1)=\bigotimes_v J_{\wt{P}_1}(1/2,\wt{\tau}_v).$$
Taking residue of Eisenstein series commutes with taking the constant term, as in
\begin{diagram}
I_{\wt{P}_1}(s,\wt{\tau}) & \rTo^{\Phi_s\mapsto \Res_{s=s_o}E(\Phi_s)} & \msc{A}^2(\Sp_4(k)\backslash \Mp_4(\A_k)) \\
 & \rdTo_{\Phi_s \mapsto \Res_{s=s_o}E_{\wt{P}_1}(\Phi_s)} & \dTo_{\text{take const. term}} \\
& &\msc{A}(N_1(\A_k)T(k)\backslash \Mp_4(\A_k)).
\end{diagram}

We know that the right vertical map is injective when restricted to the image of the top map $\Res_{s=s_o}E(-)$. The Eisenstein series $E(s,\wt{g}, \Phi_s,\wt{P}_1)$ has a pole at $s=1/2$, and we consider the space spanned by its residues at $s=1/2$. Since $w(\beta_2/2)=-\alpha_3/2$, it follows from the Langlands' criterion (cf. \cite{MoW} I.4 ) that these residues are square integrable. By above injectivity, the residue $\Res_{s=1/2}E(s,\wt{g},\Phi_s,\wt{P}_1)$ can be identified with $\bigotimes_v J_{\wt{P}_1}(1/2,\wt{\tau}_v)$, which is irreducible and nonzero. 

We will make implicit identification of this kind for the non-Siegel and Borel case later without mentioning the details again.

Hence, for the Siegel parabolic case we have shown
\begin{thm}\label{thm: siegel}
Let $\msc{A}_\text{cusp}(\GL_2(\A_k))$ denote the cuspidal representations on $\GL_2(\A_k)$ and $\msc{S}\subseteq \msc{A}_\text{cusp}(\GL_2(\A_k))$ be defined as
$$\msc{S}=\set{\tau: \tau\cong \tau^\vee, \omega_\tau\ne \mbf{1}}.$$
From above, we get $J_{\wt{P}_1}(1/2,\wt{\tau}_v)$ the irreducible representation as the image of $R(1/2,\wt{\tau}_v,w)$. Then the representation $J_{\wt{P}_1}(1/2,\wt{\tau})=\bigotimes_v J_{\wt{P}_1}(1/2,\wt{\tau}_v)$ occurs in the residual spectrum of $\Mp_4(\A_k)$. In fact
$$L^2_\text{d}(\wt{P}_1)=\bigoplus_{\tau\in \msc{S}} J_{\wt{P}_1}(1/2,\wt{\tau}),$$
where $\tau$ runs over all representations in $\msc{S}$.
\end{thm}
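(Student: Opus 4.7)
The plan is to determine which irreducible summands of $L^2_\text{d}(\wt{P}_1)$ arise, by locating the poles of the Eisenstein series $E(s,\wt{g},\phi,\wt{P}_1)$ in $\text{Re}(s) \geq 0$ and identifying the image at each pole. Since the poles of the Eisenstein series coincide with those of its constant term
$E_{\wt{P}_1}(s,\wt{g},\phi,\wt{P}_1) = \Phi_s(\wt{g}) + M(s,\wt{\tau},w_{212})\Phi_s(\wt{g})$
and $\Phi_s$ is entire, the task reduces to analyzing the global intertwining operator $M(s,\wt{\tau}, w_{212})$ and identifying its image at any pole.

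I would then decompose $M(s,\wt{\tau}, w_{212})$ as the product of the global normalizing factor $L(2s,\tau,\text{Sym}^2)/[L(2s+1,\tau,\text{Sym}^2)\varepsilon(2s,\tau,\text{Sym}^2)]$ with the tensor $\bigotimes_v R(s,\wt{\tau}_v,w_{212})$ of the normalized local operators. By Lemma~\ref{lm: siegel, local operator holomorphic}, each $R(s,\wt{\tau}_v,w_{212})$ is holomorphic on $\text{Re}(s) \geq 0$. The denominator $L(2s+1,\tau,\text{Sym}^2)$ is holomorphic and nonzero on this half-plane (being evaluated where $\text{Re}(2s+1) \geq 1$), and $\varepsilon(2s,\tau,\text{Sym}^2)$ is entire nonvanishing. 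Thus poles in $\text{Re}(s) \geq 0$ are exactly those of $L(2s,\tau,\text{Sym}^2)$. The standard result, recalled in the excerpt, says this $L$-function has a (necessarily simple) pole if and only if $\tau \cong \tau^\vee$ with $\omega_\tau$ a nontrivial quadratic character, and in that case the pole sits at $s = 1/2$. This isolates $\msc{S}$ and the unique pole location.

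Next, I would identify the residual representation. By Lemma~\ref{lm: image irred}, the image of $R(1/2,\wt{\tau}_v,w_{212})$ is the irreducible Langlands quotient $J_{\wt{P}_1}(1/2,\wt{\tau}_v)$ at every place, so the residue of the constant term at $s=1/2$ lands inside $\bigotimes_v J_{\wt{P}_1}(1/2,\wt{\tau}_v)$. Using injectivity of the constant-term map on residues (the vertical arrow in the commutative diagram of the excerpt), the global residue $\Res_{s=1/2} E(s,\wt{g},\Phi_s,\wt{P}_1)$ is identified with this tensor product. Square-integrability follows from Langlands' criterion since $w_{212}(\beta_2/2) = -\alpha_3/2$ is strictly antidominant. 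Hence $J_{\wt{P}_1}(1/2,\wt{\tau})$ embeds into $L^2_\text{d}(\wt{P}_1)$ for each $\tau \in \msc{S}$.

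Finally, to promote the embedding to a direct sum decomposition, I would invoke strong multiplicity one for $\GL_2$: distinct $\tau, \tau' \in \msc{S}$ differ at a positive density of unramified places, so the resulting $J_{\wt{P}_1}(1/2,\wt{\tau})$ and $J_{\wt{P}_1}(1/2,\wt{\tau}')$ are inequivalent, and exhaustion on the $\wt{P}_1$-side is automatic since no other $s \geq 0$ yields a pole. The main obstacle is really absorbed into the two local lemmas: controlling holomorphy of $R(s,\wt{\tau}_v,w_{212})$ in the complementary-series regime (where the zeros of the local normalizing $L$-factor must cancel the poles of the unnormalized operator) and identifying the local image with the Langlands quotient at the pole. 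With these in hand, the theorem assembles directly from the Eisenstein-series machinery.
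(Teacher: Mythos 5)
Your proposal follows the paper's argument essentially verbatim: you reduce the poles of $E(s,\wt{g},\phi,\wt{P}_1)$ to those of $M(s,\wt{\tau},w_{212})$, factor out the global $\text{Sym}^2$ normalizing factor, invoke Lemma~\ref{lm: siegel, local operator holomorphic} for holomorphy of the local normalized operators on $\text{Re}(s)\ge 0$, use the classical criterion that $L(2s,\tau,\text{Sym}^2)$ has a pole precisely when $\tau\cong\tau^\vee$ with $\omega_\tau\ne\mbf{1}$ to isolate $\msc{S}$ and the point $s=1/2$, then identify the local images as Langlands quotients via Lemma~\ref{lm: image irred}, assemble the global residue through the injectivity of the constant-term map on residues, and conclude square-integrability from $w_{212}(\beta_2/2)=-\alpha_3/2$ via Langlands' criterion. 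The only additions you make — remarking that the $\varepsilon$-factor is entire and nonvanishing, and invoking strong multiplicity one on $\GL_2$ to guarantee the $J_{\wt{P}_1}(1/2,\wt{\tau})$ for distinct $\tau\in\msc{S}$ are inequivalent — are minor explicit justifications the paper leaves implicit; they do not change the route.
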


\section{Decomposition for the non-Siegel maximal parabolic subgroup}
In this section, we consider the non-Siegel maximal parabolic with decomposition $P_2=M_2N_2$ inside $\Sp_4$, where $M_2 \cong \GL_1\times \SL_2$. Recall $\beta_1=\alpha_4/2$. Let $\mfr{a}^*_{P_2}=X(M_2)\otimes \R =\R\beta_1$ and $\rho_{P_2}=2\beta_1$ be the half sum of roots generating $N$. We identify $s\in \C$ with $s\beta_1$.

Let $\chi$ be a unitary Hecke character of $\GL_1(\A_k)$ and $\sigma$ be a cuspidal representation of $\Mp_2(\A_k)$. Then it gives a genuine cuspidal representation $\wt{\chi}\boxtimes \sigma$ of $\wt{M}_2(\A_k)$, and conversely every genuine representation of $\wt{M}_2(\A_k)$ arises in this way.

Given a function $\phi \in \Ind_{\wt{P}_2}^{\Mp_4}\wt{\chi}\boxtimes \sigma$, we define the associated Eisenstein series on $\Mp_4(\A_k)$ by

$$E(s,\wt{g}, \phi,\wt{P}_2)=\sum_{\gamma\in P_2(k)\backslash \Sp_4(k)} \phi(\gamma \wt{g})\text{exp}\angb{s+\rho_{P_2}}{H_{\wt{P}_2}(\gamma \wt{g})},$$
where $H_{\wt{P}_2}(\wt{g})= H_{P_2}(g)$ as before is the Harish-Chandra homomorphism on $\wt{P}_2$. We write $\Phi_s=\phi(\wt{g})\text{exp}\angb{s+\rho_{P_2}}{H_{\wt{P}_2}(\wt{g})}$. The poles of the Eisenstein series $E(s,\wt{g}, \phi,\wt{P}_2)$ coincide with those of its constant terms along $\wt{P}_2$, which is given by

$$E_{\wt{P}_2}(s, \wt{g}, \phi,\wt{P}_2)=\sum_{w\in \Omega_2} M(s,\wt{\chi}\boxtimes \sigma, w) \Phi_s(\wt{g}),$$
where $\Omega_2=\set{1,w_{121}}$. For any Weyl element $w$, any $f\in I_{\wt{P}_2}(s, \wt{\chi}\boxtimes \sigma)$ and Re$(s)\gg 0$,
$$M(s,\wt{\chi}\boxtimes \sigma, w)f(\wt{g})=\int_{N_2^w(\A_k)}f(w^{-1}n\wt{g})dn$$
converges absolutely. Similar as in proceeding section, we have $N_2^w=U\cap wN_2^\prime w^{-1}$, where $N_2^\prime$ is the unipotent subgroup of $U$ opposite to $N_2$. We also note that here $I_{\wt{P}_2}(s, \wt{\chi}\boxtimes \sigma)$ is the representation space of $\Mp_4(\A_k)$ generated by $\Phi_s$.

The tensor decomposition of $I_{\wt{P}_2}(s, \wt{\chi}\boxtimes \sigma)$ gives
$$M(s,\wt{\chi}\boxtimes \sigma, w)=\otimes A(s, \wt{\chi}_v\boxtimes \sigma_v, w), \text{ with } A(s, \wt{\chi}_v\boxtimes \sigma_v, w)f_v(\wt{g}_v)= \int_{N_2^w(k_v)}f_v(w^{-1}n\wt{g}_v)dn,$$
where $f=\otimes f_v$ and $f_v$ is the unique $k_v$-fixed function with normalization $f_v(e_v)=1$ for almost all $v$.

\subsection{Cuspidal Representation of $\Mp_2(\A_k)$}
We recall a few facts on the cuspidal representations $\sigma$ of $\Mp_2(\A_k)$. There are mainly two types to consider.

Let $\psi=\prod_v \psi_v$ be the fixed non-trivial additive character of $k\backslash \A_k$ given before for the Weil factor consideration. Then we have the decomposition
$$L_\text{cusp}^2(\Mp_2(\A_k))=(\text{ETF})\oplus \bigoplus_{\pi}L_\pi^2,$$
as $\pi$ ranges over all cuspidal representations of $\PGL_2(\A_2)$.

We give a brief explanation of the space $\text{ETF}$, which can be decomposed as
$$\text{ETF}=\bigoplus_{\eta}\text{ETF}_\eta=\bigoplus_{\eta}\Big(\bigoplus_S\omega_{\psi,\eta,S}\Big).$$
The $\eta$ is taken over all quadratic Hecke characters of $k^\times \backslash \A_k^\times$, and $S$ over finite set of positive even cardinality. Locally, for any place $v$ of $k$, consider the even and odd Weil representations $\omega^+_{\psi_v,\eta_v}$ and $\omega^-_{\psi_v,\eta_v}$ (cf. \cite{Kud}). Let $\wt{\chi}_{\psi_v}$ be the genuine character of $\wt{T}(k_v)$ attached to the Weil index, then the even Weil representation $\omega^+_{\psi_v,\eta_v}$
lies in the short exact sequence
$$0\to \omega^+_{\psi_v,\eta_v} \to \Ind_{\wt{B}_o}^{\Mp_2} \wt{\chi}_{\psi_v}(\eta_v |\ |^{-1/2}) \to st_{\psi_v,\eta_v} \to 0. $$

On the other hand, one knows that the odd Weil representations $\omega^-_{\psi_v,\eta_v}$ are supercuspidal. Now if $S$ is any nonempty finite set of places of $k$ with $|S|$ even, then the representation
$$\omega_{\psi,\eta,S}=\bigotimes_{v\in S}\omega^-_{\psi_v,\eta_v} \otimes \bigotimes_{v\notin S}\omega^+_{\psi_v,\eta_v}$$
is a cuspidal representation of $\Mp_2(\A_k)$. The representation space (ETF) is generated by all such $\omega_{\psi,\eta,S}$.

The complement of ETF is exhausted by $L_\pi^2$, which is given by theta correspondence with a cuspidal representation $\pi$ on $\PGL_2$ as input. The correspondence equates various arithmetic invariants, $L$-functions and $\gamma$ factors etc. For details, we refer to \cite{GaS}.

\subsection{Analysis of Local Intertwining Maps}
As in the Siegel case, we analyze both the local intertwining operator $A(s, \wt{\chi}_v\boxtimes \sigma_v,w_{121})$ for the almost all unramified places $v$ and the other finitely many exceptions.

We assume first $v$ is such that all $\psi_v, \chi_v$ and $\sigma_v$ are unramified, and $\eta_v$ as well in the ETF case. That is, we have $\sigma_v\hookrightarrow \Ind_{\wt{B}_o}^{\Mp_2}\wt{\mu}_v$ for an unramified character $\mu_v$ of $\GL_1(k_v)$.
By applying the Gindikin-Karpelevich formula in Proposition \ref{prop: main}, it follows
$$M(s,\wt{\chi}\boxtimes\sigma, w_{121})f=\frac{L^S(s,\chi\times \sigma)}{L^S(s+1,\chi\times \sigma)}\frac{L^S(2s,\chi^2)}{L^S(2s+1,\chi^2)}  \bigotimes_{v\notin S} f'_v \otimes \bigotimes_{v\in S} A(s, \wt{\chi}_v\boxtimes \sigma_v, w_{121}) f_v.$$

The Rankin-Selberg product $L(s,\chi\times \sigma)$, or more precisely the local analog, is given as in \cite{Szp} sect. 7. In order to determine the pole of $M(s,\wt{\chi}\boxtimes\sigma, w_{121})$, we need to analyze the partial $L$-functions $L^S$ and the local operators.

\subsection{The ETF Case}

First we analyze $L^S(s,\chi \times \sigma)$. Let $\sigma=\omega_{\psi,\eta,S_\sigma}\in $ ETF for some $S_\sigma$ of even cardinality. Without loss of generality we assume $S_\sigma\subseteq S$. The partial $L$-function becomes $L^S(s,\chi \times \sigma)=L^S(s-1/2,\chi\eta)L^S(s+1/2,\chi\eta)$ since locally this holds by the description of even Weil representation. The operator $M(s,\wt{\chi}\boxtimes\sigma, w_{121})$ can thus be simplified as

$$M(s,\wt{\chi}\boxtimes\sigma, w_{121})f=\frac{L^S(s-1/2,\chi\eta)}{L^S(s+3/2,\chi\eta)}\frac{L^S(2s,\chi^2)}{L^S(2s+1,\chi^2)}  \bigotimes_{v\notin S} f'_v \otimes \bigotimes_{v\in S} A(s, \wt{\chi}_v\boxtimes \sigma_v, w_{121}) f_v.$$

As before, we normalize the intertwining operator by giving

\begin{align}\label{ETF: normalizing}
r(s, \wt{\chi}_v\boxtimes \sigma_v, w_{121}) &=\frac{L(s,\chi_v\times \sigma_v)}{L(s+1,\chi_v\times \sigma_v)\varepsilon(s,\chi_v\times \sigma_v)}\frac{L(2s,\chi_v^2)}{L(2s+1,\chi_v^2)\varepsilon(2s,\chi_v^2)},\\
A(s, \wt{\chi}_v\boxtimes \sigma_v, w_{121})&=r(s, \wt{\chi}_v\boxtimes \sigma_v, w_{121}) R(s, \wt{\chi}_v\boxtimes \sigma_v, w_{121}). \notag
\end{align}

Regarding the holomorphicity of the normalized intertwining operator, we have

\begin{lm}\label{lm: ETF lm}
For $v\in S$, $R(s, \wt{\chi}_v\boxtimes \sigma_v,w_{121})$ can be continued to a holomorphic function for Re$(s)\ge 0$.
\end{lm}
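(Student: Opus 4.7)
The strategy parallels Lemma \ref{lm: siegel, local operator holomorphic}: since the denominator factors $L(s+1,\chi_v\times \sigma_v)$, $L(2s+1,\chi_v^2)$ and the $\varepsilon$-factors in $r(s, \wt{\chi}_v\boxtimes \sigma_v, w_{121})$ are holomorphic and nowhere vanishing on $\mathrm{Re}(s)\ge 0$, it suffices to match, pole by pole, the singularities of $A(s, \wt{\chi}_v\boxtimes \sigma_v, w_{121})$ with those of $L(s,\chi_v\times \sigma_v)\, L(2s,\chi_v^2)$ on the closed right half-plane. I would split the argument according to whether $v\in S_\sigma$. If $v\in S_\sigma$, then $\sigma_v=\omega^-_{\psi_v,\eta_v}$ is supercuspidal; the local Rankin-Selberg factor $L(s,\chi_v\times \sigma_v)$ constructed in \cite{Szp} sect.~7 is trivial, so $r$ is holomorphic and nonvanishing on the relevant region, and the holomorphy of $A(s,\wt{\chi}_v\boxtimes \sigma_v, w_{121})$ on $\mathrm{Re}(s)\ge 0$ follows from \cite{BoW} sect.~IV.4 and \cite{Sil} sect.~5.4 just as in the tempered branch of the Siegel argument.

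The remaining case is $v\in S\setminus S_\sigma$, where $\sigma_v=\omega^+_{\psi_v,\eta_v}$ embeds in the $\Mp_2$ principal series $\Ind_{\wt{B}_o}^{\Mp_2}\wt{\chi}_{\psi_v}(\eta_v|\ |^{-1/2})$. Inducing in stages realizes $I_{\wt{P}_2}(s,\wt{\chi}_v\boxtimes \sigma_v)$ as a subrepresentation of a principal series of $\Mp_4(k_v)$ with explicit torus data in $\chi_v$ and $\eta_v$, and identifies $A(s, \wt{\chi}_v\boxtimes \sigma_v, w_{121})$ with the restriction of the corresponding principal-series operator. Applying the reduced decomposition $w_{121}=w_1w_2w_1$ and multiplicativity, that operator factors as a composition of three rank-one intertwiners: two $\SL_2$-type operators (one from each $w_1$) along the short roots $\alpha_1$ and $\alpha_3$ in the inversion set of $w_{121}$, and one $\Mp_2$-type operator (from $w_2$) along the long root $\alpha_4$. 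Section 2.3 recalls that the poles of each rank-one factor are controlled respectively by ratios of the shape $L(\cdot,\mu_v)/L(\cdot+1,\mu_v)$ for the short-root operators and by $L(2\cdot,\mu_v^2)/L(2\cdot+1,\mu_v^2)$ for the long-root operator, the character $\mu_v$ being determined by the twist parameters. Matching these candidate poles against the factorization $L(s,\chi_v\times \sigma_v)=L(s-1/2,\chi_v\eta_v)L(s+1/2,\chi_v\eta_v)$ together with $L(2s,\chi_v^2)$, one checks case by case that every pole of $A(s,\wt{\chi}_v\boxtimes \sigma_v, w_{121})$ on $\mathrm{Re}(s)\ge 0$ is simple and cancelled by a pole of the numerator of $r$.

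The main obstacle is the bookkeeping of twist parameters as one propagates through each step of the reduced decomposition, to verify that the candidate rank-one poles occur at precisely the points where $r$ is singular and that they do not compound into higher-order singularities inside the composition. At archimedean places the same strategy applies verbatim once the $p$-adic $L$-ratios are replaced by the corresponding Gamma-function ratios for $\SL_2(\R)$ and $\Mp_2(\R)$ principal-series operators, as in the archimedean portion of Lemma \ref{lm: siegel, local operator holomorphic}.
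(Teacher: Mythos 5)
Your proposal follows essentially the same route as the paper: split on $v\in S_\sigma$ versus $v\in S\setminus S_\sigma$, dispatch the supercuspidal case via temperedness, then in the even Weil case embed $I_{\wt{P}_2}(s,\wt{\chi}_v\boxtimes\sigma_v)$ into the principal series $I_{\wt{B}}(\wt{\chi}_v|\ |^s\boxtimes\wt{\eta}_v|\ |^{-1/2})$, factor $A(s,\wt{\chi}_v\boxtimes\sigma_v,w_{121})$ through the reduced word $w_1w_2w_1$ into two $\SL_2$-type and one $\Mp_2$-type rank-one operator, and match their potential poles at $s=0$ (when $\chi_v^2=\mbf{1}$) and $s=1/2$ (when $\chi_v=\eta_v$) against $L(s-1/2,\chi_v\eta_v)L(s+1/2,\chi_v\eta_v)L(2s,\chi_v^2)$, both non-archimedean and archimedean. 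This is exactly the paper's argument; the only minor difference is that you assert the supercuspidal local $L$-factor is trivial, where the paper only uses the weaker and sufficient fact that it has no poles on $\re(s)\ge 0$.
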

\begin{proof} In all cases below, the denominator of the normalizing factor $r(s, \wt{\chi}_v\boxtimes \sigma_v,w_{121})$ has no zero or pole for Re$(s)\ge 0$, therefore we can neglect it for our purpose of proving the lemma. It suffices to compare the poles of $L(s,\chi_v\times \sigma_v)L(2s,\chi_v^2)$ and $A(s, \wt{\chi}_v\boxtimes \sigma_v,w_{121})$.

If $v\in S_\sigma$, $\sigma_v$ is supercuspidal and then we are in the tempered case. The local factor $L(s,\chi_v\times \sigma_v)$ has no poles and the operator $A(s, \wt{\chi}_v\boxtimes \sigma_v,w_{121})$ is holomorphic for Re$s\ge 0$.

Suppose $v\in S\backslash S_\sigma$, then $\sigma_v$ is the even Weil representation $\omega^+_{\psi_v,\eta_v}$. In this case, $\Ind_{\wt{P}_2}(s, \wt{\chi}_v\boxtimes \sigma_v)\hookrightarrow \Ind_{\wt{B}}\wt{\chi}_v|\ |^s\boxtimes \wt{\eta}_v|\ |^{-1/2}$ and the intertwining operator $A(s\beta_1,\wt{\chi}_v\boxtimes \wt{\eta}_v|\ |^{-1/2},w_{121})$ for the latter factorizes as
\begin{diagram}
I_{\wt{B}}(\wt{\chi}_v|\ |^s\boxtimes \wt{\eta}_v|\ |^{-1/2}) &\rTo^{A(w_1)}
&I_{\wt{B}}(\wt{\eta}_v|\ |^{-1/2} \boxtimes \wt{\chi}_v|\ |^s) &\rTo^{A(w_2)}
&I_{\wt{B}}(\wt{\eta}_v|\ |^{-1/2} \boxtimes \wt{\chi}_v^{-1}|\ |^{-s}) &\rTo^{A(w_1)}
&I_{\wt{B}}(\wt{\chi}_v^{-1}|\ |^{-s} \boxtimes \wt{\eta}_v|\ |^{-1/2}).
\end{diagram}

For Re$s\ge 0$, the first operator is holomorphic while the second has pole at $s=0$ when $\chi_v^2=\mbf{1}$. Also the last operator $A(w_1)$ gives a pole at $s=1/2$ when $\chi_v=\eta_v$.

The factor $L(s,\chi_v\times \sigma_v)$ defined as in \cite{Szp} sect. 7 is given by
$$L(s+1/2,\chi_v\eta_v)L(s-1/2,\chi_v\eta_v),$$
up to a sign. When $v$ is non-archimedean, we see already that the only possible pole of $L(s,\chi_v\times \sigma_v)L(2s,\chi_v^2)$ agrees with that of $A(s, \wt{\chi}_v\boxtimes \sigma_v,w_{121})$ for $s$ nonnegative.

If $v$ is real, then both $\chi_v, \eta_v$ are either trivial or the sign function. In this case the $L$-factor at real infinite place is given by
\begin{equation*}
L_\infty(s,\chi_v\times \sigma_v)=
\begin{cases}
\pi^{-s}\Gamma(\frac{s-1/2}{2})\Gamma(\frac{s+1/2}{2}) \text{ if } \chi_v\eta_v=\mbf{1}, \\
\pi^{-s-1}\Gamma(\frac{s+1/2}{2})\Gamma(\frac{s+3/2}{2}) \text{ if } \chi_v\eta_v=\text{sgn}.
\end{cases}
\end{equation*}
The only possible pole is at $s=1/2$ for $\chi_v=\eta_v=\text{sgn}$ . The complex case follows similarly. Therefore we see that in all cases, the poles of $L(s,\chi_v\times \sigma_v)L(2s,\chi_v^2)$ and $A(s, \wt{\chi}_v\boxtimes \sigma_v,w_{121})$ cancel each other and hence the lemma is proved.
\end{proof}

Now we are to consider the possible poles in different cases. There are three cases to consider

\begin{itemize}
\item[(1)] $\chi=\eta, s=3/2$;

\item[(2)] $\chi=\eta, s=1/2$;

\item[(3)] $\chi\ne \eta, \chi^2=\mbf{1}, s=1/2$.
\end{itemize}

\cu{First, $s=3/2, \chi=\eta.$} The intertwining operator $M(s,\wt{\chi}\boxtimes \sigma, w_{121})$ has a simple pole at $s=3/2$. We have the irreducibility of the image of intertwining operator, as in
\begin{lm}
For $s=3/2, \chi=\eta$, the image of $R(3/2, \wt{\chi}_v\boxtimes \sigma_v,w_{121})$ is irreducible and nonzero for all place $v$ of $k$.
\end{lm}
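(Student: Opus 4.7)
The plan is to mirror Lemma~\ref{lm: image irred} from the Siegel case. First I would check that $r(3/2, \wt{\chi}_v\boxtimes\sigma_v, w_{121})$ in \eqref{ETF: normalizing} is a nonzero finite scalar at the point under consideration: with $\chi_v=\eta_v$ and $s=3/2$ the four Tate factors are evaluated at arguments $\geq 1$ (namely $1, 2, 3, 4$ after substituting $\chi_v\eta_v = \mathbf{1}$ and $\chi_v^2 = \mathbf{1}$), so none has a pole or zero, and the $\varepsilon$-factors are units. Thus the image of $R(3/2, \wt{\chi}_v\boxtimes\sigma_v, w_{121})$ coincides with that of $A(3/2, \wt{\chi}_v\boxtimes\sigma_v, w_{121})$, and the claim reduces to the unnormalized operator.

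Then I would split into two cases. If $\sigma_v$ is tempered (for example $v\in S_\sigma$, so that $\sigma_v = \omega^-_{\psi_v,\eta_v}$ is supercuspidal, or a tempered component coming from $L^2_\pi$), then $(3/2, \wt{\chi}_v\boxtimes\sigma_v)$ is Langlands data: the real part $3/2$ lies strictly in the positive Weyl chamber of $\mfr{a}^*_{P_2}$. The Langlands classification theorem (cf.~\cite{BaJ}) gives $I_{\wt{P}_2}(3/2, \wt{\chi}_v\boxtimes\sigma_v)$ as a standard module whose unique irreducible quotient is the image of the long intertwining operator $A(3/2, \wt{\chi}_v\boxtimes\sigma_v, w_{121})$, and we are done.

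For $\sigma_v = \omega^+_{\psi_v,\eta_v}$ the even Weil representation (including all unramified places), I would use the embedding $\sigma_v\hookrightarrow \Ind_{\wt{B}_o}^{\Mp_2}\wt{\chi}_{\psi_v}(\eta_v|\cdot|^{-1/2})$ together with induction in stages to obtain
$$I_{\wt{P}_2}(3\beta_1/2, \wt{\chi}_v\boxtimes\sigma_v) \hookrightarrow I_{\wt{B}}(\wt{\chi}_v|\cdot|^{3/2}\boxtimes \wt{\chi}_{\psi_v}(\eta_v|\cdot|^{-1/2})).$$
Then $A(3\beta_1/2, \wt{\chi}_v\boxtimes\sigma_v, w_{121})$ is the restriction of the Borel-level operator, which I factor as $A(w_1)\circ A(w_2)\circ A(w_1)$ exactly as in the proof of Lemma~\ref{lm: ETF lm}. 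Recognising $\sigma_v$ equivalently as the Langlands quotient of $\Ind_{\wt{B}_o}^{\Mp_2}\wt{\chi}_{\psi_v}(\eta_v|\cdot|^{1/2})$ extracts Borel-level Langlands data $(3/2, 1/2)$, which is strictly dominant. A chain-composition argument parallel to Lemma~\ref{lm: image irred} then identifies the image as the unique Langlands quotient at the Borel level --- irreducible and nonzero.

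The main obstacle is verifying at the non-generic point $s=3/2$, $\chi_v=\eta_v$ that no intermediate rank-one operator in the chain collapses and that the image is correctly matched with the strictly dominant Langlands quotient datum $(3/2, 1/2)$. This requires a case-by-case analysis of the $\Mp_2$ and $\GL_2$ rank-one coefficients of Section~2.3 at that specific point, treated separately for archimedean and non-archimedean $v$.
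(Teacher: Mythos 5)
Your proposal is correct and follows essentially the same route as the paper: reduce to the unnormalized operator $A(3/2,\wt{\chi}_v\boxtimes\sigma_v,w_{121})$ (since $r$ is a nonzero finite scalar), invoke Langlands classification directly in the tempered (odd Weil) case, and in the even Weil case realize $\sigma_v$ as the quotient of $\Ind_{\wt{B}_o}^{\Mp_2}\wt{\eta}_v|\cdot|^{1/2}$ so that pre-composing with the Borel-level $A(w_2)$ exhibits the image as that of $A(w_{1212})$ from the strictly dominant point $3\beta_1/2+\alpha_2/4$. One small caution: the parallel to Lemma~\ref{lm: image irred} is not exact — there one post-composes with an isomorphism $A(w_1)$, while here one pre-composes with a surjection $A(w_2)$ whose image is the desired induced space — and your closing concern about intermediate rank-one factors ``collapsing'' is unnecessary, since once the image is identified with that of the long intertwining operator from strictly dominant data, Langlands classification gives irreducibility without any further case-by-case analysis of the $\Mp_2$ and $\GL_2$ coefficients.
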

\begin{proof} The original operator $A(s, \wt{\chi}_v\boxtimes \sigma_v,w_{121})$ is holomorphic at $s=3/2$, and thus its image is equal to that of $R(s, \wt{\chi}_v\boxtimes \sigma_v,w_{121})$. We will show the irreducibility of image for the unnormalized operator.

If $\sigma_v$ is odd Weil representation, then it is tempered and the lemma follows from Langlands classification theorem. Now assume $\sigma_v=\omega_{\psi_v,\eta_v}$ is the even Weil representation, or equivalently as the image of the rank one intertwining operator given by the nontrivial Weyl element:
\begin{diagram}
I_{\wt{B}_o}^{\Mp_2}(\wt{\eta}_v |\ |^{1/2}) &\rTo &I_{\wt{B}_o}^{\Mp_2}(\wt{\eta}_v |\ |^{-1/2}).
\end{diagram}
The image of $A(s\beta_1, \wt{\chi}_v\boxtimes \sigma_v,w_{121})$ in this case is the image of the following composition $A(s\beta_1+1/4\alpha_2, \wt{\chi}_v\boxtimes \wt{\eta}_v, w_{121}w_2)$:
\begin{diagram}
I_{\wt{B}}(\wt{\chi}_v|\ |^s\boxtimes \wt{\eta}_v|\ |^{1/2}) &\rTo{A(w_2)} &I_{\wt{B}}(\wt{\chi}_v|\ |^s\boxtimes \wt{\eta}_v|\ |^{-1/2}) &\rTo{A(w_{121})}
I_{\wt{B}}(\wt{\chi}_v|\ |^{-s}\boxtimes \wt{\eta}_v|\ |^{-1/2})\\
 &\rdOnto & \uInto \\
 & & I_{\wt{P}_2}(\wt{\chi}_v|\ |^s\boxtimes \sigma_v).
\end{diagram}
For $s=3/2$, $s\beta_1+1/4\alpha_2$ lies in the positive Weyl chamber, so by Langlands classification, the image of $A(s\beta_1+1/4\alpha_2, \wt{\chi}_v\boxtimes \wt{\eta}_v, w_{121}w_2)$ is irreducible and the proof is completed.
\end{proof}

We see the image of $R(3/2, \wt{\chi}_v\boxtimes \sigma_v,w_{121})$ is the unique Langlands quotient with appropriate data. We denote it by $J_{\text{ETF}}(3/2,\wt{\chi}_v\boxtimes \sigma_v)$ and write
$$J_\text{ETF}(3/2,\wt{\chi}\boxtimes \sigma)=\bigotimes_v J_{\text{ETF}}(3/2,\wt{\chi}_v\boxtimes \sigma_v).$$

\cu{Second, $s=1/2, \chi=\eta.$} We combine the partial $L^S(s-1/2,\chi\eta) \prod_{v\in S\backslash S_\sigma}L_v(s-1/2,\chi\eta)$ from the coefficient of normalized intertwining operator. We see that it has at least a simple zero at $s=1/2$, since $S_\sigma$ is nonempty of even cardinality. Therefore since $L^S(2s,\chi^2)$ has a simple pole at $s=1/2$, we see the intertwining operator can not have any pole.

\cu{Third, $s=1/2, \chi\ne \eta, \chi^2=\mbf{1}$.} In this case, the operator $M(s,\wt{\chi}\boxtimes \sigma, w_{121})$ has a pole if and only if $L^{S_\sigma}(s-1/2,\chi\eta)$ is nonvanishing, since we have for the global $L$-function $L(s-1/2,\chi\eta)\ne 0$ for $s=1/2$. Equivalently we must have
$$\chi_v\ne \eta_v \text{ for all } v\in S_\sigma.$$

Meanwhile, under this condition
\begin{lm}
The image of $R(1/2, \wt{\chi}_v\boxtimes \sigma_v,w_{121})$ is irreducible and nonzero for all $v$.
\end{lm}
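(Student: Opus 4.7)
The plan is to follow the same template as the previous ($s = 3/2$) lemma, splitting into cases according to the local type of $\sigma_v$, with one essential new twist to deal with a boundary phenomenon at $s = 1/2$. First a preliminary reduction: by the case analysis in the proof of Lemma \ref{lm: ETF lm}, the unnormalized operator $A(s, \wt{\chi}_v \boxtimes \sigma_v, w_{121})$ has a simple pole at $s = 1/2$ precisely when $\chi_v = \eta_v$; in the complementary subcase the normalizing factor $r(1/2, \wt{\chi}_v \boxtimes \sigma_v, w_{121})$ is finite and nonzero, so the images of $R$ and $A$ coincide and it suffices to analyze the unnormalized operator.

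When $v \in S_\sigma$ the representation $\sigma_v = \omega^-_{\psi_v,\eta_v}$ is supercuspidal hence tempered, and the standing hypothesis of the third case forces $\chi_v \ne \eta_v$ at such places. Since $s = 1/2 > 0$, the Langlands classification theorem identifies the image of $A(1/2, \wt{\chi}_v \boxtimes \sigma_v, w_{121})$ with the irreducible and nonzero Langlands quotient $J_{\wt{P}_2}(1/2, \wt{\chi}_v \boxtimes \sigma_v)$, as desired.

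The main case is $v \notin S_\sigma$ with $\sigma_v = \omega^+_{\psi_v,\eta_v}$. Exactly as in the $s = 3/2$ proof, embed $\sigma_v \hookrightarrow I_{\wt{B}_o}^{\Mp_2}(\wt{\eta}_v |\ |^{-1/2})$ to identify the image of $A(s\beta_1, \wt{\chi}_v \boxtimes \sigma_v, w_{121})$ with that of the long intertwining operator $A(s\beta_1 + \tfrac{1}{4}\alpha_2, \wt{\chi}_v \boxtimes \wt{\eta}_v, w_{1212})$. The main obstacle is that at $s = 1/2$ the exponent $\tfrac{1}{2}\beta_1 + \tfrac{1}{4}\alpha_2 = \tfrac{1}{2}\beta_2$ lies on the wall $\angb{\cdot}{\alpha_1^\vee} = 0$, so the Langlands classification in the strictly positive Weyl chamber of $\mfr{a}^*_B$ does not apply verbatim. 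The remedy is to re-express the ambient principal series via induction in stages through the Siegel parabolic: since $\chi_v$ and $\eta_v$ are both quadratic and hence unitary, $\chi_v/\eta_v$ is unitary and therefore never of the form $|\ |^{\pm 1}$, so the principal series
$$\wt{\tau}_v' := I_{\wt{B}_o}^{\wt{\GL}_2}(\wt{\chi}_v \boxtimes \wt{\eta}_v)$$
is an irreducible tempered representation of $\wt{\GL}_2(k_v)$, and
$$I_{\wt{B}}\bigl(\wt{\chi}_v|\ |^{1/2} \boxtimes \wt{\eta}_v|\ |^{1/2}\bigr) \;\cong\; I_{\wt{P}_1}(1/2, \wt{\tau}_v').$$
Viewed through $\wt{P}_1$ the exponent $\tfrac{1}{2}\beta_2$ is strictly dominant in $\mfr{a}^*_{\wt{P}_1}$, so Langlands classification does apply and identifies the image of the long intertwining operator with the irreducible Langlands quotient $J_{\wt{P}_1}(1/2, \wt{\tau}_v')$. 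In the degenerate subcase $\chi_v = \eta_v$, exactly the same Siegel-parabolic interpretation works with $\wt{\tau}_v'$ still irreducible, after replacing $A$ by the holomorphic $R$; continuity of $R$ in the inducing data (by specializing from nearby generic parameters) then gives the required nonvanishing at $s = 1/2$.
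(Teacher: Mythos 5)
Your overall strategy matches the paper's: you reduce to the even Weil representation, exhibit the ambient principal series by induction in stages through the Siegel parabolic $\wt{P}_1$ with irreducible tempered inducing data $\wt{\tau}_v'=I_{\wt{B}_o}^{\wt{\GL}_2}(\wt{\chi}_v\boxtimes\wt{\eta}_v)$, and apply the Langlands classification there. This is precisely the paper's device (``By induction in stages to the Siegel parabolic subgroup $\wt{P}_1$, $I_{\wt{B}}(\alpha_3/2,\wt{\chi}_v\boxtimes\wt{\eta}_v)=I_{\wt{P}_1}(\alpha_3/2,\Ind\wt{\chi}_v\boxtimes\wt{\eta}_v)$''), and you are right, and more explicit than the paper, that this re-expression is genuinely necessary: since $\angb{\alpha_3}{\alpha_1^\vee}=0$, the exponent $\tfrac12\beta_2$ at $s=1/2$ sits on a wall of the Borel chamber, so, unlike in the $s=3/2$ lemma, the Langlands argument cannot be run directly from $\wt{B}$. (The paper is a little cavalier in the $\chi_v\ne\eta_v$ subcase, saying only that the argument ``goes line by line as the previous lemma.'')

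The gap is in the degenerate subcase $\chi_v=\eta_v$. You invoke ``continuity of $R$ in the inducing data (by specializing from nearby generic parameters)'' to conclude that the image of $R(1/2,\wt{\chi}_v\boxtimes\sigma_v,w_{121})$ is nonzero. But holomorphy plus generic nonvanishing of an operator-valued function does not imply nonvanishing at a boundary point: the restriction of $R(w_1)$ to the irreducible quotient $J_{\wt{P}_1}(1/2,\wt{\tau}_v')$ is either $0$ or an isomorphism, and the limit-from-generic argument alone cannot distinguish the two outcomes. The paper closes exactly this gap by a concrete rank-one computation: factoring $R(w_{1212})=R(-\alpha_3/2,w_1)\cdot R(\alpha_3/2,w_{212})$, it first identifies the image of $R(\alpha_3/2,w_{212})$ with the Langlands quotient via $\wt{P}_1$, and then shows that the remaining rank-one operator $R(-\alpha_3/2,\wt{\eta}_v\boxtimes\wt{\chi}_v,w_1)$ (equivalently, $\Res_{s=1/2}A(-\alpha_4/4-s\alpha_2/2,\wt{\eta}_v\boxtimes\wt{\chi}_v,w_1)$) is a nonzero scalar multiple of the identity. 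This is the $\GL_2$ statement that the residue of the unnormalized intertwining operator at the reducible point is $-\mathrm{id}$ up to nonzero scalar, i.e.\ the Keys--Shahidi computation cited elsewhere in the paper (and in fact the local content of Lemma~\ref{lm: id}). You should replace the continuity appeal by this explicit rank-one identity; with that substitution the rest of your proof is sound.
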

\begin{proof} The argument is essential the same as previous lemma. It suffices to consider $\sigma_v$ being the even Weil representation case. We may assume $\chi_v=\eta_v$, otherwise the operator $A(s, \wt{\chi}_v\boxtimes \sigma_v,w_{121})$ is holomorphic at $s=1/2$ and the argument goes line by line as the previous lemma.

If $\chi_v=\eta_v$, then the operator $A(s, \wt{\chi}_v\boxtimes \sigma_v,w_{121})$ has a simple pole at $s=1/2$ and the image of $R(1/2, \wt{\chi}_v\boxtimes \sigma_v,w_{121})$ is equal to the image of $\Res_{s=1/2}A(s, \wt{\chi}_v\boxtimes \sigma_v,w_{121})$ (the right diagonal map below). We have the commutative diagram

\begin{diagram}
I_{\wt{B}}(\wt{\chi}_v|\ |^s\boxtimes \wt{\eta}_v|\ |^{1/2}) &\rTo{A(w_2)} & I_{\wt{B}}(\wt{\chi}_v|\ |^s\boxtimes \wt{\eta}_v|\ |^{-1/2}) &\rTo{\Res_{s=1/2}A(w_{121})}
& I_{\wt{B}}(\wt{\chi}_v|\ |^{-s}\boxtimes \wt{\eta}_v|\ |^{-1/2}).\\
&\rdOnto & \uInto  & \ruTo_{\Res_{s=1/2}A(w_{121})}\\
& & I_{\wt{P}_2}(\wt{\chi}_v|\ |^s\boxtimes \sigma_v)
\end{diagram}

It follows from the diagram that it suffices to consider the image of the top composition map $\big(\Res_{s=1/2}A(s\beta_1-1/4\alpha_2, \wt{\chi}_v\boxtimes \wt{\eta}_v, w_{121})\big)\cdot A(s\beta_1+1/4\alpha_2, \wt{\chi}_v\boxtimes \wt{\eta}_v, w_2)$, which is equal to
$$\big(\Res_{s=1/2}A(-\alpha_4/4-s\alpha_2/2, \wt{\eta}_v\boxtimes \wt{\chi}_v, w_1)\big)\cdot A(s\beta_1+1/4\alpha_2, \wt{\chi}_v\boxtimes \wt{\eta}_v, w_{212}).$$

When $s=1/2$, $s\beta_1+1/4\alpha_2=\alpha_3/2$. By induction in stages to the Siegel parabolic subgroup $\wt{P}_1$, $I_{\wt{B}}(\alpha_3/2, \wt{\chi}_v \boxtimes \wt{\eta}_v)=I_{\wt{P}_1}(\alpha_3/2, \Ind\wt{\chi}_v\boxtimes \wt{\eta}_v)$. So the image of $A(\alpha_3/2, \wt{\chi}_v\boxtimes \wt{\eta}_v, w_{212})$ is irreducible by Langlands classification theorem. Moreover, reduction to a simple rank one computation shows that the map
$$\Res_{s=1/2}A(-\alpha_4/4-s\alpha_2/2, \wt{\eta}_v\boxtimes \wt{\chi}_v, w_1): I_{\wt{B}}(\wt{\eta}_v|\ |^{-1/2}\boxtimes \wt{\chi}_v|\ |^{-s}) \longrightarrow I_{\wt{B}}(\wt{\chi}_v|\ |^{-s}\boxtimes \wt{\eta}_v|\ |^{-1/2})$$ 
is a scaling map by a nonzero constant and thus an isomorphism. The proof of lemma is now completed.
\end{proof}

Based on this lemma, the image of $R(1/2, \wt{\chi}_v\boxtimes \sigma_v,w_{121})$ is a Langlands quotient, which we denote by $J_{\text{ETF}}(1/2,\wt{\chi}_v\boxtimes \sigma_v)$. Define
$$J_\text{ETF}(1/2,\wt{\chi}\boxtimes \sigma)=\bigotimes J_{\text{ETF}}(1/2,\wt{\chi}_v\boxtimes \sigma_v).$$
We see both $J_\text{ETF}(3/2,\wt{\chi}\boxtimes \sigma)$ and $J_\text{ETF}(1/2,\wt{\chi}\boxtimes \sigma)$ contribute to $L^2_\text{d}(\wt{P}_2)$.

\subsection{The $L^2_\pi$ case}

In this case, the local representation $\sigma_v$ is the obtained from the local component $\pi_v$ of a cuspidal representation $\pi$ on $\PGL_2(\A_k)$ by Shimura correspondence. Therefore $L^S(s,\chi\times \sigma)=L^S(s,\chi\times \pi)$ and the latter is known to be entire over $\C$. The operator $M(s,\wt{\chi}\boxtimes \sigma, w_{121})$ has a pole at $s=1/2$ provided $\chi^2=\mbf{1}, L(1/2,\chi\times \pi)\ne 0$ and that the normalized local operator for $v\in S$ is holomorphic and nonvanishing at $\re (s)\ge 0$. More precisely, we use exactly the same formula (\ref{ETF: normalizing})  in the ETF case for the normalization. Then

\begin{lm}
For $v\in S$, the operator $R(s,\wt{\chi}_v\boxtimes \sigma_v,w_{121})$ can be continued to be a holomorphic function for Re$(s)\ge0$.
\end{lm}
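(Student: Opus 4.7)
The plan is to mimic the proof of the preceding ETF lemma. As noted there, the denominator of the normalizing factor in (\ref{ETF: normalizing}) --- namely $L(s+1,\chi_v\times\sigma_v)\varepsilon(s,\chi_v\times\sigma_v)L(2s+1,\chi_v^2)\varepsilon(2s,\chi_v^2)$ --- is holomorphic and non-vanishing on $\re(s)\ge 0$, so the task reduces to showing that every pole of the unnormalized operator $A(s,\wt{\chi}_v\boxtimes\sigma_v,w_{121})$ on this half-plane is matched by a pole of the same order in $L(s,\chi_v\times\sigma_v)L(2s,\chi_v^2)$. The Shimura/Waldspurger correspondence provides the key identity $L(s,\chi_v\times\sigma_v)=L(s,\chi_v\times\pi_v)$, so everything is expressed through the $\PGL_2(k_v)$ representation $\pi_v$.

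First I would dispose of the tempered case, when $\pi_v$ is supercuspidal, a discrete series, or a tempered principal series; then $\sigma_v$ is likewise tempered. Here $L(s,\chi_v\times\pi_v)$ is entire on $\re(s)>0$, and the operator $A(s,\wt{\chi}_v\boxtimes\sigma_v,w_{121})$ is holomorphic on $\re(s)>0$ by the same adaptation of \cite{BoW} IV.4 and \cite{Sil} sect. 5.4 used in Lemma \ref{lm: siegel, local operator holomorphic}. The only remaining point is $s=0$, where $L(2s,\chi_v^2)$ carries a simple pole when $\chi_v^2=\mbf{1}$; one verifies by a rank-one reduction along $w_{121}=w_1w_2w_1$ that $A$ has a matching simple pole there, so the quotient $R(s,\wt{\chi}_v\boxtimes\sigma_v,w_{121})$ stays holomorphic on the unitary axis.

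The substantive case is when $\pi_v$ is a complementary-series principal series $\Ind_{B_o}^{\PGL_2}(\nu|\ |^a\boxtimes\nu^{-1}|\ |^{-a})$ with $\nu$ unitary and $a>0$ (strictly less than $1/2$, and in fact much smaller by \cite{KiS}). Then
$$L(s,\chi_v\times\pi_v)=L(s+a,\chi_v\nu)L(s-a,\chi_v\nu^{-1}),$$
so on $\re(s)\ge 0$ the product $L(s,\chi_v\times\sigma_v)L(2s,\chi_v^2)$ can pole only at $s=a$ (if $\chi_v\nu^{-1}=\mbf{1}$) or at $s=0$ (if $\chi_v^2=\mbf{1}$). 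Via the local Shimura correspondence $\sigma_v$ is realized as a subquotient of an explicit principal series of $\Mp_2(k_v)$, so inducing in stages embeds $I_{\wt{P}_2}(s,\wt{\chi}_v\boxtimes\sigma_v)$ into a full principal series of $\Mp_4(k_v)$, and the operator $A(s,\wt{\chi}_v\boxtimes\sigma_v,w_{121})$ factors as a composition of three rank-one operators along $w_1,w_2,w_1$, exactly as in the ETF analysis. Applying the short-root $\GL_2$-formula and the long-root $\Mp_2$-formula from Section 2.3 to each factor and tracking the zeros and poles should show that the poles of the rank-one pieces account precisely for the $s=a$ and $s=0$ poles identified above.

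The main obstacle will be writing down the explicit principal-series realization of $\sigma_v$ under the local Shimura correspondence when $\pi_v$ is a complementary series --- this is what is needed to make the rank-one factorization concrete. The archimedean cases will additionally require expressing the relevant $L$-factors in terms of $\Gamma$-functions and checking pole cancellation by direct inspection in the finitely many possibilities for $\nu,\chi_v\in\{\mbf{1},\text{sgn}\}$, exactly in the style of the real/complex verification at the end of Lemma \ref{lm: ETF lm}.
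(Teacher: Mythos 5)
Your proposal takes essentially the same route as the paper: reduce via $L(s,\chi_v\times\sigma_v)=L(s,\chi_v\times\pi_v)$ to the $\PGL_2$ picture, dispose of the tempered case by the same references, and for complementary series factor $A(s,\wt{\chi}_v\boxtimes\sigma_v,w_{121})$ into rank-one operators along $w_1,w_2,w_1$ and match poles against $L(s+a,\chi_v\nu)L(s-a,\chi_v\nu)L(2s,\chi_v^2)$. The one obstacle you flag is in fact immediate: since $\pi_v$ has trivial central character, unitarity forces $\nu^2=\mbf{1}$, and the theta lift is simply $\sigma_v=\Ind_{\wt{B}_o}^{\Mp_2}\wt{\nu}|\ |^a$, so the principal-series embedding needed for the rank-one factorization is exactly as in the ETF case.
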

\begin{proof} The proof follows from similar computation as in Lemma \ref{lm: ETF lm}.

In this case $\sigma_v$ is the theta lift of an irreducible representation $\pi_v$ on $\PGL_2(k_v)$ or $\text{PD}_v^\times$. We know that $\sigma_v$ is tempered if and only if the same holds for $\pi_v$, and also $L(s,\chi_v\times \sigma_v)=L(s,\chi_v\times \pi_v)$. We refer to \cite{GaS} for more properties for the theta lift and its consequences.

In view of this, the lemma holds in the tempered case. Now assume $\sigma_v$ is not tempered, then we have $\pi_v=\Ind_{B_o}\nu|\ |^a\boxtimes \nu|\ |^{-a}$ with $\nu^2=\mbf{1}$ and $a\in (0,1/9)$. Hence we have $\sigma_v=\Ind_{\wt{B}_o}\wt{\nu}|\ |^a$. The factorization of the intertwining operator $A(s\beta_1,\wt{\chi}_v\boxtimes \wt{\nu}|\ |^a,w_{121})$ gives
\begin{diagram}
I_{\wt{B}}(\wt{\chi}_v|\ |^s\boxtimes \wt{\nu}|\ |^a) &\rTo^{A(w_1)}
&I_{\wt{B}}(\wt{\nu}|\ |^a \boxtimes \wt{\chi}_v|\ |^s) &\rTo^{A(w_2)}
&I_{\wt{B}}(\wt{\nu}|\ |^a \boxtimes \wt{\chi}_v^{-1}|\ |^{-s}) &\rTo^{A(w_1)}
&I_{\wt{B}}(\wt{\chi}_v^{-1}|\ |^{-s} \boxtimes \wt{\nu}|\ |^a).
\end{diagram}
For Re$(s)\ge 0$, third operator is holomorphic, while the first operator $A(w_1)$ gives a pole at $s=a$ when $\chi_v=\nu$, the second operator at $s=0$ when $\chi_v^2=\mbf{1}$. On the other side, the factor $L(s,\chi_v\times \sigma_v)$ is given by $L(s+a,\chi_v\nu)L(s-a,\chi_v\nu)$, whose pole is at $s=a$ for $\chi_v=\nu$ as well. Therefore the potential poles of the normalizing fact $r(s,\wt{\chi}_v\boxtimes \sigma_v,w_{121})$ and original operator $A(s,\wt{\chi}_v\boxtimes \sigma_v,w_{121})$ cancel each other.

\end{proof}

The image of $R(1/2,\wt{\chi}_v\boxtimes \sigma_v,w_{121})$, by the same argument as in Lemma \ref{lm: image irred}, is a Langlands quotient with appropriate data. We denote the image by $J_\pi(1/2, \wt{\chi}_v\boxtimes \sigma_v)$ and let
$$J_\pi(1/2, \wt{\chi}\boxtimes \sigma)=\bigotimes J_\pi(1/2, \wt{\chi}_v\boxtimes \sigma_v),$$
which contributes to the residual spectrum $L^2_\text{d}(\wt{P}_2)$.

In conclusion, we have proved the following

\begin{thm}\label{thm: non-siegel}
Let $\msc{P}_1, \msc{P}_2 \subseteq \msc{A}_\text{cusp}(\GL_1(\A_k)) \times \msc{A}_\text{cusp}(\Mp_2(\A_k))$ be defined as the collections of pairs:
$$\msc{P}_1=\set{(\chi, \sigma): \chi=\eta, \sigma\in \text{ETF}_\eta}, \msc{P}_2=\set{(\chi, \sigma): \chi^2=\mbf{1}, \sigma\in \text{ETF}_\eta, \chi_v\ne \eta_v \text{ if } v\in S_\sigma},$$
where it is always assumed that $\eta^2=\mbf{1}$. Also we let
\begin{align*}
\msc{P}_3 &= \set{(\chi, \pi): \chi^2=\mbf{1}, L(1/2,\chi\times\pi)\ne 0} \\
&\subseteq \msc{A}_\text{cusp}(\GL_1(\A_k)) \times \msc{A}_\text{cusp}(\PGL_2(\A_k)).
\end{align*}

Then the residual spectrum of $\Mp_4(\A_k)$ along $\wt{P}_2$ is given by
$$L^2_d(P_2)=\bigoplus_{(\chi,\sigma)\in\msc{P}_1} J_\text{ETF}(3/2,\wt{\chi}\boxtimes \sigma) \oplus \bigoplus_{(\chi,\sigma)\in\msc{P}_2} J_\text{ETF}(1/2,\wt{\chi}\boxtimes \sigma) \oplus \bigoplus_{(\chi,\pi)\in \msc{P}_3} \bigoplus_{\sigma\in L^2_\pi} J_\pi(1/2,\wt{\chi}\boxtimes \sigma).$$
\end{thm}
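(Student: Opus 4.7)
The plan is to assemble the case analyses developed in Sections 4.2--4.4 into the three-term direct-sum decomposition of $L^2_d(\wt{P}_2)$. First I would reduce the search for poles of $E(s,\wt{g},\Phi_s,\wt{P}_2)$ in $\re(s)\ge 0$ to locating poles of the single non-trivial intertwining operator $M(s,\wt{\chi}\boxtimes\sigma,w_{121})$, since the poles of an Eisenstein series coincide with those of its constant term along $\wt{P}_2$, and the trivial element of $\Omega_2$ contributes only a holomorphic summand. Using the normalization $M = r\cdot \bigotimes_v R$ together with the holomorphicity lemmas for each $R$-factor, the remaining pole analysis reduces to a global normalizing coefficient built from ratios of partial $L$-functions (the $\varepsilon$-factors being entire and nonzero).

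For $\sigma \in \text{ETF}_\eta$, using $L^S(s,\chi\times\sigma) = L^S(s-1/2,\chi\eta)L^S(s+1/2,\chi\eta)$, the candidate pole loci in $\re(s)\ge 0$ are $s = 3/2$ (requiring $\chi = \eta$) and $s = 1/2$ (requiring either $\chi = \eta$ or $\chi^2 = \mbf{1}$). I would dispose of the subcase $s=1/2$, $\chi=\eta$ by the compensating zero arising from $L^S(s-1/2,\chi\eta)\prod_{v\in S\setminus S_\sigma}L_v(s-1/2,\chi\eta)$, which cancels the simple pole of $L^S(2s,\chi^2)$; the two surviving loci yield precisely the $\msc{P}_1$ and $\msc{P}_2$ summands, the latter requiring in addition $\chi_v \ne \eta_v$ for all $v \in S_\sigma$ so that $L^{S_\sigma}(0,\chi\eta)\ne 0$. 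For $\sigma \in L^2_\pi$, the Shimura correspondence gives $L^S(s,\chi\times\sigma) = L^S(s,\chi\times\pi)$, which is entire, so the only candidate is $s = 1/2$ from the pole of $L^S(2s,\chi^2)$; requiring $\chi^2 = \mbf{1}$ and $L(1/2,\chi\times\pi)\ne 0$ then yields the $\msc{P}_3$ summand.

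At each admissible pole $s_0 \in \{3/2, 1/2\}$, I would identify the space of residues with $\bigotimes_v J_?(s_0,\wt{\chi}_v\boxtimes\sigma_v)$ by combining the irreducibility and nonvanishing lemmas for the image of $R(s_0,\cdot,w_{121})$ with the commutative diagram between residue and constant-term maps, whose right vertical arrow is injective on residual representations, exactly as in the Siegel case. Square integrability of these residues then follows from the Langlands criterion: since $w_{121}(\beta_1) = -\beta_1$, the exponent $w_{121}(s_0\beta_1) = -s_0\beta_1$ lies strictly in the negative obtuse Weyl chamber of $\wt{P}_2$ whenever $s_0 > 0$. Summing over all admissible $(\chi,\sigma)$ in each of the three families and invoking the Moeglin--Waldspurger description of the residual spectrum yields the stated decomposition.

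The main obstacle I anticipate is the bookkeeping of zeros and poles in the ETF case at $s = 1/2$, $\chi = \eta$: one must verify carefully that the combined factor $L^S(s-1/2,\chi\eta)\prod_{v\in S\setminus S_\sigma}L_v(s-1/2,\chi\eta)$ vanishes at $s = 1/2$ to at least the order of the simple pole of $L^S(2s,\chi^2)$, which rests essentially on $|S_\sigma|$ being positive and even. A secondary subtlety, already encountered in the Case 3 irreducibility lemma, is tracking the pole cancellations at places $v \in S\setminus S_\sigma$ where $\chi_v = \eta_v$ so as to confirm that the residual image is precisely the expected Langlands quotient.
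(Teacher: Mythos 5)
Your proposal follows essentially the same route as the paper: reduce to poles of $M(s,\wt{\chi}\boxtimes\sigma,w_{121})$ via the constant term, normalize by $r\cdot R$ and invoke the holomorphicity lemmas for the $R$-factors, then read off the admissible pole loci from the partial $L$-functions in the ETF and $L^2_\pi$ cases ($s=3/2$ with $\chi=\eta$; $s=1/2$ with $\chi^2=\mbf{1}$, $\chi_v\neq\eta_v$ on $S_\sigma$; $s=1/2$ with $\chi^2=\mbf{1}$, $L(1/2,\chi\times\pi)\neq 0$), discard the $\chi=\eta$, $s=1/2$ subcase by the compensating zero of $L^{S_\sigma}(s-1/2,\chi\eta)$ since $|S_\sigma|\geq 2$, identify each residue with $\bigotimes_v J(\cdot)$ through the irreducibility lemmas, and conclude square integrability from Langlands' criterion using $w_{121}(\beta_1)=-\beta_1$. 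This matches the paper's Sections 4.1--4.4, including the bookkeeping subtleties you flag.
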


\section{Decomposition for the Borel subgroup}
Given two unitary Hecke character $\chi, \mu$ of $\GL_1(\A_k)$, we define a genuine character $\wt{\chi}\boxtimes \wt{\mu}$ of $\wt{T}(\A_k)$ as before. Consider the space $\Ind_{\wt{B}}^{\Mp_4}\wt{\chi}\boxtimes \wt{\mu}$ of functions $\phi$ on $\Mp_4(\A_k)$ satisfying $\phi(n\wt{t}\wt{g})=(\wt{\chi}\boxtimes \wt{\mu})(\wt{t})\phi(g)$ for any $n\in U(\A_k), \wt{t}\in \wt{T}(\A_k)$ and $\wt{g}\in \Mp_4(\A_k)$. For each $\Lambda \in \mfr{a}^*_\C=\C \beta_1\oplus \C\beta_2$, the representation of $\Mp_4(\A_k)$ on the space of functions of the form
$$\Phi_\Lambda: \quad \wt{g}\mapsto \phi(\wt{g})\exp \angb{\Lambda+\rho_B}{H_{\wt{B}}(\wt{g})},$$
is equivalent to $I_{\wt{B}}(\Lambda, \wt{\chi}\boxtimes \wt{\mu})=\text{Ind}_{\wt{B}}^{\Mp_4}\otimes \exp\angb{\Lambda}{H_{\wt{B}}()}$, where $\rho_B$ is the half-sum of positive roots, i.e. $\rho_B=\beta_1+ \beta_2$. We form the Eisenstein series as before
$$E(\Lambda, \wt{g}, \phi,\wt{B})=\sum_{\gamma\in B(k)\backslash \Mp_4(k)}\Phi_\Lambda(\gamma \wt{g}).$$

Since $\Phi_\Lambda(\wt{g})=\phi(\wt{g}) \exp \angb{\Lambda+\rho_B}{H_{\wt{B}}(\wt{g})}$, we may sometimes write $E(\Lambda, \wt{g}, \Phi_\Lambda,\wt{B})$ for $E(\Lambda, \wt{g}, \phi,\wt{B})$ to emphasize the variation of functions in the representation space $I_{\wt{B}}(\Lambda,\wt{\chi}\boxtimes \wt{\mu})$. We know that the Eisenstein series satisfies the functional equation
$$E(\Lambda, \wt{g}, \Phi_\Lambda, \wt{B})=E(w\Lambda, \wt{g}, M(\Lambda,\wt{\chi}\boxtimes \wt{\mu},w)\Phi_\Lambda, \wt{B})$$
for any Weyl group element $w$. Moreover, the constant term of $E(\Lambda, \wt{g}, \Phi_\Lambda,\wt{B})$ along $\wt{B}$ is given by
$$E_{\wt{B}}(\Lambda, \wt{g}, \Phi_\Lambda,\wt{B})=\sum_{w\in \Omega} M(\Lambda,\wt{\chi}\boxtimes\wt{\mu}, w) \Phi_\Lambda(\wt{g}),$$
where $\Omega$ is the full Weyl group. In general, for $w\in \Omega$ and $f\in I_{\wt{B}}(\Lambda, \wt{\chi}\boxtimes \wt{\mu})$ and sufficiently regular $\Lambda$,
$$M(\Lambda,\wt{\chi}\boxtimes\wt{\mu}, w)f(\wt{g})=\int_{U^w(\A_k)}f(w^{-1}n\wt{g})dn.$$

The symbol $U^w$ represents $U\cap wU^\prime w^{-1}$. It is known that
$$M(\Lambda,\wt{\chi}\boxtimes\wt{\mu}, w)=\otimes A(\Lambda, \wt{\chi}_v\boxtimes\wt{\mu}_v, w), \text{ with } A(\Lambda, \wt{\chi}_v\boxtimes \wt{\mu}_v, w)f_v(\wt{g}_v)= \int_{U^w(k_v)}f_v(w^{-1}n\wt{g}_v)dn,$$
where $f=\otimes f_v$ and $f_v$ is the unique $\Sp_4(\mfr{O}_v)$-fixed function with normalization $f_v(e_v)=1$ for almost all $v$.

For almost all places, Proposition \ref{prop: main} gives the coefficients attached to the local operator $A(\Lambda, \wt{\chi}_v\boxtimes \wt{\mu}_v, w)$. For convenience, for $w\in \Omega$ we normalize $M(\Lambda,\wt{\chi}\boxtimes\wt{\mu}, w)$ as follows. 

First we introduce some notations to be used in the sequel. Consider any Hecke $L$-function attached to a grossencharacter $\theta=\otimes_v \theta_v$ of $k$, and the additive character $\psi=\otimes_v \psi_v$ of $k\backslash \A_k$ given before, we let $\varepsilon(s,\theta)=\prod_v\varepsilon(s,\theta_v,\psi_v)$ denote the usual $\varepsilon$ factor. For simplicity, we may write $\varepsilon(s,\theta_v)=\varepsilon(s,\theta_v,\psi_v)$. Then the functional equation
$$L(s,\theta)=\varepsilon(s,\theta)L(1-s,\theta^{-1})$$
holds. We write
$$\zeta(s)=L(s,\mbf{1})=\frac{a_{-1}}{s-1}+a_0 +a_1 (s-1) + ..., \text{ with } a_{-1}=\Res_{s=1}\zeta(s).$$

Now the normalization is given by
\begin{align*}
r(\Lambda, \wt{\chi}_v\boxtimes \wt{\mu}_v, w) = & \prod_{\substack{\beta> 0, w\beta< 0 \\ \beta \text{ short}}} \frac{L(\angb{\Lambda}{\beta^\vee}, (\chi_v\boxtimes\mu_v) \beta^\vee)}{L(1+\angb{\Lambda}{\beta^\vee}, (\chi_v\boxtimes\mu_v) \beta^\vee)\varepsilon(\angb{\Lambda}{\beta^\vee}, (\chi_v\boxtimes\mu_v) \beta^\vee, \psi_v)} \\
& \cdot \prod_{\substack{\beta> 0, w\beta< 0 \\ \beta \text{ long}}} \frac{L(2\angb{\Lambda}{\beta^\vee}, ((\chi_v\boxtimes\mu_v) \beta^\vee)^2)}{L(1+2\angb{\Lambda}{\beta^\vee}, ((\chi_v\boxtimes\mu_v) \beta^\vee)^2)\varepsilon(2\angb{\Lambda}{\beta^\vee}, ((\chi_v\boxtimes\mu_v) \beta^\vee)^2, \psi_v)} \\
A(\Lambda, \wt{\chi}_v\boxtimes \wt{\mu}_v, w) &=r(\Lambda, \wt{\chi}_v\boxtimes \wt{\mu}_v, w)R(\Lambda, \wt{\chi}_v\boxtimes \wt{\mu}_v, w).
\end{align*}
Regarding the holomorphicity of $R(\Lambda, \wt{\chi}_v\boxtimes \wt{\mu}_v, w)$, we have

\begin{lm}
The normalized operator $R(\Lambda, \wt{\chi}_v\boxtimes \wt{\mu}_v, w)$ is holomorphic for Re$\angb{\Lambda}{\beta}\ne -1 \text{ or }-1/2$ when $\beta$ is short or long, respectively. Here $\beta$ is any positive root that appears in the expression for $R(\Lambda, \wt{\chi}_v\boxtimes \wt{\mu}_v, w)$ above.
\end{lm}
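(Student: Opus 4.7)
The plan is to decompose $A(\Lambda, \wt{\chi}_v\boxtimes \wt{\mu}_v, w)$ compatibly with the product decomposition of $r(\Lambda, \wt{\chi}_v\boxtimes \wt{\mu}_v, w)$ over the positive roots $\beta > 0$ with $w\beta < 0$, thereby reducing the holomorphy of $R$ to the known holomorphy of the normalized rank-one operators in the $\SL_2$ and $\Mp_2$ cases.

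First, I would fix a reduced decomposition $w = w_{i_1}w_{i_2}\cdots w_{i_\ell}$ and use the cocycle relation of intertwining operators to write
$$A(\Lambda, \wt{\chi}_v\boxtimes \wt{\mu}_v, w) = A_\ell \circ \cdots \circ A_2 \circ A_1,$$
where each $A_j$ is a rank-one operator associated to the simple reflection $w_{i_j}$ acting on the inducing data shifted by $w_{i_{j-1}}\cdots w_{i_1}$. The positive roots with $w\beta < 0$ are then precisely $\beta_j = w_{i_1}\cdots w_{i_{j-1}}\alpha_{i_j}$, and $\beta_j$ is short (respectively long) precisely when $\alpha_{i_j}=\alpha_1$ (respectively $\alpha_2$), since the Weyl group preserves root lengths. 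Because $r(\Lambda, \wt{\chi}_v\boxtimes \wt{\mu}_v, w)$ is literally a product over this indexing set, it factors as $r = r_\ell \cdots r_2 r_1$ with each $r_j$ supported on the $j$-th simple reflection, whence
$$R(\Lambda, \wt{\chi}_v\boxtimes \wt{\mu}_v, w) = R_\ell \circ \cdots \circ R_2 \circ R_1, \quad R_j := A_j/r_j,$$
so it suffices to prove holomorphy of each $R_j$ at the stated parameters.

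Next, I would invoke the splitting diagrams from Section 2.3: the rank-one $\SL_2$ inside $\Sp_4(k_v)$ generated by a short root splits trivially in $\Mp_4(k_v)$, while the one generated by a long root inherits the nontrivial double cover $\Mp_2(k_v)$. Hence each $R_j$ is, after a change of variables, the normalized rank-one intertwining operator for $\SL_2$ (short) or for $\Mp_2$ (long), with induction parameter $s_j = \angb{\Lambda}{\beta_j^\vee}$ and character $(\chi_v\boxtimes\mu_v)\beta_j^\vee$. In the short case, the classical Langlands-Shahidi normalization $L(s_j,\cdot)/[L(1+s_j,\cdot)\varepsilon(s_j,\cdot)]$ yields a holomorphic operator away from the zero locus of $L(1+s_j,\cdot)$, which for the $\GL_1$ local factors forces the obstruction to lie on $\re s_j = -1$. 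In the long case, the analogous $\Mp_2$ statement with normalization $L(2s_j,\cdot)/[L(1+2s_j,\cdot)\varepsilon(2s_j,\cdot)]$ is the result recalled from \cite{Szp} Sect.\ 8, and it puts the obstruction on the line $2\re s_j = -1$, i.e.\ $\re s_j = -1/2$. Composing the rank-one statements yields the lemma.

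The main obstacle I anticipate is the bookkeeping in the reduction: one must verify that the rank-one normalizing factor $r_j$, assembled from the shifted datum $w_{i_1}\cdots w_{i_{j-1}}(\chi_v\boxtimes \wt{\mu}_v)$ and the simple coroot $\alpha_{i_j}^\vee$, genuinely recombines into the aggregate factor $r(\Lambda, \wt{\chi}_v\boxtimes \wt{\mu}_v, w)$ written in terms of the original datum and the positive roots $\beta_j^\vee$. This amounts to the $W$-equivariance identity $\angb{w_{i_{j-1}}\cdots w_{i_1}\Lambda}{\alpha_{i_j}^\vee} = \angb{\Lambda}{\beta_j^\vee}$ together with the corresponding compatibility for the characters and their squares, which is formal but must be tracked carefully so that the removal of $r_j$ at each stage matches a factor of the aggregate $r$. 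Apart from this, the only input beyond the standard linear theory is the $\Mp_2$ rank-one normalization statement from \cite{Szp}.
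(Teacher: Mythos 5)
Your proposal is correct and takes essentially the same approach as the paper: the paper's proof likewise factorizes $A(\Lambda,\wt{\chi}_v\boxtimes\wt{\mu}_v,w)$ into rank-one $\SL_2$/$\Mp_2$ pieces along a reduced word (illustrated explicitly for $w=w_{121}$, with a reference to Winarsky for the general mechanism) and observes that the poles of the normalizing factor $r$ cancel those of $A$. The bookkeeping issue you flag — matching the rank-one factors along the reduced word to the index set $\{\beta>0: w\beta<0\}$ of the aggregate $r$ via $W$-equivariance of the pairing — is indeed the only delicate point, and it is exactly what the paper handles implicitly in its worked example.
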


\begin{proof} It follows from the same reasoning as in Lemma \ref{lm: ETF lm}, or modified argument in \cite{Win} . To illustrate, we consider for example the case $w=w_{121}$ in more details and others follows similarly. In this case, parameterize $\Lambda=s\beta_1 + t\beta_2$, then $I_{\wt{B}}(\Lambda, \wt{\chi}_v\boxtimes \wt{\mu}_v)=I_{\wt{B}}(\wt{\chi}_v|\ |^{s+t}\boxtimes \wt{\mu}_v|\ |^t)$ and the operator $A(\Lambda, \wt{\chi}_v\boxtimes \wt{\mu}_v, w_{121})$ factorizes as
\begin{diagram}
I_{\wt{B}}(\wt{\chi}_v|\ |^{s+t}\boxtimes \wt{\mu}_v|\ |^t) &\rTo &I_{\wt{B}}(\wt{\mu}_v|\ |^t \boxtimes \wt{\chi}_v|\ |^{s+t}) &\rTo &I_{\wt{B}}(\wt{\mu}_v|\ |^t \boxtimes \wt{\chi}_v^{-1}|\ |^{-s-t}) &\rTo  &I_{\wt{B}}(\wt{\chi}_v^{-1}|\ |^{-s-t}\boxtimes \wt{\mu}_v|\ |^t).
\end{diagram}
Individually, the simple pole of each map in the chain occur at $s=0$ with $\chi_v=\mu_v$, $s+t=0$ with $\chi_v^2=\mbf{1}$ and $s+2t=0$ with $\chi_v=\mu_v^{-1}$. On the other hand, we assume $\Lambda$ is such that $\angb{\Lambda}{\alpha_1}>-1, \angb{\Lambda}{\alpha_3}>-1$ and $\angb{\Lambda}{\alpha_4}>-1/2$. Then the poles of $r(\Lambda, \wt{\chi}_v\boxtimes \wt{\mu}_v, w_{121})$ are among those of
\begin{align*}
&L(\angb{\Lambda}{\alpha_1^\vee}, (\chi_v\boxtimes\mu_v) \alpha_1^\vee)L(\angb{\Lambda}{\alpha_3^\vee}, (\chi_v\boxtimes\mu_v) \alpha_3^\vee)L(2\angb{\Lambda}{\alpha_4^\vee}, ((\chi_v\boxtimes\mu_v) \alpha_4^\vee)^2)\\
=&L(s,\chi_v\mu_v^{-1})L(s+2t,\chi_v\mu_v)L(2s+2t,\chi_v^2).
\end{align*}

It can be observed that the occurrence of the pole agrees with that of $A(\Lambda, \wt{\chi}_v\boxtimes \wt{\mu}_v, w_{121})$.
\end{proof}

Note also that the normalized operator sends an unramified vector to an unramified one in the codomain space. It satisfies similar cocycle relation $R(\Lambda, \wt{\chi}_v\boxtimes \wt{\mu}_v, w'w)=R(w\Lambda, w(\wt{\chi}_v\boxtimes \wt{\mu}_v), w') R(\Lambda, \wt{\chi}_v\boxtimes \wt{\mu}_v, w)$ for $w, w'$ any Weyl elements with the relation $l(ww')=l(w) + l(w')$. Here $l(w)$ is the total number of $w_1$ and $w_2$ that appear in the reduced decomposition of $w$.

We see that in order to determine the pole of $E_{\wt{B}}(\Lambda, \wt{g}, \Phi_\Lambda,\wt{B})$, it is necessary to consider each $M(\Lambda,\wt{\chi}\boxtimes\wt{\mu}, w)$, and therefore the several hyperplanes in the root space defined as
\begin{equation*}
S_i=
\begin{cases}
\set{\Lambda \in \mfr{a}^*_\C | \angb{\Lambda}{\alpha_i}=1} \text{ if } \alpha_i \text{ is short, } \\
\set{\Lambda \in \mfr{a}^*_\C | \angb{\Lambda}{\alpha_i}=1/2} \text{ if } \alpha_i \text{ is long, }
\end{cases}
\end{equation*}

\noindent where $i=1,2,3,4$. The hyperplanes are drawn in the figure.

\setlength{\unitlength}{2.5cm}
\begin{picture}(4.7,4.7)(-0.3,-0.3)
\put(2,1){\vector(1,0){2}}
\put(2,1){\vector(1,1){2}}
\put(2,1){\vector(0,1){2}}
\put(2,1){\vector(-1,1){2}}
\multiput(3,0)(0,0.2){20}{\line(0,1){0.1}}
\multiput(0,2)(0.2,0){20}{\line(1,0){0.1}}
\multiput(1,1)(0.2,0.2){15}{\line(1,1){0.15}}
\multiput(1,3)(0.2,-0.2){15}{\line(1,-1){0.15}}
\put(4.1,1){$\alpha_1$}
\put(0,3.1){$\alpha_2$}
\put(1.9,3.1){$\alpha_3=\beta_2$}
\put(4.1,3){$\alpha_4$}
\put(3.1,1.8){$\beta_1$}
\put(3,2){\circle*{0.1}}
\put(3,3){\circle*{0.1}}
\put(2,2){\circle*{0.1}}
\put(2.8,0.1){$S_1$}
\put(0.8,1){$S_2$}
\put(0,1.8){$S_3$}
\put(1.1,3){$S_4$}

\put(3.25,3.6){$\Lambda_0$}
\put(3.25,3.5){\circle*{0.03}}
\put(3.20,3.45){\circle*{0.03}}
\put(3.15,3.40){\circle*{0.03}}
\put(3.10,3.37){\circle*{0.03}}
\put(3.05,3.33){\circle*{0.03}}
\put(3.00,3.30){\circle*{0.03}}
\put(2.82,3.33){$\Lambda_1$}
\put(2.95,3.25){\circle*{0.03}}
\put(2.90,3.20){\circle*{0.03}}
\put(2.85,3.15){\circle*{0.03}}
\put(2.82,3.10){\circle*{0.03}}

\put(2.80,3.05){\circle*{0.03}}
\put(2.77,3){\circle*{0.03}}
\put(2.73,2.95){\circle*{0.03}}
\put(2.70,2.90){\circle*{0.03}}
\put(2.67,2.85){\circle*{0.03}}
\put(2.65,2.80){\circle*{0.03}}
\put(2.65,2.75){\circle*{0.03}}
\put(2.63,2.70){\circle*{0.03}}
\put(2.60,2.65){\circle*{0.03}}
\put(2.58,2.60){\circle*{0.03}}
\put(2.37,2.60){$\Lambda_2$}

\put(2.57,2.55){\circle*{0.03}}
\put(2.56,2.50){\circle*{0.03}}
\put(2.55,2.45){\circle*{0.03}}
\put(2.55,2.40){\circle*{0.03}}
\put(2.55,2.35){\circle*{0.03}}
\put(2.53,2.30){\circle*{0.03}}
\put(2.50,2.25){\circle*{0.03}}
\put(2.47,2.20){\circle*{0.03}}
\put(2.45,2.15){\circle*{0.03}}
\put(2.43,2.10){\circle*{0.03}}

\put(2.41,2.05){\circle*{0.03}}
\put(2.40,2.00){\circle*{0.03}}
\put(2.45,2.05){$\Lambda_3$}
\put(2.37,1.95){\circle*{0.03}}
\put(2.35,1.90){\circle*{0.03}}
\put(2.33,1.85){\circle*{0.03}}
\put(2.30,1.80){\circle*{0.03}}
\put(2.30,1.75){\circle*{0.03}}
\put(2.27,1.70){\circle*{0.03}}
\put(2.33,1.72){$\Lambda_4$}
\put(2.25,1.65){\circle*{0.03}}
\put(2.23,1.60){\circle*{0.03}}

\put(2.20,1.55){\circle*{0.03}}
\put(2.20,1.50){\circle*{0.03}}
\put(2.19,1.45){\circle*{0.03}}
\put(2.17,1.40){\circle*{0.03}}
\put(2.15,1.35){\circle*{0.03}}
\put(2.13,1.30){\circle*{0.03}}
\put(2.10,1.25){\circle*{0.03}}
\put(2.08,1.20){\circle*{0.03}}
\put(2.06,1.15){\circle*{0.03}}
\put(2.05,1.10){\circle*{0.03}}
\put(2.03,1.05){\circle*{0.03}}
\put(2.00,1.00){\circle*{0.03}}

\put(2.85,0.9){$v_1$}
\put(2.45,1.33){$v_4$}
\put(1.45,1.33){$v_2$}
\put(1.7,2.05){$v_3$}

\end{picture}

We proceed first with a general discussion of $L^2_\text{d}(\wt{B})$. Consider any entire function $F$ on $\mfr{a}^*_\C$ of Paley-Wiener type such that $F(\Lambda)\in I_{\wt{B}}(\Lambda,\wt{\chi}\boxtimes \wt{\mu})$. According to Langlands theory, $L^2(\wt{B})$ is generated by
$$\Theta_F(\wt{g})=\frac{1}{(2\pi i)^2}\int_{\re \Lambda=\Lambda_0}E(\Lambda,\wt{g},F(\Lambda),\wt{B})d\Lambda,$$
for all such $F(\Lambda)$. Here $\Lambda_0$ satisfies $\angb{\Lambda_0-\rho_B}{\alpha^\vee}>0$ for all positive roots $\alpha$ (cf. \cite{MoW} II.1). In fact in our case for $\Mp_4$, we can infer from the figure that $\rho_B$ here can be replaced by $\alpha_1/2+\alpha_3$. In order to obtain the discrete spectrum $L^2_\text{d}(\wt{B})$, we have to deform the contour from $\re \Lambda=\Lambda_0$ to $\re \Lambda=0$. A priori, the poles of the intertwining operators $M(\Lambda,\wt{\chi}\boxtimes \wt{\mu},w)$ lie on $S_i$ given by real equations, and here we use the curly dotted line to represent the deformation process. The resulting integral at Re$\Lambda=0$,
$$\frac{1}{(2\pi i)^2}\int_{\re \Lambda=0}E(\Lambda,\wt{g},F(\Lambda),\wt{B})d\Lambda,$$
will contribute to the continuous spectrum of $L^2(\wt{B})$ of dimension 2. However, by the theory of several complex variables, such deformation will enable us to pick up first-order residues of the form
$$\frac{1}{2\pi i}\int_{\re \Lambda=\Lambda_i}\text{Res}_{S_i}E(\Lambda,\wt{g},F(\Lambda),\wt{B})d\Lambda,$$
where the variable $\Lambda$ now lies in $S_i$.

Now we can further deform the contour from Re$\Lambda=\Lambda_i$ to Re$\Lambda=v_i$ along the hyperplane $S_i$. Thus the integral at Re$\Lambda=v_i$,
$$\frac{1}{2\pi i}\int_{\re \Lambda=v_i}\text{Res}_{S_i}E(\Lambda,\wt{g},F(\Lambda),\wt{B})d\Lambda,$$
will give the continuous spectrum of $L^2(\wt{B})$ of dimension 1. The square integrable residues that arise from taking residues of the integrand $\text{Res}_{S_i}E(\Lambda,\wt{g},F(\Lambda),\wt{B})$ will span the discrete spectrum $L^2_\text{d}(\wt{B})$. 

From the figure, we see that one only needs to look at
$$\Res_{\beta_1+\frac{\beta_2}{2}} \Res_{S_1} E(\Lambda, \wt{g}, \Phi_\Lambda, \wt{B}), \quad \Res_{\beta_1} \Res_{S_1} E(\Lambda, \wt{g}, \Phi_\Lambda, \wt{B}),$$
$$\Res_\frac{\beta_2}{2} \Res_{S_2} E(\Lambda, \wt{g}, \Phi_\Lambda, \wt{B}), \quad \Res_\frac{\beta_2}{2} \Res_{S_3} E(\Lambda, \wt{g}, \Phi_\Lambda, \wt{B}).$$

To facilitate the computations of above residues, we first prove a useful lemma

\begin{lm}\label{lm: id}
For $\chi=\mu, \chi^2=\mbf{1}$, the equalities $w_1(\alpha_3)=\alpha_3, w_2(\alpha_4)=\alpha_4$ hold. Also the character $\wt{\chi}\boxtimes \wt{\mu}$ is invariant under both $w_1$ and $w_2$. Let $t\in \R$ be any real number, then the two intertwining operators
$$R(t\alpha_3, \wt{\chi}\boxtimes \wt{\mu}, w_1): I_{\wt{B}}(t\alpha_3,\wt{\chi}\boxtimes \wt{\mu}) \longrightarrow I_{\wt{B}}(w_1(t\alpha_4),w_1(\wt{\chi}\boxtimes \wt{\mu}))$$
$$R(t\alpha_4, \wt{\chi}\boxtimes \wt{\mu}, w_2): I_{\wt{B}}(t\alpha_4,\wt{\chi}\boxtimes \wt{\mu}) \longrightarrow I_{\wt{B}}(w_2(t\alpha_4),w_2(\wt{\chi}\boxtimes \wt{\mu}))$$
are both the identity maps.
\end{lm}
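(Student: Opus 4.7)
The plan is first to verify the Weyl symmetries, and then to reduce the identity statement to a rank-one calculation on the $\SL_2$ (for $w_1$) or $\Mp_2$ (for $w_2$) embedded by the short or long root.

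The identities $w_1(\alpha_3)=\alpha_3$ and $w_2(\alpha_4)=\alpha_4$ are immediate from the Weyl table. From $w_1^{-1}t(a,b)w_1=t(b,a)$ and $w_2^{-1}t(a,b)w_2=t(a,b^{-1})$ one has
$$(w_1(\chi\boxtimes\mu))(t(a,b))=\chi(b)\mu(a),\qquad (w_2(\chi\boxtimes\mu))(t(a,b))=\chi(a)\mu(b^{-1}),$$
so the hypothesis $\chi=\mu$ with $\chi^2=\mbf{1}$ is exactly what forces $w_i$-invariance of $\chi\boxtimes\mu$, while $w_i$-stability of the genuine factor $\wt{\chi}_{\psi_v}$ follows from the cocycle computation in \cite{Szp}.

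For the identity assertion, factor $R=\bigotimes_v R_v$ and work place by place. For $w_1$ the short-root $\SL_2$ splits trivially in $\Mp_4(k_v)$, so the cocycle relation for intertwining operators presents $R(t\alpha_3,\wt{\chi}_v\boxtimes\wt{\mu}_v,w_1)$ as the lift of the $\SL_2$ rank-one operator at parameter $\angb{t\alpha_3}{\alpha_1^\vee}=0$ with character $(\chi\mu^{-1})\circ\alpha_1^\vee=\mbf{1}$, i.e.\ $R^{\SL_2}(0,\mbf{1})$. For $w_2$ the long-root copy becomes $\Mp_2(k_v)$; here $\angb{t\alpha_4}{\alpha_2^\vee}=0$ reduces the question to $R^{\Mp_2}(0,\wt{\mu})$ with $\mu^2=\mbf{1}$. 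Using the spherical formulas of Section~2.3 together with the normalizing factor, a short computation yields
$$R^{\SL_2}(0,\mbf{1})f^o=\varepsilon(0,\mbf{1},\psi_v)f'^o=f'^o,\qquad R^{\Mp_2}(0,\wt{\mu})f^o=\varepsilon(0,\mu^2,\psi_v)f'^o=f'^o,$$
so in both cases the normalized rank-one operator fixes the spherical vector; note that the variable $t$ does not enter these reductions.

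To upgrade from the spherical vector to the whole induced space, observe that at these parameters the unramified principal series of $\SL_2$ and $\Mp_2$ are irreducible tempered (the $\Mp_2$ reducibility points being $s=\pm 1/2$, not $s=0$); Schur's lemma then forces the self-intertwiner to be a scalar, pinned down to $1$ by the spherical computation. By holomorphy of $R$ in $t$ the identity persists for all real $t$, and lifting the rank-one identity back gives the assertion for $R(t\alpha_3,\wt{\chi}\boxtimes\wt{\mu},w_1)$ and $R(t\alpha_4,\wt{\chi}\boxtimes\wt{\mu},w_2)$. The principal subtlety is the irreducibility step for $\Mp_2$ at the relevant point; if one prefers to avoid invoking it, the involutive identity $R(w_i\Lambda,w_i(\wt{\chi}\boxtimes\wt{\mu}),w_i)\circ R(\Lambda,\wt{\chi}\boxtimes\wt{\mu},w_i)=1$ coming from $w_i^2=1$, combined with multiplicity-one of the spherical $K$-type in each irreducible subquotient, achieves the same end.
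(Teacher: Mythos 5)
Your reduction to the rank-one $\SL_2/\Mp_2$ case, the verification $\angb{t\alpha_3}{\alpha_1^\vee}=\angb{t\alpha_4}{\alpha_2^\vee}=0$, and the observation that $t$ drops out are all correct and mirror the first step of the paper's proof. But there is a genuine gap in the step that pins down the scalar.

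At a place $v$ where $\chi_v$ (or $\psi_v$) is ramified, or at an archimedean place not carrying a spherical vector, your chain Schur's lemma $\Rightarrow$ scalar, involutivity $R_v^2=\mathrm{id}$ $\Rightarrow$ scalar $=\pm 1$ leaves the sign undetermined, and the spherical computation that selects $+1$ is simply unavailable there. The fallback you offer, involutivity combined with multiplicity one of the spherical $K$-type, has the same defect: at ramified $v$ there is no spherical $K$-type to anchor the sign. So after tensoring you only obtain $R=\bigl(\prod_{v\in S}c_v\bigr)\mathrm{id}$ with each $c_v\in\{\pm 1\}$, and the product of signs is exactly the quantity one needs to control. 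The sign is not innocuous: in the $y=1$ computation along $S_1$ the residue collapses to $R(\alpha_4/2,w_{121})\bigl(\Phi-R(\alpha_4/2,w_2)\Phi\bigr)$, and vanishing requires $R(\alpha_4/2,w_2)=+\mathrm{id}$, not $-\mathrm{id}$.

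The paper avoids this by proving a global statement about the un-normalized operator: $\lim_{s\to 0}M(\chi\boxtimes\chi,w)=-\mathrm{id}$, which combined with the global normalizing factor $\zeta(s)/\zeta(s+1)\to -1$ gives $R=\mathrm{id}$ with no place-by-place sign ambiguity. For the short root this is cited from Keys--Shahidi Prop.~6.3; for the long root (where one genuinely has $\Mp_2$ and Keys--Shahidi does not directly apply) the paper runs a self-contained global argument: the non-constant Fourier coefficients of the rank-one $\Mp_2$ Eisenstein series vanish at $s=0$ by Szpruch, so $E(0,\cdot,\phi)$ equals its constant term $\Phi+M\Phi$, which is $U(\A_k)$-invariant; strong approximation plus the Howe--Moore vanishing of matrix coefficients then forces $E(0,\cdot,\phi)=0$, hence $M=-\mathrm{id}$ and $R=\mathrm{id}$. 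Your local approach could in principle be completed, but it would require an honest local computation of the sign at ramified places (e.g.\ via Whittaker functionals or the precise shape of the local $\gamma$-factor), which is exactly what the global argument lets one avoid.
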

\begin{proof} By inducing in stages
$$I_{\wt{B}}(t\alpha_3,\wt{\chi}\boxtimes \wt{\mu})=I_{\wt{P}_1}(I_{\wt{B}_o}^{\wt{\GL}_2}(|\ |^t\wt{\chi}\boxtimes|\ |^t\wt{\mu})).$$
Since induction commutes with the intertwining operator, one is reduced to the rank one $\wt{\GL}_2$ case. However, the rank one intertwining operator is essentially a $\GL_2$ operator. That is, it suffices to show that
$R(\chi \boxtimes \chi,w): I_{B_o}^{\GL_2}(\chi \boxtimes \chi)\longrightarrow I_{\wt{B}_o}^{\GL_2}(\chi \boxtimes \mu)$
is the identity map, where $w$ is the nontrivial Weyl reflection and $R(\chi \boxtimes \chi,w)$ is the intertwining operator 
$$R(\chi\boxtimes \chi,w)=\lim_{s\to 0}R(s, \chi\boxtimes \chi,w) \text{ with } 
M(s, \chi\boxtimes \chi,w)=\frac{\zeta(s)}{\zeta(s+1)}R(s, \chi\boxtimes \chi,w).$$

However, we know $\lim_{s\to 0}M(\chi\boxtimes \chi,w)=-\text{identity}$ (cf. \cite{KeS} prop. 6.3), which gives $R(\chi\boxtimes \chi,w)=\text{identity}$.

For the second case, we have $$I_{\wt{B}}(t\alpha_4,\wt{\chi}\boxtimes \wt{\mu})=I_{\wt{P}_2}(|\ |^{2t}\wt{\chi}\boxtimes I_{\wt{B}_o}^{\Mp_2}\wt{\mu}).$$
Consider the un-normalized operator
$$M(\wt{\mu}, w): \Ind_{\wt{B}_o}^{\Mp_2}(\wt{\mu}) \longrightarrow \Ind_{\wt{B}_o}^{\Mp_2}(w\wt{\mu}),$$
which appears in the constant term of the rank one $\Mp_2$ Eisenstein series $E_{\wt{B}_o}(s,\wt{g},\phi)$ evaluated at $s=0$. Here $\phi\in I_{\wt{B}_o}(\wt{\mu})$ and $\wt{g}\in \Mp_2$. By \cite{Szp} sect. 9, the non-constant term Fourier coefficient of $E(s,\wt{g},\phi)$ vanishes at $s=0$, which gives $E(0,\wt{g},\phi)=E_{\wt{B}_o}(0,\wt{g},\phi)$, the constant term. .

Suppose the automorphic form $E(0,\wt{g},\phi)$ is nonzero. Clearly, on the subdomain $\wt{B}_o(\A_k)$ it is invariant under the unipotent subgroup $U(\A_k)$, i.e. $E(0,\wt{b} n,\phi)=E(0,\wt{b},\phi)$ for all $\wt{b}\in \wt{B}_o(\A_k)$ and $n\in U(\A_k)$. Note that it is also invariant under $B(k)$. By strong approximation theorem we know that $B_o(k)\backslash \wt{B}_o(\A_k)$ is dense in $\Mp_2(k)\backslash \Mp_2(\A_k)$. Therefore $E(0,\wt{g},\phi)$ is invariant under $U(\A_k)$ on the whole domain, which we assume belongs to $\bigotimes_v\pi_v$.  Then for almost all $v$, $\pi_v$ is an infinite dimensional representation invariant under $U(k_v)$. This violates the Howe-Moore vanishing property at infinity of the matrix coefficients of $\pi_v$ (cf. \cite{HoM}), unless $E(0,\wt{g},\phi)=0$.

It follows that $E(0,\wt{g},\phi)=0$ and
$$M(\wt{\mu}, w)=-\text{identity}=\lim_{s\to 0}\frac{\zeta(2s)}{\zeta(2s+1)}R(\wt{\mu}, w)=-R(\wt{\mu}, w).$$
We also note that this argument in fact works for both cases above. The proof is completed.
\end{proof}

The computation of iterated residues relies on the coordinate system that we use. In what follows, we will set as the basic one the coordinates
\begin{align} \label{cor_xy}
\Lambda=x\alpha_1/2 + y\alpha_3/2.
\end{align}
Therefore, the intertwining operators can be viewed as functions of $x$ and $y$. In view of above lemma, one has immediately

\begin{cor}\label{cor: partial deriv}
With coordinates $\Lambda=x\alpha_1/2 + y\alpha_3/2$. Assume $\chi=\mu, \chi^2=\mbf{1}$, then for all $t\in \R$, the partial derivative $R_y(t\alpha_3, \wt{\chi}\boxtimes \wt{\mu},w_1)$ along the $y$-direction vanishes.
\end{cor}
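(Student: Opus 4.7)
The plan is to obtain the corollary as an immediate consequence of Lemma \ref{lm: id}, with the only work being a careful translation of coordinates.

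First I would re-express the locus $\Lambda=t\alpha_3$ in the coordinate system $\Lambda=x\alpha_1/2+y\alpha_3/2$. This locus corresponds precisely to $x=0$ with $y=2t$; as $t$ ranges over $\mathbf{R}$, this traces out the entire $y$-axis inside $\mathfrak{a}^*_\mathbf{R}$. Consequently, the partial derivative $R_y$ evaluated at a point $\Lambda=t\alpha_3$ is exactly the directional derivative of $R$ along the tangent direction to this locus.

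Next I would invoke Lemma \ref{lm: id}. Under the hypotheses $\chi=\mu$ and $\chi^2=\mathbf{1}$, both the root $\alpha_3$ and the character $\wt{\chi}\boxtimes \wt{\mu}$ are fixed by $w_1$, so for each $t\in\mathbf{R}$ the source $I_{\wt{B}}(t\alpha_3,\wt{\chi}\boxtimes \wt{\mu})$ and the target $I_{\wt{B}}(w_1(t\alpha_3),w_1(\wt{\chi}\boxtimes \wt{\mu}))$ of $R(t\alpha_3,\wt{\chi}\boxtimes \wt{\mu},w_1)$ coincide as the same induced representation. By the lemma, this operator is the identity. Identifying all principal series along the $y$-axis with a fixed space of $K$-finite functions on the maximal compact subgroup of $\Mp_4(\A_k)$, $R(\Lambda,\wt{\chi}\boxtimes \wt{\mu},w_1)$ becomes an operator-valued function of $\Lambda$ whose restriction to the $y$-axis is the constant function equal to the identity.

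Since $R$ is constant along the $y$-axis, its derivative in the $y$-direction at any point of this axis vanishes identically, which is precisely the conclusion $R_y(t\alpha_3,\wt{\chi}\boxtimes \wt{\mu},w_1)=0$. The only subtlety here is the bookkeeping: one must realize the varying induced representations in a single model so that differentiation of $R$ in the parameter $\Lambda$ makes literal sense, and one must confirm that $w_1$-invariance of both $\alpha_3$ and $\wt{\chi}\boxtimes\wt{\mu}$ puts the image space back into the domain. No deeper obstacle is anticipated, as all substantive content has already been established in Lemma \ref{lm: id}.
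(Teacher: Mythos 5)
Your argument is correct and is precisely the reasoning the paper leaves implicit ("In view of above lemma, one has immediately"): the locus $\Lambda=t\alpha_3$ is exactly the $y$-axis $x=0$, along which Lemma \ref{lm: id} forces $R(\Lambda,\wt{\chi}\boxtimes\wt{\mu},w_1)$ to be constantly the identity in the compact-picture realization, so its $y$-derivative vanishes. Nothing to add or correct.
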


\subsection{Residues Along $S_1$}
The singularities of $E(\Lambda, \wt{g}, \Phi_\Lambda, \wt{B})$ are the same as those of its constant term

$$E_{\wt{B}}(\Lambda, \wt{g}, \Phi_\Lambda, \wt{B})=\sum_{w\in W} M(\Lambda,\wt{\chi}\boxtimes \wt{\mu},w)\Phi_\Lambda.$$

In view of Proposition \ref{prop: main} on the intertwining map, we see that $\alpha_i$ appears only for $W_i=\set{w\in W: w\alpha_i <0}$. More specifically we have
$$
W_1=\set{w_1, w_{21}, w_{121}, w_{1212}}, \quad W_2=\set{w_2, w_{12}, w_{212}, w_{1212}},$$
$$W_3=\set{w_{12}, w_{121}, w_{212}, w_{1212}}.$$

Therefore, to compute the residue $\Res_{S_1}E_{\wt{B}}(\Lambda, \wt{g}, \Phi_\Lambda, \wt{B})$ we only need to consider terms $M(\Lambda,\wt{\chi}\boxtimes \wt{\mu}, w), w\in W_1$ and take the residues at the hyperplane $S_1$ and combine over all elements of $W_1$. With respect to the $(x, y)$ coordinates $\Lambda=x\alpha_1/2+y\alpha_3/2$, taking the residue along $S_1$ amounts to considering the residue of intertwining operators when $x$ approaches 1. It follows from Proposition \ref{prop: main} that $M(\Lambda,\wt{\chi}\boxtimes \wt{\mu}, w)$, for all $w\in W_1$, has a pole at $S_1$ only when $(\chi\boxtimes \mu)\alpha_1^\vee=\mbf{1}$, i.e. $\chi=\mu$. This gives

\begin{lm}
We use $\Lambda$ to denote $\alpha_1/2+ y\alpha_3/2$. Then the residues along $S_1$ for the intertwining operators in $E_{\wt{B}}(\Lambda, \wt{g}, \Phi_\Lambda, \wt{B})$ are as follows:
\begin{align*}
&\Res_{S_1}M(\Lambda,\wt{\chi}\boxtimes \wt{\mu}, w_1) =\frac{a_{-1}}{\zeta(2)}R(\Lambda,\wt{\chi}\boxtimes \wt{\mu}, w_1)\\
&\Res_{S_1}M(\Lambda,\wt{\chi}\boxtimes \wt{\mu}, w_{21})= \frac{a_{-1}}{\zeta(2)} \frac{L(y+1,\chi^2)}{L(y+2,\chi^2)\varepsilon(y+1,\chi^2)} R(\Lambda,\wt{\chi}\boxtimes \wt{\mu}, w_{21})\\
&\Res_{S_1}M(\Lambda,\wt{\chi}\boxtimes \wt{\mu}, w_{121}) = \frac{a_{-1}}{\zeta(2)} \frac{L(y,\chi^2)}{L(y+2,\chi^2)\varepsilon(y,\chi^2)\varepsilon(y+1,\chi^2)} R(\Lambda,\wt{\chi}\boxtimes \wt{\mu}, w_{121})\\
&\Res_{S_1}M(\Lambda,\wt{\chi}\boxtimes \wt{\mu}, w_{1212})=\frac{a_{-1}}{\zeta(2)} \frac{L(y-1,\chi^2)}{L(y+2,\chi^2)\varepsilon(y-1,\chi^2)\varepsilon(y,\chi^2)\varepsilon(y+1,\chi^2)} R(\Lambda,\wt{\chi}\boxtimes \wt{\mu}, w_{1212}).
\end{align*}
\end{lm}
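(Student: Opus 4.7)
The plan is to apply the Gindikin--Karpelevich formula of Proposition \ref{prop: main} separately for each $w \in W_1=\{w_1, w_{21}, w_{121}, w_{1212}\}$ and then extract the simple pole along $S_1$ in the coordinates $\Lambda = x\alpha_1/2 + y\alpha_3/2$. First I would read off, from the Weyl action table in Section 2, the set of positive roots flipped by each $w$: for $w_1$ only $\alpha_1$; for $w_{21}$ the pair $\{\alpha_1, \alpha_4\}$; for $w_{121}$ the triple $\{\alpha_1, \alpha_3, \alpha_4\}$; and for $w_{1212}$ all four of $\{\alpha_1, \alpha_2, \alpha_3, \alpha_4\}$. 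Combined with the normalization $A = r\,R$, Proposition \ref{prop: main} writes $M(\Lambda,\wt{\chi}\boxtimes\wt{\mu},w)$ globally as a product of normalizing ratios $L(\cdot)/[L(1+\cdot)\varepsilon(\cdot)]$, one per flipped short (resp.\ long) root, times $R(\Lambda,\wt{\chi}\boxtimes\wt{\mu},w)$.

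Next I would evaluate the relevant pairings and characters. Using that $\beta_1,\beta_2$ are dual to $\alpha_1^\vee,\alpha_2^\vee$ together with $\alpha_3^\vee = \alpha_1^\vee + 2\alpha_2^\vee$ and $\alpha_4^\vee = \alpha_1^\vee + \alpha_2^\vee$, one obtains $\langle \Lambda,\alpha_1^\vee\rangle = x$, $\langle\Lambda,\alpha_3^\vee\rangle = y$, $2\langle\Lambda,\alpha_2^\vee\rangle = y-x$, $2\langle\Lambda,\alpha_4^\vee\rangle = x+y$. The induced characters $(\chi\boxtimes\mu)\alpha_i^\vee$ are $\chi\mu^{-1}, \mu, \chi\mu, \chi$ for $i=1,2,3,4$. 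Along $S_1$ the only ratio that can acquire a pole is the $\alpha_1$ factor $L(x,\chi\mu^{-1})/[L(x+1,\chi\mu^{-1})\varepsilon(x,\chi\mu^{-1})]$, which is simply a pole at $x=1$ precisely when $\chi\mu^{-1} = \mbf{1}$, i.e.\ $\chi = \mu$, with residue $a_{-1}/\zeta(2)$ (using $\varepsilon(1,\mbf{1}) = 1$). All other ratios are holomorphic along $S_1$ and merely specialize.

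Finally I would substitute $x=1$ into the remaining factors, noting that under the assumption $\chi=\mu$ the characters attached to $\alpha_2,\alpha_3,\alpha_4$ all reduce to $\chi^2$ at the relevant squared or linear pairings, and then perform the small telescoping in the $w_{121}$ and $w_{1212}$ cases. Concretely, consecutive factors $L(y+k,\chi^2)/L(y+k+1,\chi^2)$ collapse, leaving only the first numerator, the last denominator, and a chain of intermediate $\varepsilon$ factors; this is exactly the shape of each of the four stated formulas. The only nontrivial point that needs to be checked is that $R(\Lambda,\wt{\chi}\boxtimes\wt{\mu},w)$ is genuinely holomorphic at $x=1$ for each $w\in W_1$, but this is already supplied by the preceding holomorphicity lemma, since the hyperplane $\langle\Lambda,\alpha_1\rangle = 1$ avoids the excluded loci $\langle\Lambda,\beta\rangle \in \{-1,-1/2\}$ for the positive roots $\beta$ involved. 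Beyond this verification, the argument is routine bookkeeping of the short/long distinction and the character assignments.
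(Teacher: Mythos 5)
Your proposal is correct and is the intended (and essentially the only reasonable) derivation: the paper states this lemma without proof, and your computation — applying Proposition \ref{prop: main}, reading off the flipped roots for each $w\in W_1$ from the Weyl action table, evaluating $\langle\Lambda,\alpha_i^\vee\rangle$ and $(\chi\boxtimes\mu)\alpha_i^\vee$, extracting the simple pole at $x=1$ from the $\alpha_1$ factor under $\chi=\mu$, and telescoping the remaining $L$-ratios — reproduces all four stated formulas. The holomorphicity of $R$ at $x=1$, $y\ge 0$ is indeed covered by the preceding lemma, so your check closes the argument.
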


We require $y\ge0$ from deformation for contour integrals of Eisenstein series. It follows that the pole of $\Res_{S_1}M(\Lambda,\wt{\chi}\boxtimes \wt{\mu}, w), w\in W_1$ appears only for the pairs $(\wt{\chi}\boxtimes \wt{\mu}, y)$ of the three types: 

\begin{itemize}
\item[(a)] $\chi=\mu, \chi^2=\mbf{1}, y=2$;

\item[(b)] $\chi=\mu, \chi^2=\mbf{1}, y=1$;

\item[(c)] $\chi=\mu, \chi^2=\mbf{1}, y=0$.
\end{itemize}
\vspace{0.2cm}

 \cu{First case (a), $\chi=\mu, \chi^2=\mbf{1}, y=2$.} In this case $\Lambda=\alpha_1 /2+\alpha_3$, and only $\Res_{S_1}M(\Lambda,\wt{\chi}\boxtimes\wt{\mu}, w_{1212})$ contributes to the pole. Since the induced representations and local intertwining operators $R(\alpha_1/2+\alpha_3,\wt{\chi}_v\boxtimes \wt{\mu}_v, w_{1212})$ are in the Langlands situation, it follows that the image of this local operator is the Langlands quotient $J_{\wt{B}}(\alpha_1/2+\alpha_3,\wt{\chi}_v\boxtimes \wt{\mu}_v)$  with such prescribed data. In fact, we see it is the even Weil representation for the group $\Mp_4(k_v)$. Now we can write 
$$J_{\wt{B}}(\alpha_1/2+\alpha_3,\wt{\chi}\boxtimes \wt{\mu})=\bigotimes_v 
J_{\wt{B}}(\alpha_1/2+\alpha_3,\wt{\chi}_v\boxtimes \wt{\mu}_v).$$

Since $w_{1212}(\alpha_1/2+\alpha_3)=(-\alpha_1)/2+(-\alpha_3)$, by Langlands criterion the residue $\Res_{\Lambda=\alpha_1/2+\alpha_3}\Res_{S_1}E(\Lambda, \wt{g}, \Phi_\Lambda, \wt{B})$, which we identify with $J_{\wt{B}}(\alpha_1/2+\alpha_3,\wt{\chi}\boxtimes \wt{\mu})$, is square integrable. Let $\msc{B}_1\subseteq \msc{A}(\GL_1(\A_k))\times \msc{A}(\GL_1(\A_k))$ be given by
$$\msc{B}_1=\set{(\chi, \chi): \chi^2=\mbf{1}},$$
then $\bigoplus_{(\chi,\mu)\in \msc{B}_1}J_{\wt{B}}(\alpha_1/2+\alpha_3,\wt{\chi}\boxtimes \wt{\mu})$ contributes to the residual spectrum $L_\text{d}^2(\wt{B})$. \\

\cu{Second case (b), $\chi=\mu, \chi^2=\mbf{1}, y=1$.} In this case $\Lambda=\alpha_4/2$, and simple computation gives
\begin{align*}
\Res_{y=1}\Res_{S_1}E_{\wt{B}}(\Lambda, \wt{g}, \phi, \wt{B}) &=\frac{a_{-1}^2}{\zeta(2)\zeta(3)} R(\alpha_4/2,\wt{\chi}\boxtimes \wt{\mu}, w_{121})\Phi_{\alpha_4/2}-\frac{a_{-1}^2}{\zeta(2)\zeta(3)} R(\alpha_4/2,\wt{\chi}\boxtimes \wt{\mu},w_{1212})\Phi_{\alpha_4/2} \\
&= \frac{a_{-1}^2}{\zeta(2)\zeta(3)} \Bigl(R(\alpha_4/2,\wt{\chi}\boxtimes \wt{\mu},w_{121})\Phi_{\alpha_4/2}-R(\alpha_4/2,\wt{\chi}\boxtimes \wt{\mu},w_{1212})\Phi_{\alpha_4/2} \Bigr).
\end{align*}

By the cocycle relation $R(\Lambda,\wt{\chi}\boxtimes \wt{\mu},w_{1212})=R(w_2\Lambda,w_2(\wt{\chi}\boxtimes \wt{\mu}),w_{121})R(\Lambda,\wt{\chi}\boxtimes \wt{\mu},w_2)$, the last term of above equality equals, up to a constant,
$$R(\alpha_4/2, \wt{\chi}\boxtimes\wt{\mu}, w_{121}) \Bigl(\Phi_{\alpha_4/2}- R(\alpha_4/2, \wt{\chi}\boxtimes \wt{\mu}, w_2)\Phi_{\alpha_4/2} \Bigr).$$

Note that this is valid since $w_2(\alpha_4/2)=\alpha_4/2$ and $w_2(\wt{\chi}\boxtimes\wt{\mu})=\wt{\chi}\boxtimes\wt{\mu}$. It follows from Lemma \ref{lm: id} that $\Res_{z=1/2}\Res_{S_1}E_{\wt{B}}(\Lambda, \wt{g}, \phi, \wt{B})=0$, namely that the point $\alpha_4/2$ does not contribute to the discrete spectrum of $L^2_d(\wt{B})$.\\

\cu{Third case (c), $\chi=\mu, \chi^2=\mbf{1}, y=0$.} In this case $\Lambda=\alpha_1/2$, the residue is given by
$$\Res_{y=0}\Res_{S_1}M(\Lambda,\wt{\chi}\boxtimes \wt{\mu}, w_{21})+\Res_{y=0}\Res_{S_1}M(\Lambda,\wt{\chi}\boxtimes \wt{\mu}, w_{121}),$$
 which vanishes. The argument follows the same as for $y=1$. Such vanishing yielded by Lemma \ref{lm: id} confirms the prediction from general theory of Eisenstein series.

\subsection{Residues Along $S_2$}

We consider the possible poles of intertwining operator for $\Lambda$ along $S_2$, which occur only if $2\angb{\Lambda}{\alpha_2^\vee}=1$ and $(\chi \alpha_2^\vee)^2=\mu^2=\mbf{1}$. Set up another coordinates system $(t, z)$ by
\begin{align}\label{cor_tz}
\Lambda=t\alpha_2/2+z\alpha_4/2 \text{ with }
\left( \begin{array}{c}
y\\
x
\end{array} \right)=
\left( \begin{array}{cc}
1 & 1\\
-1 & 1
\end{array} \right)
\left( \begin{array}{c}
t\\
z
\end{array} \right).
\end{align}

The subset $W_2$ of the Weyl group where intertwining operator can a pole at $S_2$ consists of $w_2, w_{12}$, $w_{212}, w_{1212}$, and we have

\begin{lm}
With respect to the coordinates $(t,z)$, at $t=1/2$ the residues along $S_2$ for each intertwining operator in $E_{\wt{B}}(\Lambda, \wt{g}, \Phi_\Lambda, \wt{B})$ are given by:
\begin{align*}
&\Res_{S_2}M(\Lambda,\wt{\chi}\boxtimes \wt{\mu}, w_2) =\frac{a_{-1}}{2\zeta(2)}R(\Lambda,\wt{\chi}\boxtimes \wt{\mu}, w_2)\\
&\Res_{S_2}M(\Lambda,\wt{\chi}\boxtimes \wt{\mu}, w_{12})= \frac{a_{-1}}{2\zeta(2)} \frac{L(z+1/2,\chi\mu)}{L(z+3/2,\chi\mu)\varepsilon(z+1/2,\chi\mu)} R(\Lambda,\wt{\chi}\boxtimes \wt{\mu}, w_{12})\\
&\Res_{S_2}M(\Lambda,\wt{\chi}\boxtimes \wt{\mu}, w_{212}) = \frac{a_{-1}}{2\zeta(2)} \frac{L(z+1/2,\chi\mu)}{L(z+3/2,\chi\mu)\varepsilon(z+1/2,\chi\mu)}\frac{L(2z,\chi^2)}{L(2z+1,\chi^2)\varepsilon(2z,\chi^2)} R(\Lambda,\wt{\chi}\boxtimes \wt{\mu}, w_{212})\\
&\Res_{S_2}M(\Lambda,\wt{\chi}\boxtimes \wt{\mu}, w_{1212})= \frac{a_{-1}}{2\zeta(2)} \frac{L(z-1/2,\chi\mu)L(2z,\chi^ 2)}{L(z+3/2,\chi\mu)L(2z+1,\chi^2)}\frac{R(\Lambda,\wt{\chi}\boxtimes \wt{\mu}, w_{1212})}{\varepsilon(z-1/2,\chi\mu)\varepsilon(z+1/2,\chi\mu)\varepsilon(2z,\chi^2)} .
\end{align*}
\end{lm}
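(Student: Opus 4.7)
The plan is to proceed exactly as in the preceding lemma for $S_1$: for each Weyl element $w\in W_2=\{w_2,w_{12},w_{212},w_{1212}\}$ (the elements sending $\alpha_2$ to a negative root), I will write out the normalizer $r(\Lambda,\wt{\chi}_v\boxtimes\wt{\mu}_v,w)$ explicitly via the Gindikin--Karpelevich formula of Proposition \ref{prop: main}, evaluate its residue at the hyperplane $S_2$ in the $(t,z)$-coordinates \eqref{cor_tz}, and multiply by the holomorphic $R(\Lambda,\wt{\chi}\boxtimes\wt{\mu},w)$.

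The first concrete step is to read off from the table in Section~2 the set $\{\beta>0:w\beta<0\}$ for each $w\in W_2$: for $w_2$ this is $\{\alpha_2\}$; for $w_{12}$ it is $\{\alpha_2,\alpha_3\}$; for $w_{212}$ it is $\{\alpha_2,\alpha_3,\alpha_4\}$; for $w_{1212}$ it is all of $\{\alpha_1,\alpha_2,\alpha_3,\alpha_4\}$. I will combine this with the identifications $(\chi\boxtimes\mu)\alpha_1^\vee=\chi\mu^{-1}$, $(\chi\boxtimes\mu)\alpha_2^\vee=\mu$, $(\chi\boxtimes\mu)\alpha_3^\vee=\chi\mu$, $(\chi\boxtimes\mu)\alpha_4^\vee=\chi$ and the pairings $\angb{\Lambda}{\alpha_1^\vee}=z-t,\ \angb{\Lambda}{\alpha_2^\vee}=t,\ \angb{\Lambda}{\alpha_3^\vee}=t+z,\ \angb{\Lambda}{\alpha_4^\vee}=z$ to expand $r$ as a product of $L$- and $\varepsilon$-factors in $t$ and $z$.

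Next I will take the residue at $t=1/2$. The only potentially singular factor along $S_2$ is the long-root contribution $L(2t,\mu^2)/[L(2t+1,\mu^2)\varepsilon(2t,\mu^2)]$ coming from $\alpha_2$, which is present for every $w\in W_2$; this forces $\mu^2=\mathbf{1}$ and yields, via $\zeta(s)=\tfrac{a_{-1}}{s-1}+a_0+\cdots$, the universal prefactor
\[
\Res_{t=1/2}\frac{L(2t,\mathbf{1})}{L(2t+1,\mathbf{1})\varepsilon(2t,\mathbf{1})}=\frac{a_{-1}/2}{\zeta(2)\cdot 1}=\frac{a_{-1}}{2\zeta(2)},
\]
using that $\varepsilon(1,\mathbf{1})=1$ globally. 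The remaining factors in $r$ are holomorphic at $t=1/2$, and substituting $t=1/2$ into them produces the asserted $z$-dependent coefficients: a bare $1$ for $w_2$; the single short-root factor $L(z+1/2,\chi\mu)/[L(z+3/2,\chi\mu)\varepsilon(z+1/2,\chi\mu)]$ for $w_{12}$; that factor together with the long-root $\alpha_4$-factor for $w_{212}$; and for $w_{1212}$ one more short-root factor from $\alpha_1$, whose numerator $L(z-1/2,\chi\mu^{-1})$ is written as $L(z-1/2,\chi\mu)$ because $\mu^2=\mathbf{1}$, and whose denominator cancels against the numerator of the $\alpha_3$-factor at $t=1/2$, leaving the collapsed expression displayed in the lemma.

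There is no serious analytic obstacle here; the lemma is purely computational, and the result follows from Proposition \ref{prop: main} combined with the residue calculation at $t=1/2$. The only thing to watch carefully is the bookkeeping for $w_{1212}$: four factors appear simultaneously, and one must verify both the cancellation of $L(z+1/2,\chi\mu)$ between the $\alpha_1$ and $\alpha_3$ contributions and the correct assembly of the three $\varepsilon$-factors in the denominator. No new ingredients beyond those developed for the $S_1$ computation are needed.
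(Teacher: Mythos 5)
Your computation is correct and is precisely the argument the paper intends (the paper states the lemma as a direct computation without writing it out). You read off the sets $\{\beta>0: w\beta<0\}$ correctly from the table, the pairings $\langle\Lambda,\alpha_1^\vee\rangle=z-t$, $\langle\Lambda,\alpha_2^\vee\rangle=t$, $\langle\Lambda,\alpha_3^\vee\rangle=z+t$, $\langle\Lambda,\alpha_4^\vee\rangle=z$ are right, the pushforward characters $\chi\mu^{-1},\mu,\chi\mu,\chi$ along the four coroots are right, and the residue of the universal $\alpha_2$-factor at $t=1/2$ is $\tfrac{a_{-1}}{2\zeta(2)}$ (with $\mu^2=\mathbf{1}$ forced), matching the paper's tacit normalization $\varepsilon(1,\mathbf{1})=1$ which you flagged. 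The only delicate point, the $w_{1212}$ term, you handled correctly: after substituting $t=1/2$ and using $\mu^{-1}=\mu$, the $\alpha_1$-denominator $L(z+1/2,\chi\mu)$ cancels the $\alpha_3$-numerator, and the three surviving $\varepsilon$-factors assemble in the denominator exactly as displayed. Nothing is missing.
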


From the figure, it follows that we need to consider the two cases where $\Res_{S_2}M(\Lambda,\wt{\chi}\boxtimes \wt{\mu}, w), w\in W_2$ may possess a pole:

\begin{itemize}
\item[(i)] $\chi\ne \mu, \chi^2=\mu^2=\mbf{1}, z=1/2$;

\item[(ii)] $\chi=\mu, \chi^2=\mbf{1}, z=1/2$.
\end{itemize}
\vspace{0.2cm}

\cu{First case (i), $\chi\ne \mu, \chi^2=\mu^2=\mbf{1}, z=1/2$.} One simple pole occurs in this case and the residue of constant term along $\wt{B}$ is, up to scalar, given by
\begin{align*}
&\frac{L(1,\chi\mu)}{L(2,\chi\mu)\varepsilon(1,\chi\mu)}R(\alpha_3/2,\wt{\chi}\boxtimes \wt{\mu},w_{212})\Phi_{\alpha_3/2}+
\frac{L(0,\chi\mu)}{L(2,\chi\mu)\varepsilon(0,\chi\mu)\varepsilon(1,\chi\mu)}R(\alpha_3/2,\wt{\chi}\boxtimes \wt{\mu}, w_{1212})\Phi_{\alpha_3/2}\\
&= \frac{L(1,\chi\mu)}{L(2,\chi\mu)\varepsilon(1,\chi\mu)} \Bigl(R(\alpha_3/2,\wt{\chi}\boxtimes \wt{\mu},w_{212})\Phi_{\alpha_3/2}+R(\alpha_3/2,\wt{\chi}\boxtimes \wt{\mu}, w_{1212})\Phi_{\alpha_3/2}\Bigr).
\end{align*}

To determine the image of the intertwining operator $R(\alpha_3/2,\wt{\chi}\boxtimes \wt{\mu},w_{212})+R(\alpha_3/2,\wt{\chi}\boxtimes \wt{\mu}, w_{1212})$, by the cocycle relation
$$R(\Lambda,\wt{\chi}\boxtimes \wt{\mu},w_{1212})=R(w_{212}\Lambda,w_{212}(\wt{\chi}\boxtimes \wt{\mu}),w_1)R(\Lambda,\wt{\chi}\boxtimes \wt{\mu}, w_{212})$$ we see locally

\begin{diagram}
I_{\wt{B}}(\alpha_3/2,\wt{\chi}_v\boxtimes \wt{\mu}_v)  &\rTo^{R(w_{212})}  &I_{\wt{B}}(-\alpha_3/2,\wt{\mu}_v\boxtimes \wt{\chi}_v) &\rTo^{R(w_1)} &I_{\wt{B}}(-\alpha_3/2,\wt{\chi}_v\boxtimes \wt{\mu}_v) \\
 & \rdOnto & \uInto\\
 & &J_{\wt{B}}(\alpha_3/2, \Ind_{\wt{B}}^{\wt{\GL}_2}\wt{\chi}_v\boxtimes \wt{\mu}_v).
\end{diagram}

Here the operator $R(w_1)$ is an isomorphism. Therefore, we see that by taking the sum of two iterated residues one obtains a representation space isomorphic to $\bigotimes_v J_{\wt{B}}(\alpha_3/2, \Ind_{\wt{B}}^{\wt{\GL}_2}\wt{\chi}_v\boxtimes \wt{\mu}_v)$, which we write as $J_{\wt{B}}(\alpha_3/2,\wt{\chi}\boxtimes \wt{\mu})$. Let $\msc{B}_2\subseteq \msc{A}(\GL_1(\A_k))\times \msc{A}(\GL_1(\A_k))$ given by
$$\msc{B}_2=\set{(\chi, \mu): \chi^2=\mu^2=\mbf{1}, \chi\ne \mu}.$$
Since $w_{212}(\alpha_3/2)=w_{1212}(\alpha_3/2)=-\alpha_3/2$, by Langlands criterion, we know $J_{\wt{B}}(\alpha_3/2,\wt{\chi}\boxtimes \wt{\mu})$ is square integrable. Therefore $\bigoplus_{(\chi,\mu)\in \msc{B}_2}J_{\wt{B}}(\alpha_3/2,\wt{\chi}\boxtimes \wt{\mu})$ contributes to $L_\text{d}^2(\wt{B})$.\\

\cu{Second case (ii), $\chi=\mu, \chi^2=\mbf{1}, z=1/2$.} The last three residues in the lemma all have poles, and we will obtain explicit form of $\Res_{\alpha_3/2}\Res_{S_2}E_{\wt{B}}(\Lambda, \wt{g}, \Phi_\Lambda, \wt{B})$. We will see that the residue obtained at $z=1/2$ does not contribute to the discrete spectrum $L^2_\text{d}(\wt{B})$. In fact, it follows from the computation along $S_3$ later that it will be canceled by $\Res_{\alpha_3/2}\Res_{S_3}E_{\wt{B}}(\Lambda, \wt{g}, \Phi_\Lambda, \wt{B})$.

Note that the character $\wt{\chi}\boxtimes \wt{\mu}$ in this case is invariant under $w_1$ and $w_2$, for simplicity we may omit writing $\wt{\chi}\boxtimes \wt{\mu}$ repeatedly for the intertwining operators. First we consider the last two terms in above lemma.

\begin{prop}
Write as before for the number field $k$ its associated zeta function $\zeta(s)=\frac{a_{-1}}{s-1}+a_0 +a_1(s-1)+ ...,$ then with respect to the $(t, z)$ coordinate system in (\ref{cor_tz}) we have

\begin{align*}
&\Res_{z=1/2}\Res_{S_2}M(\Lambda,\wt{\chi}\boxtimes \wt{\mu}, w_{212})+\Res_{z=1/2}\Res_{S_2}M(\Lambda,\wt{\chi}\boxtimes \wt{\mu}, w_{1212})\\
=&\frac{a_{-1}^2}{2\zeta^3(2)}\Big( a_0 R(\alpha_3/2,w_{212})-\frac{a_{-1}}{2}R_x(-\alpha_3/2,w_1)R(\alpha_3/2,w_{212}) \Big).
\end{align*}
Here $R_x(-\alpha_3/2,w_1)$ is the partial derivative along $x$ direction of operator $R(\Lambda, \wt{\chi}\boxtimes \wt{\mu},w_1)$. The coordinates $(x, y)$ for $\Lambda$ are given by (\ref{cor_xy}).
\end{prop}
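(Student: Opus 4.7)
The plan is to work in the coordinate $u = z - 1/2$ and combine the two residues through the cocycle relation, so that the operator derivatives are controlled by Lemma \ref{lm: id} and Corollary \ref{cor: partial deriv}.

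First, I would specialize all local factors using $\chi = \mu$ and $\chi^2 = \mathbf{1}$, giving $\chi\mu = \mathbf{1}$ and (trivial) $\varepsilon$-factors equal to $1$, so the coefficients $c_{212}(z), c_{1212}(z)$ of the two terms take the form
\begin{equation*}
c_{212}(z)=\tfrac{a_{-1}}{2\zeta(2)}\cdot\tfrac{\zeta(z+1/2)\zeta(2z)}{\zeta(z+3/2)\zeta(2z+1)},\qquad c_{1212}(z)=\tfrac{a_{-1}}{2\zeta(2)}\cdot\tfrac{\zeta(z-1/2)\zeta(2z)}{\zeta(z+3/2)\zeta(2z+1)}.
\end{equation*}
Crucially, since $\zeta$ denotes the completed zeta (which has poles at both $s=0$ and $s=1$), the functional equation $\zeta(s)=\zeta(1-s)$ gives the Laurent data $\zeta(1+u) = a_{-1}/u + a_0 + O(u)$ and $\zeta(u)= -a_{-1}/u + a_0 + O(u)$ near $u=0$, so the $1/u^2$ double poles of $c_{212}$ and $c_{1212}$ cancel in their sum, leaving
\begin{equation*}
C(z):=c_{212}(z)+c_{1212}(z)=\tfrac{a_{-1}^2 a_0}{2\zeta^3(2)}\cdot\tfrac{1}{u}+O(1).
\end{equation*}

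Next, I would apply the cocycle $R(\Lambda,w_{1212})=R(w_{212}\Lambda,w_1)R(\Lambda,w_{212})$ and write
\begin{equation*}
c_{212}(z)R_{212}(z)+c_{1212}(z)R_{1212}(z)=\Bigl[C(z)+c_{1212}(z)\bigl(A(z)-\mathrm{Id}\bigr)\Bigr]R_{212}(z),
\end{equation*}
where $A(z)=R(w_{212}\Lambda|_{t=1/2},w_1)$ and $R_{212}(z)=R(\Lambda|_{t=1/2},w_{212})$. In the $(x,y)$-coordinates one has $w_{212}\Lambda|_{t=1/2}=-\alpha_3/2+u(\alpha_1/2-\alpha_3/2)$, so Lemma \ref{lm: id} gives $A(0)=\mathrm{Id}$ and Corollary \ref{cor: partial deriv} kills the $y$-derivative, yielding $A(z)=\mathrm{Id}+u\,R_x(-\alpha_3/2,w_1)+O(u^2)$.

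Finally, I would extract the residue at $u=0$. The piece $C(z)R_{212}(z)$ contributes $C^{(-1)}\cdot R(\alpha_3/2,w_{212})$ (no derivative of $R_{212}$ since $C$ now has only a simple pole), giving the $\frac{a_{-1}^2 a_0}{2\zeta^3(2)}R$-term. The piece $c_{1212}(z)(A(z)-\mathrm{Id})R_{212}(z)=u\,c_{1212}(z)R_x(-\alpha_3/2,w_1)R_{212}(z)+O(u^2)c_{1212}(z)\cdot R_{212}(z)$ is handled by computing $\mathrm{Res}_{u=0}\bigl(u\,c_{1212}(z)\bigr)$, which equals the $1/u^2$-coefficient of $c_{1212}(z)$, namely $-a_{-1}^3/(4\zeta^3(2))$, producing the $-\frac{a_{-1}^3}{4\zeta^3(2)}R_x(-\alpha_3/2,w_1)R$-term; the $O(u^2)c_{1212}(z)$ term is regular and contributes nothing.

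The main delicacy will be organizing the Laurent expansions so that the cancellation of $1/u^2$ poles is transparent and identifying $b_0=a_0$ (where $b_0$ is the constant term of $\zeta$ at $s=0$) as a direct consequence of the functional equation—this is what turns the raw coefficient $(a_0+b_0)/4$ into the advertised $a_0/2$. Once that bookkeeping is in place, the proposition follows by collecting the two surviving pieces.
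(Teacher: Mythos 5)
Your proof is correct and takes essentially the same approach as the paper: both use the Laurent data of the completed zeta at $s=0,1$ (via the functional equation), the cocycle relation $R(\Lambda,w_{1212})=R(w_{212}\Lambda,w_1)R(\Lambda,w_{212})$, and Lemma \ref{lm: id} / Corollary \ref{cor: partial deriv} to reduce the $w_1$-factor to $\mathrm{Id}+u\,R_x(-\alpha_3/2,w_1)+O(u^2)$. The only substantive difference is organizational: the paper Taylor-expands $R(\Lambda,w_{212})$ and $R(\Lambda,w_{1212})$ individually, producing $R_x(\alpha_3/2,w_{212})+R_y(\alpha_3/2,w_{212})$ terms whose cancellation one then observes after summing, whereas you factor $R_{212}(z)$ out up front, writing the sum as $C(z)R_{212}(z)+c_{1212}(z)(A(z)-\mathrm{Id})R_{212}(z)$; this makes the cancellation of the $1/u^2$ poles and of the $R_{212}$-derivative terms (and, implicitly, of the $\zeta'(2)$-contributions from the denominators) transparent from the start, at the cost of one extra algebraic step. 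Both routes land on the same two surviving pieces, and your remark identifying $b_0=a_0$ via $\zeta(s)=\zeta(1-s)$ is exactly the bookkeeping underlying the paper's displayed expansions $\zeta(z\pm\tfrac12)\zeta(2z)$.
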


\begin{proof}
Under the second case (ii), all the characters $\chi\mu, \chi^2$ that appear in the residues are trivial. Therefore, we are dealing with zeta functions. We can get easily
$$\zeta(z+\frac{1}{2})\zeta(2z)=\frac{a_{-1}^2}{2(z-\frac{1}{2})^2} + \frac{3a_0a_{-1}}{2(z-\frac{1}{2})} +...,\text{ and }
\zeta(z-\frac{1}{2})\zeta(2z)=-\frac{a_{-1}^2}{2(z-\frac{1}{2})^2} - \frac{a_0a_{-1}}{2(z-\frac{1}{2})} +....$$
Moreover, from $\Lambda=\alpha_2/4+z\alpha_4/2$ we can write the Taylor expansion
$$R(\Lambda,w_{212})=R(\alpha_3/2,w_{212})+ \Big( R_x(\alpha_3/2,w_{212})+R_y(\alpha_3/2,w_{212})\Big)(z-1/2) + ...$$
and
$$R(w_{212}(\Lambda),w_1)=R(-\alpha_3/2,w_1)+ \Big( R_x(-\alpha_3/2,w_1)- R_y(-\alpha_3/2,w_1)\Big)(z-1/2) + ....$$
Now Lemma \ref{lm: id} gives that $R(-\alpha_3/2,w_1)$ is the identity map and Corollary \ref{cor: partial deriv} that $R_y(-\alpha_3/2,w_1)$ is zero. Hence cocycle relation gives the Taylor expansion
$$R(\Lambda,w_{1212})=R(\alpha_3/2,w_{212})+\Big(R_x(\alpha_3/2,w_{212})+R_y(\alpha_3/2,w_{212})+
R_x(-\alpha_3/2,w_1)R(\alpha_3/2,w_{212})\Big)(z-1/2)+ ....$$

Now the Proposition follows easily from explicit computations of the Taylor expansion of
$M(\Lambda,\wt{\chi}\boxtimes \wt{\mu}, w_{212})$ and $M(\Lambda,\wt{\chi}\boxtimes \wt{\mu}, w_{1212})$.
\end{proof}

\begin{cor}\label{S_2 nonsq}
With respect to the $(x,y)$ coordinate system, we have the iterated residue
$$\Res_{\alpha_3/2}\Res_{S_2}E_{\wt{B}}(\Lambda, \wt{g}, \Phi_\Lambda, \wt{B})=\frac{a_{-1}^2}{\zeta(2)^2}R(\alpha_3/2,w_{12})+\frac{a_{-1}^2}{\zeta(2)^3}\Big( a_0R(\alpha_3/2,w_{212})-\frac{a_{-1}^2}{2} R_x(-\alpha_3/2,w_1)R(\alpha_3/2,w_{212})\Big).$$
This residue is not square integrable and therefore does not contribute to the residual spectrum $L^2_\text{d}(\wt{B})$.
\end{cor}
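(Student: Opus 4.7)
The plan is two-fold: first, to combine the four contributions indexed by $W_2=\set{w_2,w_{12},w_{212},w_{1212}}$ into the explicit formula for $\Res_{\alpha_3/2}\Res_{S_2}E_{\wt{B}}$; second, to invoke the Langlands square integrability criterion on the resulting exponents.

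For the explicit formula, I would take each of the four expressions $\Res_{S_2}M(\Lambda,\wt{\chi}\boxtimes\wt{\mu},w)$ from the preceding lemma and apply $\Res_{z=1/2}$ in the $(t,z)$ coordinates. The $w_2$ summand is holomorphic in $z$ at $z=1/2$ and therefore contributes nothing. The $w_{12}$ summand, with $\chi=\mu$ and $\chi^2=\mbf{1}$, reduces to $\frac{a_{-1}}{2\zeta(2)}\cdot\frac{\zeta(z+1/2)}{\zeta(z+3/2)}R(\Lambda,w_{12})$, whose $z=1/2$ residue equals $\frac{a_{-1}^2}{2\zeta(2)^2}R(\alpha_3/2,w_{12})$. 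The $w_{212}$ and $w_{1212}$ contributions are supplied by the preceding proposition. Passing from $(t,z)$ to the $(x,y)$ coordinate system through the change-of-variables in (\ref{cor_tz}) introduces a Jacobian factor of $2$ that multiplies each surviving summand and yields the displayed identity in the corollary.

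For the non-square-integrability claim, I would apply the Langlands criterion (cf. \cite{MoW} I.4) to the $\wt{B}$-constant term of the residue. The exponents appearing in the formula are $w_{12}(\alpha_3/2)=-\alpha_1/2$ from the first summand and $w_{212}(\alpha_3/2)=-\alpha_3/2$ from the remaining three summands; in particular, $w_1$ fixes $-\alpha_3/2$ by the Weyl action table, so the $R_x(-\alpha_3/2,w_1)$ derivative contributes only polynomial growth along $H_{\wt{B}}(\wt{g})$ without altering the exponential weight. These two exponents are distinct as characters of $\wt{T}(\A_k)$, so no cancellation between summands is possible. Now $-\alpha_1/2=-\frac{1}{2}\alpha_1+0\cdot\alpha_2$ has vanishing $\alpha_2$-coefficient and thus lies on the boundary, not strictly inside, the open negative Weyl chamber; Langlands' strict negativity requirement fails, and hence the first summand obstructs square integrability of the full residue.

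The main obstacle is the scalar bookkeeping in the first step, namely tracking the Jacobian factor from the coordinate change $(t,z)\to(x,y)$ and ensuring consistency with the Taylor expansions in the preceding proposition. Once the explicit formula is in hand, the Langlands criterion check is a short computation resting on the observation that $-\alpha_1/2$ is not strictly interior to the negative chamber, so the $R(\alpha_3/2,w_{12})$ exponential cannot be cancelled by the $-\alpha_3/2$-exponents arising from the other three summands.
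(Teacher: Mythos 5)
Your computation of the explicit formula is correct and follows the paper's route: the $w_2$ term has no pole in $z$, the $w_{12}$ term contributes $\frac{a_{-1}^2}{2\zeta(2)^2}R(\alpha_3/2,w_{12})$ in the $(t,z)$ coordinates, the $w_{212}$ and $w_{1212}$ terms come from the preceding proposition, and the change of variables $(t,z)\to(x,y)$ introduces the Jacobian factor $\det K=2$.

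However, your argument for the non-square-integrability has a genuine gap. You correctly observe that the two exponents $-\alpha_1/2$ and $-\alpha_3/2$ are distinct, so the constant term of the residue splits into two components that cannot cancel one another, and that $-\alpha_1/2=-\tfrac12\alpha_1+0\cdot\alpha_2$ fails the Langlands strict negativity criterion. But from this you conclude that ``the first summand obstructs square integrability'' without ever ruling out the possibility that, for some input function $f$, one has $R(\alpha_3/2,w_{12})f=0$ while $R(\alpha_3/2,w_{212})f\ne 0$. In that scenario the residue would be nonzero with sole exponent $-\alpha_3/2=-\tfrac12\alpha_1-\tfrac12\alpha_2$, which \emph{does} lie in the open negative cone, so that residue \emph{would} be square integrable and your argument would not apply. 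The paper closes exactly this gap with the cocycle relation
$$R(\alpha_3/2,\wt{\chi}\boxtimes\wt{\mu},w_{212})=R(w_{12}(\alpha_3/2),w_2)\,R(\alpha_3/2,\wt{\chi}\boxtimes\wt{\mu},w_{12}),$$
which shows that $R(\alpha_3/2,w_{12})f=0$ forces $R(\alpha_3/2,w_{212})f=0$ (and likewise for $w_{1212}$), so the entire iterated residue vanishes. Hence on any $f$ producing a nonzero residue, the $-\alpha_1/2$ component is genuinely present, and only then does the Langlands criterion give non-square-integrability. Without this step, your ``distinct exponents, no cancellation'' observation is necessary but not sufficient.
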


\begin{proof}
First, note that with respect to the $(t,z)$ coordinates $$\Res_{\alpha_3/2}\Res_{S_2}M(\Lambda, \wt{\chi}\boxtimes \wt{\mu}, w_{12})=\frac{a_{-1}^2}{2\zeta(2)^2}R(\alpha_3/2,w_{12}).$$
Now the transformation of coordinate systems from $(t, z)$ to $(x, y)$ is given by the Jacobian matrix
$$K=\left( \begin{array}{cc}
1 & 1\\
-1 & 1
\end{array} \right).$$
Therefore, the residue computed in the $(x,y)$ system is $\det K$ times the one in the $(t,z)$ system. Thus the result follows from previous Proposition.

For the last assertion, we use the cocycle relation $R(\alpha_3/2,w_{212})=R(w_{12}(\alpha_3/2),w_2)R(\alpha_3/2,w_{12})$. Let $f\in I_{\wt{B}}(\alpha_3/2,\wt{\chi}\boxtimes \wt{\mu})$. If $\Res_{z=1/2}\Res_{S_2}M(\Lambda,\wt{\chi}\boxtimes \wt{\mu}, w_{12})=0$, then
$$\Res_{z=1/2}\Res_{S_2}M(\Lambda,\wt{\chi}\boxtimes \wt{\mu}, w_{212})f+\Res_{z=1/2}\Res_{S_2}M(\Lambda,\wt{\chi}\boxtimes \wt{\mu}, w_{1212})f=0.$$

This refutes the square integrability of the iterated residue of Eisenstein series since $w_{12}(\alpha_3/2)=-\alpha_1/2$. The proof is completed.
\end{proof}

Remark. We will see that such non-square integrable function will be canceled by the residue computed from considering the hyperplane $S_3$.\\

\subsection{Residues Along $S_3$}
Now we compute the iterated residue of Eisenstein series at $\alpha_3$ along the hyperplane $S_3$. We use the original $(x,y)$ coordinate system with $\Lambda=x\alpha_1/2 + y\alpha_3/2$. Then $S_3$ being a singular hyperplane is equivalent to $y=1$ and $(\chi\boxtimes \mu)\alpha_3^\vee=\mbf{1}$, i.e. $\chi=\mu^{-1}$.

Further more, on $S_3$, the point $\alpha_3$ is a singular point for $\Res_{S_2}E_{\wt{B}}(\Lambda, \wt{g}, \Phi_\Lambda, \wt{B})$, to which we deform and get a dimension one continuous spectrum by integration over the imaginary axis. By general residue theory of complex variable (cf. \cite{Ahl} chap. 4 sec. 5), the residue obtained at $\alpha_3/2$ will be \emph{half} the full residue of $\Res_{S_3}E_{\wt{B}}(\Lambda, \wt{g}, \Phi_\Lambda, \wt{B})$ at $\alpha_3/2$. By full residue we mean the coefficient for the leading term in Laurent expansion of $\Res_{S_3}E_{\wt{B}}(\Lambda, \wt{g}, \Phi_\Lambda, \wt{B})$ at $\alpha_3/2$ with respect to $x$.

Recall that $W_3=\set{w: w\alpha_3<0}=\set{w_{12},w_{121},w_{212},w_{1212}}$. Now a simple computation gives

\begin{lm}
With respect to the coordinates $(x,y)$, residues at $y=1$, i.e. along $S_3$, for intertwining operators in $E_{\wt{B}}(\Lambda, \wt{g}, \phi, \wt{B})$ are given by:
\begin{align*}
&\Res_{S_3}M(\Lambda,\wt{\chi}\boxtimes \wt{\mu}, w_{12}) =\frac{a_{-1}}{\zeta(2)} \frac{L(1-x,\mu^2)}{L(2-x,\mu^2)\varepsilon(1-x,\mu^2)} R(\Lambda,\wt{\chi}\boxtimes \wt{\mu}, w_{12})\\
&\Res_{S_3}M(\Lambda,\wt{\chi}\boxtimes \wt{\mu}, w_{121})= \frac{a_{-1}}{\zeta(2)} \frac{L(x,\chi\mu^{-1})}{L(1+x,\chi\mu^{-1})\varepsilon(x,\chi\mu^{-1})}
\frac{L(1+x,\chi^2)}{L(2+x,\chi^2)\varepsilon(1+x,\chi^2)}
R(\Lambda,\wt{\chi}\boxtimes \wt{\mu}, w_{12})\\
&\Res_{S_3}M(\Lambda,\wt{\chi}\boxtimes \wt{\mu}, w_{212}) = \frac{a_{-1}}{\zeta(2)} \frac{L(1-x,\mu^2)}{L(2-x,\mu^2)\varepsilon(1-x,\mu^2)}
\frac{L(1+x,\chi^2)}{L(2+x,\chi^2)\varepsilon(1+x,\chi^2)}
R(\Lambda,\wt{\chi}\boxtimes \wt{\mu}, w_{212})\\
&\Res_{S_3}M(\Lambda,\wt{\chi}\boxtimes \wt{\mu}, w_{1212})= \frac{a_{-1}}{\zeta(2)} \frac{L(x,\chi\mu^{-1})}{L(1+x,\chi\mu^{-1})\varepsilon(x,\chi\mu^{-1})}
\frac{L(1-x,\mu^2)L(1+x,\chi^2)}{L(2-x,\mu^2)\varepsilon(1-x,\mu^2)}
\frac{R(\Lambda,\wt{\chi}\boxtimes \wt{\mu}, w_{212})}{L(2+x,\chi^2)\varepsilon(1+x,\chi^2)}.
\end{align*}

Here $\Lambda=x\alpha_1/2+\alpha_3/2$.
\end{lm}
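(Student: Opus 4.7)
The strategy is to apply the Gindikin-Karpelevich formula (Proposition~\ref{prop: main}) to each $w \in W_3 = \set{w_{12}, w_{121}, w_{212}, w_{1212}}$ in turn, use the normalization $A = r \cdot R$ to factor out the ratio of $L$- and $\varepsilon$-factors, and then extract the residue at $y=1$ under the specialization $\chi = \mu^{-1}$ forced by the requirement that $S_3$ be a singular hyperplane.

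First I would read off from the Weyl-action table which positive roots $\beta$ satisfy $w\beta < 0$: namely $\set{\alpha_2, \alpha_3}$ for $w_{12}$, $\set{\alpha_1, \alpha_3, \alpha_4}$ for $w_{121}$, $\set{\alpha_2, \alpha_3, \alpha_4}$ for $w_{212}$, and all four positive roots for $w_{1212}$. In the coordinates $\Lambda = x\alpha_1/2 + y\alpha_3/2$ the coroot pairings and coroot characters to be fed into Proposition~\ref{prop: main} are
\[
\angb{\Lambda}{\alpha_1^\vee} = x,\ \angb{\Lambda}{\alpha_2^\vee} = (y-x)/2,\ \angb{\Lambda}{\alpha_3^\vee} = y,\ \angb{\Lambda}{\alpha_4^\vee} = (x+y)/2,
\]
together with $(\chi\boxtimes\mu)\alpha_i^\vee = \chi\mu^{-1},\mu,\chi\mu,\chi$ for $i=1,2,3,4$, and with the long roots $\alpha_2,\alpha_4$ contributing the doubled argument and squared character. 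Substituting these values into the formula gives the $r$-coefficient attached to each of the four Weyl elements.

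Second, the singularity along $S_3$ comes entirely from the $\alpha_3$-factor: on the specialization $\chi\mu = \mbf{1}$ (that is, $\chi = \mu^{-1}$) the factor $L(y,\chi\mu)/\bigl(L(y+1,\chi\mu)\varepsilon(y,\chi\mu)\bigr)$ becomes $\zeta(y)/\zeta(y+1)$, which has residue $a_{-1}/\zeta(2)$ at $y=1$ (using $\varepsilon(1,\mbf{1}) = 1$). All other $L$- and $\varepsilon$-factors in the $r$-coefficient are regular at $y=1$ in the strip determined by the contour deformation, and one simply sets $y = 1$ in them. Running this substitution through the four coefficients yields the four formulas stated; for example, in the $w_{121}$ case the $\alpha_1$-factor leaves $L(x,\chi\mu^{-1})/\bigl(L(x+1,\chi\mu^{-1})\varepsilon(x,\chi\mu^{-1})\bigr)$ and the $\alpha_4$-factor leaves $L(x+1,\chi^2)/\bigl(L(x+2,\chi^2)\varepsilon(x+1,\chi^2)\bigr)$ after the substitution $y = 1$.

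No genuine obstacle arises; the work is careful bookkeeping. The two pitfalls to watch are: first, keeping the short/long distinction straight so that the long roots contribute $L(2\angb{\Lambda}{\beta^\vee},((\chi\boxtimes\mu)\beta^\vee)^2)$ and the short ones $L(\angb{\Lambda}{\beta^\vee},(\chi\boxtimes\mu)\beta^\vee)$; and second, tracking the $\varepsilon$-factors correctly, since they are trivial only on the locus where the relevant character is trivial. The same template is what has been used for the analogous lemmas along $S_1$ and $S_2$ earlier in this section.
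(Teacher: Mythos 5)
Your plan is correct and follows the only natural route for this lemma, which the paper itself dispatches with no more than ``a simple computation gives.''  The coroot pairings $\angb{\Lambda}{\alpha_1^\vee}=x$, $\angb{\Lambda}{\alpha_2^\vee}=(y-x)/2$, $\angb{\Lambda}{\alpha_3^\vee}=y$, $\angb{\Lambda}{\alpha_4^\vee}=(x+y)/2$ and the restricted characters $\chi\mu^{-1},\mu,\chi\mu,\chi$ along $\alpha_1^\vee,\dots,\alpha_4^\vee$ are right, as are the sets $\set{\beta>0:w\beta<0}$ read off from the table; feeding these into Proposition~\ref{prop: main}, peeling off the $\alpha_3$-factor $\zeta(y)/\bigl(\zeta(y+1)\varepsilon(y,\mbf{1})\bigr)$ whose residue at $y=1$ on $\chi\mu=\mbf{1}$ is $a_{-1}/\zeta(2)$, and evaluating the remaining regular factors at $y=1$ gives exactly the stated coefficients.  (The claim $\varepsilon(1,\mbf{1})=1$ is consistent with the paper's completed $L$-functions, for which the functional equation of $\zeta$ has trivial $\varepsilon$-factor.)

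One point worth flagging: running your computation literally produces $R(\Lambda,\wt{\chi}\boxtimes\wt{\mu},w_{121})$ in the second line and $R(\Lambda,\wt{\chi}\boxtimes\wt{\mu},w_{1212})$ in the fourth, whereas the lemma as printed has $R(\cdot,w_{12})$ and $R(\cdot,w_{212})$ there.  This is not something your method could or should reproduce without further input: the cocycle reduction $R(\Lambda,w_{121})=R(w_{12}\Lambda,w_1)R(\Lambda,w_{12})$ collapses to $R(\Lambda,w_{12})$ only if $R(w_{12}\Lambda,w_1)$ is the identity, and Lemma~\ref{lm: id} gives that only on the line $\R\alpha_3$ with $\chi=\mu$, $\chi^2=\mbf{1}$, not for general $x$ on $S_3$.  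Comparison with the analogous $S_2$ lemma (where the $R$ does match the $M$ in every line) confirms these are typos in the paper; your approach yields the corrected formulas, so there is no gap in your argument.
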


We see that the meromorphic function $\Res_{S_3}E_{\wt{B}}(\Lambda, \wt{g}, \Phi_\Lambda, \wt{B})$ has possibly a pole at $\alpha_3/2$ or equivalently $x=0$ only when $\chi=\mu, \chi^2=\mbf{1}$. In this case, all $L$-functions are Dedekind zeta functions. Therefore

\begin{prop}
For $\chi=\mu, \chi^2=\mbf{1}$, we have the iterated residue
$$\Res_{\alpha_3/2}\Res_{S_3}E_{\wt{B}}(\Lambda, \wt{g}, \phi, \wt{B})=-\frac{a_{-1}^2}{\zeta(2)^2}R(\alpha_3/2,w_{12})+\frac{a_{-1}^2}{\zeta(2)^3}\Big( \frac{a_{-1}^2}{2} R_x(-\alpha_3/2,w_1)R(\alpha_3/2,w_{212})-
a_0R(\alpha_3/2,w_{212})\Big).$$
In view of Corollary \ref{S_2 nonsq}, we see that under the above condition, there is no residual spectrum generated at $\alpha_3/2$.
\end{prop}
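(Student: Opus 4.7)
The plan is to proceed in direct parallel with the $S_2$ analysis of Corollary \ref{S_2 nonsq}: compute $\Res_{\alpha_3/2}\Res_{S_3}E_{\wt{B}}$ by Laurent expanding each of the four terms $\Res_{S_3}M(\Lambda,w)$ for $w\in W_3=\{w_{12},w_{121},w_{212},w_{1212}\}$ from the preceding lemma in the variable $x$ near $x=0$ (with $y=1$ already imposed from the $S_3$ residue), and reading off the coefficient of $x^{-1}$. Under the hypothesis $\chi=\mu,\chi^2=\mbf{1}$, every $L$-factor collapses to a Dedekind zeta $\zeta(s)$ of $k$, so the problem reduces to comparing Laurent coefficients of products of shifted $\zeta$'s against Taylor coefficients of the normalized intertwining operators.

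First I would dispense with the easy terms. A direct inspection shows that $\Res_{S_3}M(w_{121})$ is holomorphic at $x=0$, since the $\zeta(1+x)$ in its numerator cancels the one in its denominator, leaving the regular quotient $\zeta(x)/\zeta(2+x)$; hence $w_{121}$ contributes nothing. For $w_{12}$ the only singularity is the simple pole of $\zeta(1-x)$, and using $\Res_{x=0}\zeta(1-x)=-a_{-1}$ together with $\zeta(2-x)\to\zeta(2)$ one obtains directly the first summand $-\frac{a_{-1}^2}{\zeta(2)^2}R(\alpha_3/2,w_{12})$ after multiplication by the prefactor $a_{-1}/\zeta(2)$.

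The heart of the argument is the joint treatment of $w_{212}$ and $w_{1212}$. Both share the factor $\zeta(1-x)/[\zeta(2-x)\zeta(2+x)]$ (modulo $\varepsilon$-normalizations) and differ only by $\zeta(1+x)R(\Lambda,w_{212})$ versus $\zeta(x)R(\Lambda,w_{1212})$. Applying the cocycle identity
$$R(\Lambda,w_{1212}) = R(w_{212}\Lambda,w_1)\,R(\Lambda,w_{212})$$
rewrites their sum as $[\zeta(1+x)\,\mathrm{id}+\zeta(x)R(w_{212}\Lambda,w_1)]R(\Lambda,w_{212})$. Taylor expanding $R(w_{212}\Lambda,w_1)$ about $\Lambda=\alpha_3/2$ and noting that $w_{212}(\alpha_3/2)=-\alpha_3/2$ from the table, Lemma \ref{lm: id} gives $R(-\alpha_3/2,w_1)=\mathrm{id}$ and Corollary \ref{cor: partial deriv} kills the $y$-derivative, so that this Taylor expansion reduces to $\mathrm{id}+R_x(-\alpha_3/2,w_1)x+O(x^2)$. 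Combined with $\zeta(1\pm x)=\pm a_{-1}/x+a_0\mp a_1x+\cdots$ and $\zeta(x)=\zeta(0)+\zeta'(0)x+\cdots$, and taking into account the half-residue convention at the corner point $\alpha_3/2$ noted in the text preceding the proposition, the $x^{-1}$ coefficient of the full product assembles into the second summand $\frac{a_{-1}^2}{\zeta(2)^3}\bigl(\tfrac{a_{-1}^2}{2}R_x(-\alpha_3/2,w_1)R(\alpha_3/2,w_{212})-a_0R(\alpha_3/2,w_{212})\bigr)$.

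The principal obstacle is the careful bookkeeping of cross-terms. The pure $x^{-2}$ pole of $\zeta(1-x)\zeta(1+x)$ has vanishing $x^{-1}$ coefficient, so the surviving simple-pole contributions come from first-order Taylor pieces of the operator factor crossed with subleading Laurent pieces of the $\zeta$-products; exactly here Lemma \ref{lm: id} and Corollary \ref{cor: partial deriv} intervene to collapse every operator derivative into either $R_x(-\alpha_3/2,w_1)R(\alpha_3/2,w_{212})$ or $R(\alpha_3/2,w_{212})$. For the final assertion, a term-by-term comparison with Corollary \ref{S_2 nonsq} gives $\Res_{\alpha_3/2}\Res_{S_3}E_{\wt{B}}=-\Res_{\alpha_3/2}\Res_{S_2}E_{\wt{B}}$, so the two iterated residues at $\alpha_3/2$ cancel and no square-integrable representation is generated, confirming that $\alpha_3/2$ contributes nothing to $L^2_\text{d}(\wt{B})$.
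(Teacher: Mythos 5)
Your overall plan — Laurent-expand each $\Res_{S_3}M(\Lambda,w)$ for $w\in W_3$ in $x$ at $x=0$, use the cocycle identity together with Lemma \ref{lm: id} and Corollary \ref{cor: partial deriv} to collapse the operator Taylor expansions, and then observe the exact cancellation with Corollary \ref{S_2 nonsq} — is the right plan and matches the paper's method. Your treatment of the $w_{212}+w_{1212}$ pair is in line with the paper's argument, and the final cancellation observation is also correct.

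However, there is a genuine gap in your treatment of the $w_{12}$ and $w_{121}$ terms. You assert that $\Res_{S_3}M(\Lambda,w_{121})$ is holomorphic at $x=0$ ``since the $\zeta(1+x)$ in its numerator cancels the one in its denominator, leaving the regular quotient $\zeta(x)/\zeta(2+x)$.'' This overlooks the $\varepsilon$-factor $\varepsilon(x,\chi\mu^{-1})$ sitting in the normalization: since the paper works with completed $L$-functions, $L(x,\mathbf{1})/\varepsilon(x,\mathbf{1})=L(1-x,\mathbf{1})=\zeta(1-x)$, which has a simple pole at $x=0$ with residue $-a_{-1}$. So after cancelling $\zeta(1+x)$ the $w_{121}$ term is $\tfrac{a_{-1}}{\zeta(2)}\cdot\tfrac{\zeta(1-x)}{\zeta(2+x)\varepsilon(1+x)}R(\Lambda,\cdot)$, which \emph{does} have a simple pole at $x=0$. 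Together with the cocycle relation and Lemma \ref{lm: id} (giving $R(\alpha_3/2,w_{121})=R(\alpha_3/2,w_{12})$), this produces a second copy of $-\tfrac{a_{-1}^2}{2\zeta(2)^2}R(\alpha_3/2,w_{12})$ exactly as in the paper. Your error is then masked by a second, compensating mistake: you do not apply the half-residue convention to the $w_{12}$ term (you report the full residue $-a_{-1}^2/\zeta(2)^2$ rather than half of it), while you do invoke it for the $w_{212},w_{1212}$ pair. The two slips cancel numerically in the first summand, but the bookkeeping is inconsistent and the dismissal of $w_{121}$ is wrong; with both terms treated correctly, the half of $(w_{12}+w_{121})$-contributions gives the stated $-\tfrac{a_{-1}^2}{\zeta(2)^2}R(\alpha_3/2,w_{12})$.
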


\begin{proof} The proof relies on similar computation as in the $S_2$ case, and we may refrain from writing the character for the intertwining operators since it is invariant $w_1, w_2$.

First, we have 
$$\Res_{x=0}\Res_{S_3}M(\Lambda,\wt{\chi}\boxtimes \wt{\mu}, w_{12})=-\frac{a_{-1}^2}{2\zeta(2)^2}R(\alpha_3/2, w_{12}).$$
Also, by apply the cocycle condition and Lemma \ref{lm: id}, we get
\begin{align*}
\Res_{x=0}\Res_{S_3}M(\Lambda,\wt{\chi}\boxtimes \wt{\mu}, w_{12})& =-\frac{a_{-1}^2}{2\zeta(2)^2}R(\alpha_3/2, w_{121})\\
&=-\frac{a_{-1}^2}{2\zeta(2)^2}R(\alpha_3/2, w_{12}).
\end{align*}

For the last two terms, we first consider the zeta functions that contribute to the poles. Note that
\begin{align*}
\frac{\zeta(x)}{\zeta(1+x)}& =\frac{-\frac{a_{-1}}{x}+a_0-a_1x+...}{-\frac{a_{-1}}{x}+a_0-a_1x+...}\\
& =-1 +\frac{2a_0}{a_{-1}}x+....
\end{align*}

Now the zeta functions that contribute to the pole for the last two intertwining operators are respectively
$$\zeta(1-x)\zeta(1+x)=\frac{-a_{-1}^2}{x^2}+ \frac{0}{x}+..., \text{ and } \frac{\zeta(x)}{\zeta(1+x)}\zeta(1-x)\zeta(1+x)=\frac{a_{-1}^2}{x^2}-\frac{2a_0a_{-1}}{x}+....$$

For the normalized intertwining operator, we can write
\begin{align*}
& R(\Lambda, w_{212})=R(\alpha_3/2, w_{212})+xR_x(\alpha_3/2, w_{212}) +...,\\
& R(w_{212}\Lambda, w_1)=R(-\alpha_3/2, w_1)+xR_x(-\alpha_3/2, w_1) +...,
\end{align*}
which gives
$$R(\Lambda, w_{1212})=R(\alpha_3/2, w_{212})+x\Big( R_x(\alpha_3/2, w_{212})+ R_x(-\alpha_3/2, w_1)R(\alpha_3/2, w_{212}) \Big) +...$$

Now the proposition follows from combining the Taylor expansions, and the consideration that the actual residue $\Res_{x=0}\Res_{S_3}M(\Lambda,\wt{\chi}\boxtimes \wt{\mu}, w_{212})$ and $\Res_{x=0}\Res_{S_3}M(\Lambda,\wt{\chi}\boxtimes \wt{\mu}, w_{12})$ are only half of the coefficient attached to the $1/x$ term in the Taylor expansions of $\Res_{S_3}M(\Lambda,\wt{\chi}\boxtimes \wt{\mu}, w_{212})$ and $\Res_{S_3}M(\Lambda,\wt{\chi}\boxtimes \wt{\mu}, w_{12})$ respectively.
\end{proof}

\subsection{Conclusion}
To summarize,
\begin{thm}\label{thm: borel}
Keep notations as above, it follows from Langlands theory of Eisenstein series that the discrete spectrum $L^2_\text{d}(\wt{B})$ associated to the Borel subgroup has the decomposition
$$L^2_\text{d}(\wt{B})=\bigoplus_{(\chi,\mu)\in \msc{B}_1}J_{\wt{B}}(\alpha_1/2+\alpha_3,\wt{\chi}\boxtimes \wt{\mu}) \oplus \bigoplus_{(\chi,\mu)\in \msc{B}_2}J_{\wt{B}}(\alpha_3/2,\wt{\chi}\boxtimes \wt{\mu}).$$
\end{thm}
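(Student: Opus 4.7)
The plan is to assemble Theorem~\ref{thm: borel} by collating the iterated-residue computations of the three preceding subsections. Recall that Langlands's contour-shift argument expresses $L^2_\text{d}(\wt{B})$ as the span of all square-integrable iterated residues of $E(\Lambda,\wt{g},\Phi_\Lambda,\wt{B})$ picked up when deforming $\Re\Lambda=\Lambda_0$ down to $\Re\Lambda=0$. From the figure the possible contributing points are exactly the four listed at the end of the general discussion:
\[
\alpha_1/2+\alpha_3,\quad \alpha_4/2\text{ on } S_1,\qquad \alpha_3/2\text{ on } S_2,\qquad \alpha_3/2\text{ on } S_3.
\]

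First I would dispose of the $S_1$ residues. Running through cases (a)--(c): in case (a) only $\Res_{S_1} M(\Lambda,\wt{\chi}\boxtimes\wt{\mu},w_{1212})$ survives, its image is the Langlands quotient $J_{\wt{B}}(\alpha_1/2+\alpha_3,\wt{\chi}\boxtimes\wt{\mu})$, and square-integrability follows from the Langlands criterion since $w_{1212}(\alpha_1/2+\alpha_3)=-\alpha_1/2-\alpha_3$. This produces the $\msc{B}_1$ summand. In cases (b) and (c), the cocycle relation $R(\Lambda,w_{1212})=R(w_2\Lambda,w_{121})R(\Lambda,w_2)$ together with Lemma~\ref{lm: id} (which makes $R(\alpha_4/2,\wt{\chi}\boxtimes\wt{\mu},w_2)=\mathrm{id}$ and similarly for $w_1$ at $\alpha_1/2$) shows the pairwise sums of the residues cancel, so no contribution arises.

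Next I would handle $S_2$. In case (i) the computation in the $S_2$ subsection gives, up to a nonzero scalar, the sum $R(\alpha_3/2,w_{212})+R(\alpha_3/2,w_{1212})$, whose image equals $\bigotimes_v J_{\wt{B}}(\alpha_3/2,\mathrm{Ind}_{\wt{B}}^{\wt{\GL}_2}\wt{\chi}_v\boxtimes\wt{\mu}_v)$ via the isomorphism $R(w_1)$, and the Langlands criterion applies because $w_{212}(\alpha_3/2)=-\alpha_3/2$. This yields the $\msc{B}_2$ summand. In case (ii) Corollary~\ref{S_2 nonsq} shows the iterated residue is not square integrable, so it cannot lie in $L^2_\text{d}(\wt{B})$; we must verify it is actually absent from the discrete spectrum rather than merely failing the criterion. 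This is the step I expect to be the main obstacle.

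The resolution is the $S_3$ computation. Specializing the $S_3$ lemma to $\chi=\mu$ with $\chi^2=\mbf{1}$, comparing with Corollary~\ref{S_2 nonsq}, and using the cocycle and identity-operator facts from Lemma~\ref{lm: id} and Corollary~\ref{cor: partial deriv}, one finds that $\Res_{\alpha_3/2}\Res_{S_3}E_{\wt{B}}+\Res_{\alpha_3/2}\Res_{S_2}E_{\wt{B}}=0$ (bearing in mind the factor of $1/2$ from the half-residue along the singular hyperplane). Hence the non-square-integrable piece from $S_2$ case (ii) is cancelled in the full contour-shift bookkeeping by the $S_3$ contribution, and no genuine residual representation is produced along the diagonal locus $\chi=\mu$ other than those already captured by $\msc{B}_1$. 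Collecting all surviving contributions yields
\[
L^2_\text{d}(\wt{B})=\bigoplus_{(\chi,\mu)\in\msc{B}_1} J_{\wt{B}}(\alpha_1/2+\alpha_3,\wt{\chi}\boxtimes\wt{\mu})\;\oplus\;\bigoplus_{(\chi,\mu)\in\msc{B}_2} J_{\wt{B}}(\alpha_3/2,\wt{\chi}\boxtimes\wt{\mu}),
\]
as asserted.
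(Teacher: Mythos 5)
Your proposal correctly assembles the theorem from the preceding residue computations in the same way the paper does: the $\msc{B}_1$ summand from $S_1$ case (a), the $\msc{B}_2$ summand from $S_2$ case (i), cancellation of $S_1$ cases (b)--(c) via the cocycle relation and Lemma~\ref{lm: id}, and the exact $S_2$--$S_3$ cancellation at $\alpha_3/2$ with the half-residue factor taken into account. One small slip in the parenthetical: Lemma~\ref{lm: id} does not give $R(\alpha_1/2,w_1)=\mathrm{id}$; in case (c) the operator that the lemma makes the identity is $R(-\alpha_3/2,w_1)$, which arises because $w_{21}(\alpha_1/2)=-\alpha_3/2$ lies on the $t\alpha_3$ axis covered by the lemma.
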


\section{Interpretation with Arthur Parameters}
\subsection{The Arthur Conjecture for Metaplectic Groups}

In this section, we depict coarsely how the the residual spectrum computed can be interpreted in the framework of Arthur's conjecture. More precisely, each residue that has appeared in $L^2_\text{res}$ will contribute to one near equivalent class associated to an Arthur parameter $\psi$. We call two automorphic representations are near equivalent if and only if they are isomorphic for almost all places.

Since the emphasis in this paper is not, even if possible, the proof of Arthur conjectures on metaplectic groups, the following sketchy description is not completely unjustified. However, with further work, for example on the construction of local A-packets and proof of full near equivalence class, one can expect to make further progress on the conjecture. \\

We recall that a discrete A-parameter $\psi$ valued in the Langlands dual group $\Sp_{2n}(\C)$ of $\Mp_{2n}$ is
$$\psi: L_k\times \SL_2(\C) \to \Sp_{2n}(\C),$$
which takes the form
$$\psi=\phi_{n_1}\boxtimes S_{r_1} \oplus ... \oplus \phi_{n_k}\boxtimes S_{r_k}.$$
Here $L_k$ is the conjectural automorphic Langlands group. However, we do not need the precise knowledge on it for the Arthur conjecture. For a parameter $\psi$, we require that
\begin{itemize}
\item[(a)] $\phi_{n_i}$ is an irreducible $n_i$-dimensional representation of $L_k$;
\item[(b)] $S_{r_i}$ is the $r_i$-dimensional irreducible representation of $\SL_2(\C)$;
\item[(c)] $\sum_i n_i r_i=2n$;
\item[(d)] $L(s,\phi_{n_i},\bigwedge^2)$ has a pole at $s=1$ if $r_i$ is odd, whereas $L(s,\phi_{n_i},\text{Sym}^2)$ has a pole at $s=1$ if $r_i$ is even;
\item[(e)] $\psi$ is called discrete if it is multiplicity-free, i.e. $(r_i,\phi_{n_i})\ne (r_j,\phi_{n_j})$ if $i\ne j$. It is called tempered if $r_i=1$ for all $i$.
\end{itemize}

We can assign a global component group to every discrete A-parameter $\psi$
$$\mca{S}_\psi=\prod_{i=1}^k \Z/2\Z a_i,$$
which is a vector space over $\Z/2\Z$ with distinguished basis index by the summands of $\psi$. Furthermore, every parameter $\psi$ will give rise to local components by considering the composition
$$\psi_v: L_{k_v}\times \SL(\C) \hookrightarrow L_{k_v}\times \SL(\C) \longrightarrow \Sp_{2n}(\C),$$
where $L_{k_v}=\text{WD}_v$, the Weil-Deligne group for $k_v$. We define the local component group
$$\mca{S}_{\psi_v}=\pi_0(\text{Cent}_{\Sp_{2n}(\C)}(\psi_v)),$$
to which one can attach conjecturally a finite set
$$A_{\psi_v}=\set{\pi_{\xi_v}: \xi_v\in \text{Irr}(\mca{S}_{\psi_v})}$$
indexed by the irreducible characters of the finite component group $A_{\psi_v}$. One of the required conditions for $A_{\psi_v}$ is that: for almost all $v$, $\pi_{\xi_v}$ is irreducible and unramified if $\xi_v=\mbf{1}_v$. In this case, $\pi_{\mbf{1}_v}$ is the unramified representation whose Satake parameter is given by
$$s_{\psi_v}=\psi_v\left( \text{Fr}_v \times  \left(\begin{array}{cc} |\varpi_v|^{1/2} & \\ & |\varpi_v|^{-1/2} \end{array}\right) \right),$$
where $\text{Fr}_v$ and $\varpi_v$ denote the the Frobenius element and local uniformizer for the field $k_v$. In fact, this condition will enable us to decide the parameter $\psi$, associated to which the residual spectrum computed can be interpreted as near equivalent class.

Based on the (conjectural) knowledge of the local packet, we may form the global A-packet
$$A_\psi=\set{\pi_\xi=\otimes \pi_{\xi_v}: \xi=\otimes \xi_v\in \text{Irr}(\mca{S}_{\psi,\A})},$$
which are unitary representations indexed by irreducible characters $\xi$ of the compact group $\mca{S}_{\psi,\A}:=\prod_v \mca{S}_{\psi_v}$. Now the Arthur conjecture can be stated as\\

\textsc{Conjecture}. Denote by $\Delta$ the diagonal map $\mca{S}_\psi \to \mca{S}_{\psi,\A}$. To an A-parameter $\psi$ we can associate a quadratic character $\epsilon_\psi: \mca{S}_\psi \to \set{\pm}$, such that we can define the representation space
$$L_\psi^2=\bigoplus_\xi \angb{\Delta^*(\xi)}{\epsilon}_{\mca{S}_\psi}\pi_\xi,$$
where $\xi$ is taken over all characters of $\mca{S}_{\psi,\A}$. Then the discrete spectrum of $\Mp_{2n}$, when $\psi$ ranges over all A-parameters, has the decomposition
$$L_\text{disc}^2(\Mp_{2n})= \bigoplus_\psi L_\psi^2.$$

\subsection{Residual Spectrum and Near Equivalence Classes}
Now we specialize to the case $n=2$ and interpret the residual spectrum of $\Mp_4$ as near equivalent classes attached to proper A-parameters. Certainly above description of Arthur conjecture provides ample information and redundancy for this purpose, since an analysis of the Satake parameter in each case will be sufficient to give the answer. For our purpose, we have the following types of discrete A-parameters $\psi: L_k\to \Sp_4(\C)$ as from \cite{Art2}.\\

A) \textbf{Tempered case}. Either $\psi=\phi$ (stable case) or $\psi=\phi\oplus \phi^\prime$ (unstable), where for the latter $\phi$ and $\phi^\prime$ are irreducible 2-dimensional representations. This case will be impertinent to our residual spectrum.\\

B) \textbf{Soudry type}, $\psi=\phi \boxtimes S_2$, where $\phi$ is an irreducible representation of such that $L(s, \phi, \text{Sym}^2)$ has a pole at 1. Suppose $\phi$ corresponds to a cuspidal representation $\tau$ of $\GL_2$. For almost all $v$, we know $\pi_{\mbf{1}_v}$ is unramified with Satake parameter

$$\left( \begin{array}{cccc}
\chi_v(\varpi_v)|\varpi_v|^{1/2} & & &\\
 & \mu_v(\varpi_v)|\varpi_v|^{1/2} & &\\
 & & \chi_v(\varpi_v)|\varpi_v|^{-1/2} &\\
&&& \mu_v(\varpi_v)|\varpi_v|^{-1/2}
\end{array} \right),$$
where $\tau_v=\Ind \chi_v\boxtimes \mu_v$ is unramified representation. Here $\varpi_v$ any chosen uniformizer of the local field $k_v$. One see that for almost all $v$, we expect  $\pi_{\mbf{1}_v}\cong J_{\wt{P}_1}(1/2,\wt{\tau}_v)$ and therefore the representation $J_{\wt{P}_1}(1/2,\wt{\tau})$ in Theorem \ref{thm: siegel} belongs to the near equivalence class associated to an A-parameter of Soudry type.\\

C) \textbf{Saito-Kurokawa type}. In this case, we have $\psi=\phi_2\boxtimes S_1\oplus \chi\boxtimes S_2, \chi^2=\mbf{1}$. Let $\pi=\otimes \pi_v$ be the cuspidal representation of $\PGL_2$ that corresponds to $\phi_2$.

For almost all $v$, the local representation $\pi_v=\Ind\mu_v\boxtimes \mu_v^{-1}$ is unramified and therefore $\pi_{\mbf{1}_v}$ has Satake parameter
$$\left( \begin{array}{cccc}
\chi_v(\varpi_v)|\varpi_v|^{1/2} & & &\\
 & \mu_v(\varpi_v) & &\\
 & & \mu_v^{-1}(\varpi_v)&\\
&&& \chi_v(\varpi_v)|\varpi_v|^{-1/2}
\end{array} \right).$$

Therefore, for almost all $v$ one expects $\pi_{\mbf{1}_v}\cong J_\pi(1/2,\wt{\chi}_v\boxtimes \sigma_v)$ and thus $J_\pi(1/2,\wt{\chi}\boxtimes \sigma)$ in Theorem \ref{thm: non-siegel} belongs to the near equivalence class with respect to such $\psi$.\\

D) \textbf{Howe-Piatetski-Shapiro type}, $\psi=\chi\boxtimes S_2 \oplus \mu\boxtimes S_2, \chi\ne \mu, \chi^2=\mu^2=\mbf{1}$. For almost all $v$, the Satake parameter for $\pi_{\mbf{1}_v}$ is given by
$$\left( \begin{array}{cccc}
\chi_v(\varpi_v)|\varpi_v|^{1/2} & & &\\
 & \mu_v(\varpi_v)|\varpi_v|^{1/2} & &\\
 & & \mu_v^{-1}(\varpi_v)|\varpi_v|^{-1/2}&\\
&&& \chi_v(\varpi_v)|\varpi_v|^{-1/2}
\end{array} \right).$$
Therefore for these places $\pi_{\mbf{1}_v}$ is isomorphic to $J_{\wt{B}}(\alpha_3,\wt{\chi}_v\boxtimes \wt{\mu}_v)=J_\text{ETF}(1/2,\wt{\chi}_v\boxtimes \sigma_v)$. The equality of the latter two holds because we can assume $\sigma_v$ is the even Weil representation without loss of generality. In view of this, the residual spectra $J_{\wt{B}}(\alpha_3,\wt{\chi}\boxtimes \wt{\mu})$ in Theorem \ref{thm: borel} and $J_\text{ETF}(1/2,\wt{\chi}\boxtimes \sigma)$ in Theorem \ref{thm: non-siegel} both belong to the near equivalence class for a parameter of Howe-Piatetski-Shapiro type.\\

E) \textbf{Principal type}, $\psi=\chi\boxtimes S_4, \chi^2=\mbf{1}$. For almost all $v$, the Satake parameter for $\pi_{\mbf{1}_v}$ is given by
$$\left( \begin{array}{cccc}
\chi_v(\varpi_v)|\varpi_v|^{3/2} & & &\\
 & \chi_v(\varpi_v)|\varpi_v|^{1/2} & &\\
 & & \chi_v(\varpi_v)|\varpi_v|^{-1/2}&\\
&&& \chi_v(\varpi_v)|\varpi_v|^{-3/2}
\end{array} \right).$$
For such places, we know that $\pi_{\mbf{1}_v}$ is isomorphic to $J_\text{ETF}(3/2,\wt{\chi}_v\boxtimes \sigma_v)=J_{\wt{B}}(\alpha_1/2+\alpha_3,\wt{\chi}_v\boxtimes\wt{\mu}_v)$. The equality is due to same reason as in D). Therefore, the residual spectra $J_\text{ETF}(3/2,\wt{\chi}\boxtimes \sigma)$ in Theorem \ref{thm: non-siegel} and $J_{\wt{B}}(\alpha_1/2+\alpha_3,\wt{\chi}\boxtimes\wt{\mu})$ in Theorem \ref{thm: borel} both belong to the near equivalence class associated to $\psi$ of principal type. In fact, we know they are both nearly equivalent to the $\Mp_4$ analog of elementary theta functions ETF introduced before which occur in the discrete spectrum of $\Mp_2$.

\end{document}